\newcounter{theorem}
\newtheorem{theorem}[theorem]{Theorem}
\newtheorem{lemma}[theorem]{Lemma}
\newtheorem{prop}[theorem]{Proposition}
\newtheorem{cor}[theorem]{Corollary}
\theoremstyle{definition}
\newtheorem{defn}[theorem]{Definition}
\newtheorem{notation}[theorem]{Notation}
\theoremstyle{remark}
\newtheorem*{remark*}{Remark}
\newtheorem{rmk}[theorem]{Remark}
\newtheorem{example}[theorem]{Example}
\numberwithin{equation}{section}
\crefname{theorem}{Theorem}{Theorems}
\crefname{lemma}{Lemma}{Lemmas}
\crefname{prop}{Proposition}{Propositions}
\crefname{cor}{Corollary}{Corollaries}
\crefname{defn}{Definition}{Definitions}
\crefname{notation}{Notation}{Notations}
\crefname{rmk}{Remark}{Remarks}
\crefname{example}{Example}{Examples}
\crefname{equation}{}{}
\newcommand{\KK}{\mathrm{KK}}
\newcommand{\K}{\mathrm{K}}
\newcommand{\asy}{\mathrm{asymp}}
\newcommand{\Bim}{\mathrm{Bim}}
\newcommand{\End}{\operatorname{End}}
\newcommand{\Ad}{\mathrm{Ad}}
\newcommand{\Hilb}{\mathrm{Hilb}}
\newcommand{\Irr}{\mathrm{Irr}}
\newcommand{\Hom}{\mathrm{Hom}}
\newcommand{\spa}{\mathrm{span}}
\newcommand{\au}{\approx_{\mathrm{u}}}
\newcommand{\id}{\mathrm{id}}
\newcommand{\op}{\mathrm{op}}
\newcommand{\ev}{\mathrm{ev}}
\newcommand{\bC}{\mathbb{C}}
\newcommand{\bE}{\mathbb{E}}
\newcommand{\bK}{\mathbb{K}}
\newcommand{\bN}{\mathbb{N}}
\newcommand{\bU}{\mathbb{U}}
\newcommand{\bZ}{\mathbb{Z}}
\newcommand{\bu}{\mathbbm{u}}
\newcommand{\bv}{\mathbbm{v}}
\newcommand{\bw}{\mathbbm{w}}
\newcommand{\cA}{\mathcal{A}}
\newcommand{\cB}{\mathcal{B}}
\newcommand{\cC}{\mathcal{C}}
\newcommand{\cH}{\mathcal{H}}
\newcommand{\cJ}{\mathcal{J}}
\newcommand{\cK}{\mathcal{K}}
\newcommand{\cL}{\mathcal{L}}
\newcommand{\cF}{\mathcal{F}}
\newcommand{\cG}{\mathcal{G}}
\newcommand{\cM}{\mathcal{M}}
\newcommand{\cU}{\mathcal{U}}
\newcommand{\cO}{\mathcal{O}}
\newcommand{\fC}{\mathfrak{C}}
\newcommand{\fD}{\mathfrak{D}}
\newcommand{\fu}{\mathfrak{u}}
\newcommand{\fv}{\mathfrak{v}}
\newcommand{\rs}{\mathrm{s}}
\newcommand{\fw}{\mathfrak{w}}
\newcommand{\bfE}{\mathbf{E}}
\newcommand{\nn}[1]{\textcolor{red}{[[#1]]}}
\title[stable uniqueness for KK$^{\mathcal{C}}$]{The stable uniqueness theorem for unitary tensor category equivariant KK-theory}
\author{Sergio Girón Pacheco}
\address{\hskip-\parindent Sergio Girón Pacheco, Department of mathematics, KU Leuven, Celestijnenlaan 200B, 3001, Leuven, Belgium.}
\email{sergio.gironpacheco@kuleuven.be}
\author{Kan Kitamura}
\address{\hskip-\parindent Kan Kitamura, RIKEN iTHEMS, 2-1 Hirosawa, Wako Saitama, 351-0198, Japan.}
\email{kan.kitamura@riken.jp}
\author{Robert Neagu}
\address{\hskip-\parindent Robert Neagu, Department of mathematics, KU Leuven, Celestijnenlaan 200B, 3001, Leuven, Belgium.}
\email{robert.neagu@kuleuven.be}
\thanks{Funded by the European Union. Views and opinions expressed are however those of the authors only and do not necessarily reflect those of the European Union or the European Research Council. Neither the EU nor the ERC can be held responsible for them. For the purpose of open access, the authors have applied a CC BY license to any author accepted manuscript arising from this submission.}
\begin{document}

\begin{abstract}
We introduce the Cuntz–Thomsen picture of $\cC$-equivariant Kasparov theory, denoted $\KK^{\cC}$, for a unitary tensor category $\cC$ with countably many isomorphism classes of simple objects. We use this description of $\KK^{\cC}$ to prove the stable uniqueness theorem in this setting.
\end{abstract}

\maketitle

\setcounter{tocdepth}{1}
\tableofcontents

\numberwithin{theorem}{section}	

\section*{Introduction}
\renewcommand*{\thetheorem}{\Alph{theorem}}

The study of K-theory, K-homology, and their interactions has proven to be of fundamental importance in the development of C$^*$-algebra theory. Notably, these ideas were exploited early in the work of Atiyah and Singer (\cite{ATSI68}), and further developed by Brown, Douglas, and Fillmore (\cite{BDF1,BDF2}). Furthermore, K-theory has proven to be an essential ingredient in the classification programme for C$^*$-algebras, particularly since Elliott's classification of AF-algebras (\cite{elliott}). In a tour de force (\cite{KA88}), Kasparov introduced $\KK$-theory, a bivariant framework that unifies and extends $\K$-theory and $\K$-homology. From its inception, $\KK$-theory was formulated in the general setting of C$^*$-dynamical systems, i.e. C$^*$-algebras carrying a group action, a level of generality crucial for Kasparov's applications to the Novikov conjecture. 

The first major application of $\KK$-theory in C$^*$-classification is the Kirchberg--Phillips classification (\cite{Kirch00,phillipsclass,kirchbergclass}), establishing that a certain class of C$^*$-algebras exhibits a form of homotopy rigidity with respect to the $\KK$-bifunctor. In subsequent approaches to the Kirchberg--Phillips classification theorem (\cite{DADEIL02,oinftyclass}), an existence and uniqueness strategy is employed (see \cite{DADEIL02} for a discussion on such a strategy). A crucial component of this strategy, is showing that two $^*$-homomorphisms between Kirchberg algebras that induce the same map under the $\KK$-functor are asymptotically unitarily equivalent. Uniqueness theorems for $\KK$-theory of this type have been central to following breakthrough classification results (\cite{oinftyclass,CS20,classif}).

One of the features that makes $\KK$-theory widely applicable is the existence of many equivalent formulations, each suited for a different purpose. One such description of $\KK$-theory is due to Cuntz (\cite{Cu83,Cu84}), with a group equivariant counterpart due to Thomsen (\cite{Th98}). In Cuntz's picture, elements of the Kasparov group $\KK(A,B)$ are realised as homotopy classes of pairs of $^*$-homomorphisms $\phi,\psi\colon A \to \cM(B \otimes \mathbb{K})$ such that $\phi(a)-\psi(a)\in B\otimes \bK$ for all $a \in A$. Such a pair $(\phi,\psi)$ is commonly called a Cuntz pair. One prominent theorem which requires this picture is the so-called \emph{stable uniqueness theorem} of Dadarlat-Eilers (\cite{DADEIL01}) and Lin (\cite{Lin02}), which roughly states that if a Cuntz pair $(\phi,\psi)$ induces the zero element in $\KK(A,B)$, then after stabilising  both $\phi$ and $\psi$ with another $^*$-homomorphism, they become homotopic via a well-behaved unitary path. In the past decade, this theorem was not only a key tool to a more recent proof of the Kirchberg--Phillips theorem (\cite{oinftyclass}), but it has also played a crucial role in various other groundbreaking results such as \cite{TWW,CS20,classif}. Moreover, the group equivariant version of this theorem proved by Gabe and Szabó in \cite{GASZ22}, was used by the same authors in their breakthrough classification of group actions on Kirchberg algebras (\cite{DynamicalKP}).

Due to the great success of Jones' subfactor theory (\cite{jonessubfactors}), and in particular Popa's seminal classification results for amenable subfactors (\cite{Popa94}), there has been growing interest in studying quantum symmetries on amenable C$^*$-algebras. A quantum symmetry on a C$^*$-algebra is the action of a unitary tensor category $\cC$ on $A$, or more precisely, a C$^*$-tensor functor from $\cC$ into the category of bimodules $\Bim(A)$. Following the recent introduction of the unitary tensor category equivariant $\KK$-theory (\cite{ARKIKU23}), denoted $\KK^{\cC}$, it is natural to examine to what extent $\KK^{\cC}$ may be suitable for classification purposes. In an effort to shed light on $\cC$-equivariant $\KK$-uniqueness, we prove the $\cC$-equivariant version of the stable uniqueness theorem.

\par To do this, we start by introducing a Cuntz-Thomsen formulation of $\KK^{\cC}$, showing its equivalence to the construction in \cite{ARKIKU23} (see Theorem \ref{thm:Cuntzpicture}).\ As in \cite{Th98}, to introduce this picture of $\KK^{\cC}((\alpha,\fu),(\beta,\fv))$ for $\cC$-C$^*$-algebras $(A,\alpha,\fu)$ and $(B,\beta,\fv)$, it is important to consider the right notion of a $\cC$-equivariant representation from $A$ to $B$, often called a cocycle representation. In our setting, this coincides with that of a $\cC$-equivariant structure on a correspondence isomorphic to $B$ as a right module. A crucial observation that underpins most of the results in this paper is that such a $\cC$-equivariant representation can be repackaged as a family of compatible linear maps indexed by the elements in the category. More precisely, as shown in Proposition \ref{prop:phiXmaps}, for $\cC$-C$^*$-algebras $(A,\alpha,\fu)$ and $(B,\beta,\fv)$, a $\cC$-cocycle representation $\phi\colon (A,\alpha,\fu)\to (B,\beta,\fv)$ consists of a family of linear maps $\{\phi_X\colon\alpha(X)\to\cL(B,\beta(X))\}_{X\in\cC}$ satisfying certain compatibility conditions. With this viewpoint in hand, we can state a simplified version of the $\cC$-equivariant stable uniqueness theorem.

\begin{theorem}[Theorem \ref{thm: StableUniq}]\label{thm:A}
Let $\cC$ be a unitary tensor category with countably many isomorphism classes of simple objects, $(A,\alpha,\fu)$ be a separable $\cC$-C$^*$-algebra, and $(B,\beta,\fv)$ be a $\sigma$-unital $\cC$-C$^*$-algebra. Let \[\phi,\psi\colon(A,\alpha,\fu)\to (B\otimes\bK,\beta\otimes\id_{\bK},\fv\otimes 1)\] be two $\cC$-cocycle representations that form a $\cC$-Cuntz pair. Then the pair $(\phi,\psi)$ represents the zero element in $\KK^{\cC}((\alpha,\fu),(\beta,\fv))$ if and only if there exists another $\cC$-cocycle representation $\theta$ and a norm-continuous path of unitaries $u\colon[0,\infty)\to\cU(1+B\otimes\bK)$ with $u_0=1$ such that for all $X\in\cC$ and $\xi\in\alpha(X)$ \[(\psi_X\oplus\theta_X)(\xi)=\lim\limits_{t\to\infty}u_t\rhd (\phi_X\oplus\theta_X)(\xi)\lhd u_t^*.\footnote{We denote by $\rhd$ and $\lhd$ the actions of $\cM(B)$ on the $\cM(B)$-$\cM(B)$-correspondence $\cL(B,\beta(X))$. The direct sum above is also understood in this sense by taking isometries forming a copy of $\cO_2$ in $\cM(\bK)$. The notation of this theorem is carefully explained in Sections \ref{sect: Prelim} and \ref{sect:Cuntzpicture}.}\] 
\end{theorem}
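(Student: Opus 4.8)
The plan is to prove the two directions separately; the \enquote{only if} direction carries essentially all of the content. Throughout I would work with the reformulation of $\cC$-cocycle representations as compatible families of linear maps $\{\phi_X\colon\alpha(X)\to\cL(B,\beta(X))\}_{X\in\cC}$ provided by Proposition \ref{prop:phiXmaps}; this is the device that lets one import the classical intertwining and absorption machinery while keeping the monoidal data $\fu,\fv$ under control.

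For the \enquote{if} direction, suppose $\theta$ and $u$ are as in the statement. For each $t\in[0,\infty)$ the family $\xi\mapsto u_t\rhd(\phi_X\oplus\theta_X)(\xi)\lhd u_t^*$ defines a $\cC$-cocycle representation (conjugation by the unitary $u_t\in\cM(B\otimes\bK)$ preserves all the structure), and since $u_t-1\in B\otimes\bK$ and $\phi_X-\psi_X$ takes values in $\cK(B,\beta(X))$, it forms a $\cC$-Cuntz pair with $\psi\oplus\theta$. As $t$ varies this is a norm-continuous path of $\cC$-Cuntz pairs from $(\phi\oplus\theta,\psi\oplus\theta)$ to the degenerate pair $(\psi\oplus\theta,\psi\oplus\theta)$ in the limit, so $[(\phi\oplus\theta,\psi\oplus\theta)]=0$ in $\KK^{\cC}$ by Theorem \ref{thm:Cuntzpicture}. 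Additivity of the Cuntz--Thomsen picture together with $[(\theta,\theta)]=0$ (a diagonal pair) then gives $[(\phi,\psi)]=[(\phi\oplus\theta,\psi\oplus\theta)]=0$.

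For the \enquote{only if} direction, assume $[(\phi,\psi)]=0$. By Theorem \ref{thm:Cuntzpicture}, $(\phi,\psi)$ is homotopic, through $\cC$-Cuntz pairs with values in $\cM(C([0,1],B)\otimes\bK)$, to a degenerate pair; write $(\Phi_s,\Psi_s)_{s\in[0,1]}$ for such a homotopy, so $(\Phi_0,\Psi_0)=(\phi,\psi)$ and $\Phi_1=\Psi_1$. The next ingredient is a $\cC$-equivariant absorption theorem (the analogue of Thomsen's, and of the group-equivariant statement used in \cite{GASZ22}): there is a $\cC$-cocycle representation $\rho$ that is \emph{strongly absorbing}, meaning that for every $\cC$-cocycle representation $\sigma$ in a suitable class — in particular every degenerate one, as well as the representations appearing in the homotopy above — the sum $\rho\oplus\sigma$ is approximately unitarily equivalent to $\rho$ along a norm-continuous path of unitaries in $1+B\otimes\bK$ starting at $1$. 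One then sets $\theta$ to be an infinite repeat of $\rho$, so that $\Phi_s\oplus\theta$ and $\Psi_s\oplus\theta$ are absorbing for every $s$ and agree at $s=1$. In the $\{\phi_X\}$-picture the proof of absorption is a Kasparov/Voiculescu-type construction: enumerate the countably many simple objects of $\cC$ and a dense sequence in $A$, and add orthogonal copies of an ample piece while checking at each stage that the added data is compatible with the cocycles $\fu,\fv$.

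Finally, the unitary path $u$ is produced by a one-sided approximate intertwining run along the homotopy parameter $s$. Fixing a finite set of simple objects and a finite set of test vectors $\xi\in\alpha(X)$, one chooses a fine partition $0=s_0<\dots<s_n=1$ and, inductively, unitaries in $1+B\otimes\bK$ that conjugate $\Phi_{s_k}\oplus\theta$ to within $\varepsilon_k$ of $\Phi_{s_{k+1}}\oplus\theta$ on the test set (possible because consecutive terms differ by something absorbable into $\theta$), and similarly for $\Psi$; splicing these, using $u_0=1$ and the coincidence $\Phi_1\oplus\theta=\Psi_1\oplus\theta$, yields a norm-continuous path with $u_t\rhd(\phi_X\oplus\theta_X)(\xi)\lhd u_t^*\to(\psi_X\oplus\theta_X)(\xi)$ on the chosen finite data, and a diagonal argument over the countably many simples and a countable dense set of test vectors upgrades this to all $X$ and $\xi$. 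I expect the main obstacle to be the equivariant absorption theorem itself: one must simultaneously arrange absorption in every degree $X\in\cC$ while keeping the pieces genuinely intertwined by a $\cC$-cocycle, show the implementing equivalences can be chosen in the small unitary group $1+B\otimes\bK$, and assemble them continuously — the compatibility constraints among the $\phi_X$ coming from the tensor structure and the cocycles $\fu,\fv$ are what make both the construction of $\rho$ and the continuity of the final path delicate.
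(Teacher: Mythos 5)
Your ``if'' direction is essentially the paper's argument (reparametrise the conjugating path into a homotopy of $\cC$-Cuntz pairs ending at a degenerate pair, using that $u_t\in\cU(1+B\otimes\bK)\subset\fD_{\phi\oplus\theta}$, then appeal to \cref{thm:Cuntzpicture} and additivity), and it is correct. The ``only if'' direction, however, departs from the paper and has two genuine gaps.

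First, you posit a ``strongly absorbing'' representation $\rho$ whose absorption $\rho\oplus\sigma\sim\rho$ is witnessed by norm-continuous paths of unitaries in $1+B\otimes\bK$ starting at $1$. No such statement is proved in the paper, and it is far stronger than what the equivariant absorption theorem (\cref{thm:absorbingexists}) actually gives: there the equivalence $\theta\oplus\phi\sim_{\cC}\theta$ is implemented by unitaries in $\cM(B^\rs)$, not in the unitisation, and certainly not by a path anchored at $1$. Getting the final path into $\cU(1+B\otimes\bK)$ is precisely the hard analytic content of the theorem, and in the paper it is achieved not through absorption but through the quasicentral approximate unit argument of Section~\ref{sect: AsymptEquiv} (\cref{lemma: ChangeUnitPath1,lemma: ChangeUnitPath2}), applied to a \emph{global} operator homotopy. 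Second, your discretise-and-conjugate step along the homotopy parameter is unsupported: knowing that $\Phi_{s_k}$ and $\Phi_{s_{k+1}}$ are close in point-norm on a finite test set does not by itself produce a unitary in $1+B\otimes\bK$ approximately conjugating $\Phi_{s_k}\oplus\theta$ to $\Phi_{s_{k+1}}\oplus\theta$ (that is itself a local uniqueness statement of the same depth as the theorem), and splicing the resulting unitaries into a single continuous path while preserving the approximate intertwining is not addressed. The paper avoids both issues by first converting $[\phi,\psi]=0$ into a stable $\cC$-operator homotopy via the $\cC$-equivariant Kasparov technical theorem (\cref{lem:KKarantine}, using the Thomsen-style stabiliser $\kappa=\theta^B\circ\Psi\oplus\theta^B_0\circ\Phi$ built from the homotopy, not an absorbing representation), and only then localising the unitaries; absorption enters only for the refinement that $\theta$ may be taken to be an arbitrary absorbing representation. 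To repair your argument you would need to either prove the strong absorption statement you assume (unlikely to hold in this generality) or replace the discretisation by the operator-homotopy route.
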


Due to the fact that many unitary tensor categories do not admit actions on the complex numbers, and the actions we consider are by correspondences rather than endomorphisms, one cannot directly employ the machinery developed in the group equivariant setting (\cite{GASZ22}). To circumvent these issues, we need to introduce the right counterparts for notions such as Cuntz pairs, weak containment, or asymptotic unitary equivalence, which do not restrict to the analogous notions in \cite{GASZ22}. Indeed, the categorical notion of equivariant morphism between actions of tensor categories leads us to handle the underlying $^*$-homomorphism and unitary cocycles into a combined map, rather than separate compatible objects.
This leads to different but appropriate notions in our setting.
Thus, the reader should be warned that our stable uniqueness theorem differs to the one in \cite{GASZ22}, even in the group equivariant case. Futhermore, a fundamental difference in our proofs is the use of duality in a unitary tensor category to replace the invertibility in a group. We set up the necessary framework to make use of duality in Section \ref{sect: Prelim}.
\par An important milestone in the proof of Theorem \ref{thm: StableUniq} is showing the existence of absorbing representations in the $\cC$-equivariant setting (see Theorem \ref{thm:absorbingexists}). In the past decade, through a result of Elliott and Kucerovsky (\cite{EllKu01}), verifying when a representation is absorbing proved to be a key step in major results regarding structure and classification of C$^*$-algebras (\cite{ChrisTWW,CS20,oinftyclass,classif}). Therefore, an appealing result that we would like to point out is Lemma \ref{lem:biglemsec3}, which contains sufficient conditions to determine when a given $\cC$-cocycle representation absorbs another $\cC$-cocycle representation. This result is in the spirit of (\cite{Ka80,GASZ22}) and Voiculescu's theorem (\cite{Voi76}). As absorbing representations exist, we may get control over the structure of the stabilisation $\theta$ in Theorem \ref{thm:A} (in fact any absorbing representation will suffice). This enhancement of the stable uniqueness theorem is crucial for its applications in \cite{DynamicalKP}.

\subsection*{Acknowledgements}
We would like to thank Gabór Szabó for useful discussions on the topic of this paper. This work started during the authors' stay at the International Centre for Mathematical Sciences (ICMS) for the 'Twinned Conference on C$^*$-Algebras and Tensor Categories' in November 2023. We thank ICMS and the organisers for the hospitality.

\par SGP was supported by projects G085020N and 1249225N funded by the Research Foundation Flanders (FWO). KK was partly supported by JSPS KAKENHI Grant Number JP21J21283 and RIKEN Special Postdoctoral Researcher Program.
RN was supported by the EPSRC grant EP/R513295/1, by the European Research Council under the European Union's Horizon Europe research and innovation programme (ERC grant AMEN-101124789), and by the postdoctoral fellowship 1204626N of the Research Foundation Flanders (FWO).
\allowdisplaybreaks

\section{Preliminaries}\label{sect: Prelim}
\numberwithin{theorem}{section}

We denote by $\bK$ the C$^*$-algebra of compact operators on an infinite dimensional separable Hilbert space. For a C$^*$-algebra $A$ we denote its stabilization by $A^\rs=A\otimes\bK$. For a Banach space $B$, we write $B_1$ for its unit ball and for another Banach space $E$ we write $\cB(E,B)$ for the Banach space of bounded linear operators from $E$ to $B$. For $x,y\in B$ and $\varepsilon>0$, we denote $x=_\varepsilon y$ if $\|x-y\|\leq \varepsilon$. For a closed subspace $J\subset B$, we say $x\equiv y$ mod $J$ if $x-y\in J$ and $x\equiv_{\varepsilon} y$ mod $J$ if $x-y\in J$ and $\|x-y\|\leq \varepsilon$.
\subsection{Hilbert modules}
\par We assume standard facts about Hilbert modules and C$^*$-corresponde\-nces as can be found in \cite{Hilbertmodules}. We will often simply write correspondence to mean C$^*$-correspondence. In our case, Hilbert modules will have inner products on the right. If $E$ and $H$ are Hilbert $B$-modules, we denote by $\cL_B(E,H)$ the Banach space of adjointable $B$-module maps between $E$ and $H$ and by $\cK_B(E,H)$ the compact operators from $E$ to $H$, i.e., the norm closure of the finite rank operators in $\cL_B(E,H)$.
If $B$ is clear from the context, we simply write $\cL(E,H)$ and $\cK(E,H)$. Recall that $\cL(E,E)$, which we simply denote by $\cL(E)$, is a C$^*$-algebra. If $E$ and $H$ are in fact $A$-$B$-correspondences, then we denote by $\Hom(E,H)$ those operators in $\cL(E,H)$ that also commute with the left $A$-action and denote $\Hom(E,E)$ by $\End(E)$. For an $A_1$-$B_1$-correspondence $E_1$ and an $A_2$-$B_2$-correspondence $E_2$, we denote their external tensor product by $E_1\boxtimes E_2$. Moreover, if $B_1=A_2$, we denote their internal tensor product by $E_1\otimes_{A_2} E_2$ (often omitting the subscript). We denote by $E_1^{s}=E_1\boxtimes \bK$. To emphasise the left and right actions on an $A$-$B$-correspondence $E$, we often denote these by $a\rhd \xi$ and $\xi\lhd b$ for $a\in A$, $b\in B$ and $\xi\in E$.
\par We say an $A$-$B$-correspondence $E$ is \emph{non-degenerate} if $\overline{A\rhd E}=A\rhd E=E$ (the first equality is by virtue of Cohen factorisation). For any non-degenerate $A$-$B$-correspondence $E$, we denote by $l_E\in \Hom(E,A\otimes E)$ and $r_E\in \Hom(E,E\otimes B)$ the canonical natural unitary isomorphisms given by using the approximate units of $A$ and $B$, respectively (see the discussion above \cite[Lemma~1.11]{intertwining}).\footnote{Here we are viewing $A$ and $B$ as C$^*$-correspondences over themselves} We often omit the maps $l_E$ and $r_E$ from notation when they are implied. We call an $A$-$B$-correspondence $E$ \emph{proper} if the left action consists of compact operators on $E$. We denote by $\Bim(B)$ the C$^*$-tensor category of non-degenerate C$^*$-correpondences over $B$ under the internal tensor product.
%\par For a Hilbert $B$-module $E$, we sometimes write $\langle E|\coloneqq \cK(E,B)$, which is a $B$-$\cK(E)$-correspondence and $|E\rangle\coloneqq E$ \nn{Do we actually use this notation? SG}\nn{I searched all `$\rangle$' and `$\langle$' to see there is no occurrence of $|Hibert\ module\rangle$ nor $\langle Hibert\ module|$}. 
%Similarly, we denote by $\langle\xi|\in\cK(E,B)$ for $\xi\in E$ the operator given by $\langle \xi| \eta=\langle \xi,\eta\rangle$ for $\eta\in E$ and so for another Hilbert $B$-module $E'$ and $\zeta\in E'$ we denote by $|\zeta\rangle \langle \xi|\in \cK(E,E')$ the rank one operator defined by $|\zeta\rangle \langle \xi|(\eta)=\zeta\lhd \langle \xi,\eta\rangle$. \nn{Also, I see the only occurrence of this notation was in the proof of \cref{lem:compactstensoridentity}.}

\par For an $A$-$B$-correspondence $(E,\phi)$, with $\phi\colon A\rightarrow \cL(E)$ the left action, and a Hilbert $A$-module $E'$ with $\xi\in E'$, we write 
\begin{align*}
T_\xi\colon &E\mapsto E'\otimes_A E, \\
& \eta\mapsto \xi\otimes \eta.
\end{align*}
Note that $T_{\xi}\in \cL(E,E'\otimes E)$ with adjoint given by $T_{\xi}^*(\zeta\otimes\eta)=\phi(\langle \xi,\zeta\rangle)\eta$. We sometimes write $T_\xi^\phi$ instead of $T_\xi$ to emphasise the dependence on $\phi$. We collect the following result about proper correspondences.

\begin{lemma}[cf.~{\cite[Proposition 4.7]{Hilbertmodules}}]\label{lem:compactstensoridentity}
    Let $E$ and $E'$ be Hilbert $A$-modules, $M$ be a proper $A$-$B$-correspondence and $T\in\cL(E,E')$. Then if $T\in \cK(E,E')$ then $T\otimes \id_{M}\in \cK(E\otimes M,E'\otimes M)$. Thus, if $M$ is a Morita equivalence between $A$ and $B$ (i.e. $M$ is invertible under the internal tensor product), then $T\otimes \id_{M}\in \cK(E\otimes M,E'\otimes M)$ implies that $T\in \cK(E,E')$. 
\end{lemma}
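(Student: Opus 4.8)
The plan is to reduce the forward implication to rank-one operators, where it boils down to the single identity $T_\xi^* T_\xi=\phi(\langle\xi,\xi\rangle)$ (with $\phi\colon A\to\cL(M)$ the left action of $M$), and then to obtain the converse by tensoring once more with the inverse correspondence of $M$. For the forward direction, since $T\mapsto T\otimes\id_M$ is a contractive linear map $\cL(E,E')\to\cL(E\otimes M,E'\otimes M)$ and the finite-rank operators are dense in $\cK(E,E')$, it suffices to show $\theta_{\eta,\xi}\otimes\id_M\in\cK(E\otimes M,E'\otimes M)$ for a rank-one operator $\theta_{\eta,\xi}\colon\zeta\mapsto\eta\lhd\langle\xi,\zeta\rangle$ with $\xi\in E$ and $\eta\in E'$. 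A short computation on elementary tensors, using the relation $\eta\otimes(a\rhd m)=(\eta\lhd a)\otimes m$ in the internal tensor product, gives $\theta_{\eta,\xi}\otimes\id_M=T_\eta T_\xi^*$, where $T_\xi\in\cL(M,E\otimes M)$ and $T_\eta\in\cL(M,E'\otimes M)$ are the maps introduced above.

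The crucial point is that $T_\xi^* T_\xi=\phi(\langle\xi,\xi\rangle)$ lies in $\cK(M)$ exactly because $M$ is proper. Using the standard fact that an adjointable operator $S$ between Hilbert modules with $S^*S$ compact is itself compact — indeed $S(S^*S)^{1/n}\to S$ in norm and $(S^*S)^{1/n}\in\cK$ — we conclude $T_\xi\in\cK(M,E\otimes M)$ and similarly $T_\eta\in\cK(M,E'\otimes M)$. Hence $\theta_{\eta,\xi}\otimes\id_M=T_\eta T_\xi^*$ is a product of compact operators, so it lies in $\cK(E\otimes M,E'\otimes M)$; passing to the closed linear span gives the forward implication.

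For the converse, let $N$ be the inverse of $M$ under the internal tensor product, a $B$-$A$-correspondence with $M\otimes_B N\cong A$ and $N\otimes_A M\cong B$ as correspondences. As a Morita equivalence, $N$ is again proper (its left action identifies $B$ with $\cK(N)$), so the forward implication applies with $N$ in place of $M$: from $T\otimes\id_M\in\cK(E\otimes M,E'\otimes M)$ we obtain $(T\otimes\id_M)\otimes\id_N\in\cK\big((E\otimes M)\otimes N,\,(E'\otimes M)\otimes N\big)$. Under the canonical unitaries $(E\otimes_A M)\otimes_B N\cong E\otimes_A(M\otimes_B N)\cong E\otimes_A A\cong E$ — associativity of the internal tensor product, the isomorphism $M\otimes_B N\cong A$ of $A$-$A$-correspondences, and non-degeneracy of $E$ as a right module — together with the analogous unitaries for $E'$, the operator $(T\otimes\id_M)\otimes\id_N$ is carried to $T$. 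Since conjugating by unitaries preserves compactness, $T\in\cK(E,E')$, as desired.

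The argument is essentially bookkeeping with internal tensor products and their canonical isomorphisms; the only genuine ingredient is the identity $T_\xi^* T_\xi=\phi(\langle\xi,\xi\rangle)$ combined with properness of $M$, which I expect to be the step doing the real work. The subtlety worth keeping in mind is that the converse needs the inverse correspondence $N$ to be proper as well — automatic for a Morita equivalence, but false for a general proper $M$ — in order to invoke the forward implication a second time.
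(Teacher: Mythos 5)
Your proof is correct and follows essentially the same route as the paper's: reduce to rank-one operators, factor $\theta_{\eta,\xi}\otimes\id_M$ through $T_\xi^*$ and use properness of $M$, then handle the converse by tensoring with the inverse correspondence and transporting along the canonical unitaries. The only (harmless) variation is in the middle step: the paper uses Cohen factorisation to write $\zeta=\zeta'\lhd a$ and inserts the compact $\varphi(a)$ via the ideal property, whereas you show directly that $T_\xi$ is compact from $T_\xi^*T_\xi=\phi(\langle\xi,\xi\rangle)\in\cK(M)$ — both arguments are valid.
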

\begin{comment}
    As any compact operator is in the closure of the span of rank one operators it suffices to show that, for any rank one operator $|\zeta\rangle\langle\xi|$ for $\zeta\in E'$ and $\xi\in E$, $|\zeta\rangle\langle \xi|\otimes\id_M\in \cK(E\otimes M,E'\otimes M)$. Moreover, as $E'$ has a right $A$-valued inner product it is non-degenerate as a right Hilbert $A$-module (see \cite[Proposition 2.16]{KWP04}) and hence by Cohen factorisation we may assume that $\zeta=\zeta'\lhd a$ for some $\zeta'\in E'$ and $a\in A$. Now $|\zeta\rangle\langle \xi|\otimes\id_M=|\zeta'\lhd a\rangle\langle \xi|\otimes \id_M=T_{\zeta'}\circ \varphi(a)\circ T_{\xi}^*\in \cK(E\otimes M,E'\otimes M)$ by the ideal property of compact operators, where $\varphi\colon A\rightarrow \cK(M)$ denotes the left action on $M$.
    \par If $M$ is invertible under the internal tensor product let $M^{-1}$ be a $B$-$A$-correspondence such that $M\otimes M^{-1}\cong A$.  If $T\in \cL(E\otimes M,E'\otimes M)$ is such that $T\otimes \id_M\in \cK(E\otimes M,E'\otimes M)$ we may apply the result proven in the previous paragraph to imply that $T\otimes\id_M\otimes \id_{M^{-1}}\in \cK(E\otimes M\otimes M^{-1},E'\otimes M\otimes M^{-1})$. By pre and poscomposing by the isomorphism between $M\otimes M^{-1}$ and $A$ we have that $T\in \cK(E,E')$.
\end{comment}
\par For a family of $A$-$B$-correspondences $(E_\lambda)_\lambda$, we write $\bigoplus_\lambda E_\lambda$ for the $\ell^2$-completion of their direct sum. 
In particular, we write $E^{\infty}$ for the $\ell^2$-completion of direct sum of countably infinitely many copies of a correspondence $E$. 
We note the following technical lemma, which is a special case of \cite[Lemma 4.3]{MEY00}, to be used in the proof of Theorem \ref{thm:Cuntzpicture}. 
%For a separable right Hilbert $A$-module $E$ we canonically identify its external tensor product $\bK\boxtimes E$ with its infinite $l^2$ direct sum $E^{\infty}$ throughout this paper. \nn{Is this disclaimer valid?}
%Let $(E,\phi,\bu)$ be a $\cC$-$(A,B)$-correspondence. For $X\in\cC$, we put the well-defined $^*$-homomorphism $\phi_X\colon \cK(\alpha(X))\to \cL(E\otimes_B\beta(X))$ defined by $\phi_X(|\xi\ket\bra\eta|)\coloneqq (\bu_X T_\xi^{\phi})(\bu_X T_\eta^{\phi})^*$ for $\xi,\eta\in\alpha(X)$, that the image of $\phi_X$ is adjointable follows from \cite[Lemma 2.31]{ARKIKU23}. 

\begin{lemma}[cf.~{\cite[Lemma 4.3]{MEY00}}]\label{lem:strictpathconnisomet}
Let $B$ be a $\sigma$-unital C$^*$-algebra and $E$ be a countably generated Hilbert $B$-module. For any isometries $S_+,S_-\in \cL(E,B^{\infty})$, by viewing both $E\boxtimes C[0,1]$ and $B^{\infty}\boxtimes C[0,1]$ as Hilbert $B\otimes C[0,1]$-modules, there is an isometry $(S_t)_{t\in[0,1]}\in \cL_{B\otimes C[0,1]}(E\boxtimes C[0,1],B^{\infty}\boxtimes C[0,1])$ whose evaluations at $t=0,1$ satisfy $S_0=S_+$ and $S_1=S_-$. 
\end{lemma}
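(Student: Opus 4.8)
The plan is to recognise the statement as (a mild variant of) \cite[Lemma 4.3]{MEY00} and to reduce it, via an elementary rotation homotopy, to a single Eilenberg-swindle fact about isometries of $B^{\infty}$.

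Using $B^{\infty}\cong B^{\infty}\oplus B^{\infty}$, fix $\sigma_0,\sigma_1\in\cL(B^{\infty})$ with $\sigma_i^{*}\sigma_j=\delta_{ij}\id_{B^{\infty}}$ and $\sigma_0\sigma_0^{*}+\sigma_1\sigma_1^{*}=1$. For any isometries $T_0,T_1\in\cL(E,B^{\infty})$ the path
\[
t\longmapsto \cos\big(\tfrac{\pi t}{2}\big)\sigma_0 T_0+\sin\big(\tfrac{\pi t}{2}\big)\sigma_1 T_1,\qquad t\in[0,1],
\]
is norm-continuous and consists of isometries: the cross terms vanish because $\sigma_0^{*}\sigma_1=0$, leaving $\cos^{2}(\tfrac{\pi t}{2})T_0^{*}T_0+\sin^{2}(\tfrac{\pi t}{2})T_1^{*}T_1=\id_E$; and it joins $\sigma_0T_0$ to $\sigma_1T_1$. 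Write $X\rightsquigarrow Y$ to mean that $X$ and $Y$ are connected by a strictly continuous path of isometries (equivalently, by an isometry in the relevant $\cL_{B\otimes C[0,1]}$). Then, granting $S_+\rightsquigarrow\sigma_0 S_+$ and $S_-\rightsquigarrow\sigma_1 S_-$, one concatenates
\[
S_+\ \rightsquigarrow\ \sigma_0 S_+\ \rightsquigarrow\ \sigma_1 S_-\ \rightsquigarrow\ S_-
\]
(the middle arrow is the rotation above with $T_0=S_+$, $T_1=S_-$; the last is the reverse of $S_-\rightsquigarrow\sigma_1 S_-$) and reparametrises to obtain the required $(S_t)_{t\in[0,1]}$ with $S_0=S_+$, $S_1=S_-$. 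Here one uses that left and right composition with a fixed adjointable operator, and concatenation, all preserve the property of defining an element of $\cL_{B\otimes C[0,1]}$.

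For an arbitrary isometry $S\in\cL(E,B^{\infty})$ and $\varepsilon\in\{0,1\}$, right composition with $S$ sends isometries of $\cL(B^{\infty})$ to isometries of $\cL(E,B^{\infty})$, since $(VS)^{*}VS=S^{*}V^{*}VS=S^{*}S=\id_E$; hence $S\rightsquigarrow\sigma_\varepsilon S$ would follow from $\id_{B^{\infty}}\rightsquigarrow\sigma_\varepsilon$ inside $\cL(B^{\infty})$. Now the range of $\sigma_\varepsilon$ has orthogonal complement $\operatorname{ran}(\sigma_{1-\varepsilon})\cong B^{\infty}$, and, up to left multiplication by a unitary of $\cL(B^{\infty})$, any isometry of $B^{\infty}$ with this property is the block shift on $\bigoplus_{n\ge 0}B^{\infty}$. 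So the remaining point is the Eilenberg-swindle fact that this block shift is strictly homotopic through isometries to $\id_{B^{\infty}}$ — which is the content of \cite[Lemma 4.3]{MEY00}; absorbing the conjugating unitary additionally uses that the unitary group of $\cL(B^{\infty})$ is strictly path connected (alternatively one quotes Meyer's lemma in a form that directly connects any two isometries into $B^{\infty}$, which bypasses these last reductions entirely).

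The hard part is exactly this last step, and the key subtlety is that it genuinely needs the strict, not the norm, topology: a norm-continuous path of isometries $\id_{B^{\infty}}\rightsquigarrow\sigma_0$ would produce a norm-continuous path of projections from $0$ to $1-\sigma_0\sigma_0^{*}=\sigma_1\sigma_1^{*}\neq0$, which is impossible since a projection has norm $0$ or $1$. Strictly, it works because the ``hole'' created by a shift can be pushed off to infinity, repeating the construction countably many times through $B^{\infty}\cong(B^{\infty})^{\oplus\infty}$; the delicate point is to organise these infinitely many rotations so that each intermediate operator is \emph{exactly} an isometry while the total family remains strictly continuous. The rotation homotopy, the calculations with $\sigma_0,\sigma_1$, and the manipulations of paths are all routine.
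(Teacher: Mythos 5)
Your proof is correct and takes essentially the same route as the paper's: the paper's own argument also inserts a norm-continuous rotation $\sqrt{1-t^{2}}\,(s_{+}\otimes 1_B)S_{+}+t\,(s_{-}\otimes 1_B)S_{-}$ between isometries with orthogonal ranges and reduces the two endpoint homotopies to the strict path-connectedness of isometries of $B^{\infty}$ to the identity, which it, like you, ultimately delegates to Meyer's Lemma 4.3. No gaps.
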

 %(To answer the question therein, $s_t\colon L^2[0,1]\ni f\mapsto [s\mapsto t^{-1}\mathbbm{1}_{[0,1]}(t^{-1}s)f(t^{-1}s) ]$ for $t\in[1/2,1]$ gives a strictly continuous path of isometries with $s_1=\id$ and $s_{1/2}$ being the isometry $L^2[0,1]\to L^2[0,1/2](\subset L^2[0,1])$. Similarly, we have a strictly continuous path between $\id$ and the isometry $L^2[0,1]\to L^2[1/2,1]$.) 

\begin{comment}
	Take isometries $s_\pm\in\cB(\ell^2(\bN))$ with $s_+^*s_-=0$. 
	There are strictly continuous paths of isometries between $s_\pm$ and $\id_{\ell^2(\bN)}$ \nn{Is there a reference for isometries in $\cB(\cH)$ being path connected in the strict topology? SG}. 
	Thus, there are strictly continuous paths of adjointable isometries between $S_\pm$ and $(s_\pm\otimes1_B)S_\pm$. 
	Finally, $\sqrt{1-t^2}(s_+\otimes1_B)S_+ +t(s_-\otimes1_B)S_-$ for $t\in[0,1]$ gives a norm continuous path between $(s_+\otimes1_B)S_+$ and $(s_-\otimes1_B)S_-$. 
\end{comment}
%\todo{Unify the convention ``$(A,B)$-something" v.s. ``$A$-$B$-something".}

Given a C$^*$-algebra $B$ and Hilbert $B$-modules $E$ and $E'$, we shall adopt the following convention: for $a\in\cL(E)$, $b\in \cL(E',E)$, $c\in\cL(E,E')$, and $d\in \cL(E')$, we let $\left(\begin{array}{cc} a& b \\ c& d \end{array}\right) \in \cL(E\oplus E')$ denote the element $x\in \cL(E\oplus E')$ such that $p_E x p_E=a$, $p_E x p_{E'} =b$, $p_{E'} x p_E =c$, and $p_{E'} xp_{E'}=d$, where $p_E, p_{E'}\in\cL(E\oplus E')$ are the projections onto $E$ and $E'$ respectively. 

\subsection{Actions of unitary tensor categories}
\begin{comment}For C$^*$-tensor categories $(\mathcal{C},\otimes)$ and $(\mathcal{D},\boxtimes)$, $\alpha\colon \mathcal{C}\to\mathcal{D}$ is said to be a C$^*$-tensor functor if it is a C$^*$-functor such that $\alpha(1_{\cC})=1_{\mathcal{D}}$ and there exists unitary natural isomorphisms $J_{X,Y}\colon \alpha(X)\boxtimes \alpha(Y)\to \alpha(X\otimes Y)$ such that 

\begin{equation}\label{diagramJmaps}
\begin{tikzcd}[column sep=5em]
(\alpha(X)\boxtimes \alpha(Y))\boxtimes \alpha(Z)\arrow[r]\arrow[d,"J_{X,Y}\boxtimes\id_{\alpha(Z)}"]  & \alpha(X)\boxtimes (\alpha(Y) \boxtimes \alpha(Z)) \arrow[d,"\id_{\alpha(X)}\boxtimes J_{Y,Z}"] \\
\alpha(X \otimes Y)\boxtimes \alpha(Z) \arrow[d,"J_{X \otimes Y,Z}"] & \alpha(X)\boxtimes \alpha(Y \otimes Z) \arrow[d,"J_{X,Y \otimes Z}"]\\
\alpha((X \otimes Y) \otimes Z )\arrow[r] & \alpha(X \otimes (Y \otimes Z))
\end{tikzcd}
\end{equation}
commutes for all $X,Y,Z\in \mathcal{C}$.
\end{comment}
Throughout this paper, we will denote by $\cC$ a unitary tensor category, by $\otimes$ its monoidal product, by capital letters $X,Y,Z$ objects in $\cC$ and by $1_{\cC}$ the tensor unit. We will assume knowledge of the basic definitions in the theory of rigid C$^*$-tensor categories, which we call here \emph{unitary tensor categories} following the conventions of \cite{CHHPJOPE22} for example, and refer to \cite[Sections 2.1 and 2.2]{NETU13} for a detailed account of the topic. 
We always assume that $1_\cC$ is simple and 
that $\cC$ has countably many isomorphism classes of simple objects. 
We denote by $\Irr(\cC)$ a choice of isomorphism class representatives for the simple objects in $\cC$. Therefore $\Irr(\cC)$ is a countable set.
\par We choose standard solutions to the conjugate equations. For any object $X\in \cC$, these consist of a dual object $\overline{X}\in \cC$ and morphisms $\overline{R}_X\colon 1_{\cC}\rightarrow X\otimes \overline{X}$, $R_X\colon 1_{\cC}\rightarrow \overline{X}\otimes X$. Recall that, as these are solutions to the conjugate equations, the equalities
\begin{equation*}
    (\overline{R}_X^*\otimes \id_X)(\id_X\otimes R_X)=\id_X
\end{equation*}
and 
\begin{equation*}
    (R_X^*\otimes \id_{\overline{X}})(\id_{\overline{X}}\otimes \overline{R}_X)=\id_{\overline{X}}
\end{equation*}
hold for all $X\in \cC$. Moreover, standardness of the solutions is requiring that the left and right traces coincide, i.e., that for any $T\in \End(X)$ for $X\in \cC$
\begin{equation*}
    R_X^*(\id_{\overline{X}}\otimes T)R_X=\overline{R}_X^*(T\otimes\id_{\overline{X}})\overline{R}_X.
\end{equation*}
In particular, standard solutions satisfy $\|R_X\|=\|\overline{R}_X\|=d_X^{1/2}$, where $d_X$ is called the \emph{intrinsic dimension} of $X$. We denote by $\mu_X\colon X\rightarrow \overline{\overline{X}}$ the canonical pivotal structure defined by
\begin{equation}\label{eqn:muX}
\mu_X=(\id_{\overline{\overline{X}}}\otimes R_X^*)(R_{\overline{X}}\otimes \id_X)=(\overline{R}_X^*\otimes \id_{\overline{\overline{X}}})(\id_X\otimes \overline{R}_{\overline{X}}).\footnote{See \cite[Lemma 7.6]{SEL11} for this equality.}
\end{equation}
that is a unitary isomorphism. We now briefly recall important notions and set up notation about actions of C$^*$-tensor categories on C$^*$-algebras. We refer the reader to \cite{intertwining} or \cite{ARKIKU23} for more details.
\begin{defn}\label{defn: categaction}
An \emph{action} of a C$^*$-tensor category $\cC$ on a C$^*$-algebra $A$ is a C$^*$-tensor functor $(\alpha,\fu)\colon\mathcal{C}\to \Bim(A)$, where \[\fu\coloneqq\{\fu_{X,Y}\colon \alpha(X)\otimes \alpha(Y)\rightarrow \alpha(X\otimes Y)\}_{X,Y\in\cC}\] is the coherent family of unitary natural isomorphisms associated to the functor $\alpha$. We will often denote such an action by $(\alpha,\fu)\colon\mathcal{C}\curvearrowright A$. 
In this case, we say that $(A,\alpha,\fu)$ is a \emph{$\cC$-C$^*$-algebra}. 
%if each $\alpha(X)$ is isomorphic to $A$ as a right Hilbert $A$-module, in which case we denote by $\alpha_X\colon A\rightarrow \cM(A)$ the endomorphism defining the left action.
\end{defn}
\begin{rmk}
In this paper, $\cC$ will always be a unitary tensor category. In such a situation, whenever $A$ is separable, $\alpha(X)$ has a countable dense subset (see \cite[Corollary 2.24]{KWP04} and also \cite[(3) Lemma 2.26]{ARKIKU23}). Moreover, each $\alpha(X)$ is proper (this follows from \cite{KWP04}, see also \cite[(2) Lemma 2.26]{ARKIKU23}).
\end{rmk}

\begin{rmk}
    For a $\cC$-C$^*$-algebra $(A,\alpha,\fu)$ and $X\in\cC$, we denote \[R_X^{\alpha}=\fu_{\overline{X},X}^*\alpha(R_X)\ \text{and}\ \overline{R}_X^{\alpha}=\fu_{X,\overline{X}}^*\alpha(\overline{R}_X).\] Then $\alpha(\overline{X})$ is the dual of $\alpha(X)$ and $R_X^{\alpha}$, $\overline{R}_X^{\alpha}$ are also solutions to the conjugate equations. 
\end{rmk}

\begin{rmk}
    When $(\alpha,\fu)$ is an action of $\cC$ on a C$^*$-algebra $A$, and $B$ is another C$^*$-algebra, we can induce an action $(\alpha\otimes \id_{B},\fu\otimes 1)$ on the minimal tensor product $A\otimes B$ by 
\[(\alpha\otimes \id_{B})(X)=\alpha(X)\boxtimes B\] 
\[\fu_{X,Y}((\xi\otimes b)\otimes(\eta\otimes b')=\fu_{X,Y}(\xi\otimes \eta)\boxtimes bb'\] 
with $X,Y\in \cC$, $\xi\in \alpha(X)$, $\eta\in \alpha(Y)$ and $b,b'\in B$. In this paper, we will mainly use this construction with $B=\bK$ in which case we simply denote $(\alpha\otimes \id_{\bK},\fu\otimes 1)$ by $(\alpha^\rs,\fu^\rs)$. In the same way, one can also make sense of $(\id_B\otimes \alpha,1\otimes\fu)$ which is an action of $\cC$ on $B\otimes A$.
\end{rmk}

 For a non-degenerate $A$-$B$-correspondence $E$, we equip $\cL(B,E)$ with the structure of a $\cM(A)$-$\cM(B)$-correspondence with the left and right actions defined by $f\lhd g=f\circ g$ and $h\rhd f=l_E^*(h\otimes \id_{E})l_E \circ f$ for any $f\in \cL(B,E)$, $g\in \cM(B)$, $h\in\cM(A)$ and the right inner product defined by $\langle f,g\rangle=f^*\circ g$ for any $f,g\in \cL(B,E)$. 
 In these formulae, we identify $\cM(B)$ with $\cL(B)$. Similarly, we equip $\cL(E,B)$ with the structure of an (algebraic) $\cM(B)$-$\cM(A)$-bimodule by $h\rhd f=h\circ f$ and $f\lhd g=f\circ l_E^{-1}(g\otimes \id_E)l_E$ for $h\in \cM(B)$, $g\in \cM(A)$ and $f\in \cL(E,B)$. We now set up the conjugate operations defined on those bimodules that appear as the image of an action of $\cC$ on a C$^*$-algebra. This operation is involutive up to $\mu_X$, where $\mu_X$ is the pivotal structure of \eqref{eqn:muX}. 

\begin{lemma}\label{lemma:barnotation}
    \par Let $(A,\alpha,\fu)$ be a $\cC$-C$^*$-algebra. For $X\in \cC$ and $\xi\in \cL(A,\alpha(X))$, there is unique $\overline{\xi}\in\cL(A,\alpha(\overline{X}))$ such that 
    \begin{align}\label{eqn:barinnerprod}
        &
        \langle\xi,\eta\rangle=\xi^*\eta=(R_X^{\alpha})^*(\overline{\xi}\otimes_A \id_{\alpha(X)}) \eta
    \end{align}
    for all $\eta \in \cL(A,\alpha(X))$. 
    This gives an anti-linear bijection 
    \begin{align*}
        \cL(A,\alpha(X))\ni \xi &\mapsto \overline{\xi} \in \cL(A,\alpha(\overline{X})),
    \end{align*}
    which restricts to a bijection from $\cK(A,\alpha(X))$ to $\cK(A,\alpha(\overline{X}))$. 
    Moreover, one has that 
    $\overline{x\rhd\xi\lhd y}=y^*\rhd \overline{\xi}\lhd x^*$, 
    $\overline{\overline{\xi}}=\alpha(\mu_X)\xi$, and $\|\overline{\xi}\|\leq d_X^{1/2}\|\xi\|$ for all $x,y\in \cM(A)$ and $\xi\in \cL(A,\alpha(X))$. 
\end{lemma}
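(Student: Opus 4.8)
The plan is to produce $\overline{\xi}$ by an explicit ``rotation'' of $\xi$ against the chosen solutions to the conjugate equations, to verify the characterising identity with the zig--zag relations, and then to read off the remaining properties.

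\emph{Construction and basic estimates.} I would set
\[
\overline{\xi}\;:=\;(\xi^*\otimes_A\id_{\alpha(\overline{X})})\circ\overline{R}_X^{\alpha}\ \in\ \cL(A,\alpha(\overline{X})),\qquad\text{equivalently}\qquad \overline{\xi}^{\,*}=(\overline{R}_X^{\alpha})^*(\xi\otimes_A\id_{\alpha(\overline{X})}).
\]
Although $\xi$ need not intertwine the left $A$-actions, it legitimately occupies the \emph{left} leg of an internal tensor product, so $\xi\otimes_A\id_{\alpha(\overline{X})}$ and $\xi^*\otimes_A\id_{\alpha(\overline{X})}$ make sense (these are the operators $T_\xi,T_\xi^*$ of the preliminaries with auxiliary module $\alpha(\overline{X})$); hence $\overline{\xi}$ is a well-defined adjointable right $A$-module map, and anti-linearity of $\xi\mapsto\overline{\xi}$ is immediate from anti-linearity of $\xi\mapsto\xi^*$. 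For the norm bound, internal tensoring with an identity is contractive, so $\|\overline{\xi}\|\le\|\xi\|\,\|\overline{R}_X^{\alpha}\|$, and $\|\overline{R}_X^{\alpha}\|^2=\|\alpha(\overline{R}_X^*\overline{R}_X)\|=\|d_X\cdot\id_A\|=d_X$ by standardness, giving $\|\overline{\xi}\|\le d_X^{1/2}\|\xi\|$.

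\emph{The characterising identity and uniqueness.} Passing to adjoints, \eqref{eqn:barinnerprod} is equivalent to the single operator identity $(\overline{\xi}^{\,*}\otimes_A\id_{\alpha(X)})R_X^{\alpha}=\xi$ in $\cL(A,\alpha(X))$. Substituting the formula for $\overline{\xi}^{\,*}$ and using bifunctoriality of the internal tensor product to slide $\xi$ into a tensor leg disjoint from the legs acted on by $(\overline{R}_X^{\alpha})^*$, the left-hand side becomes $(\overline{R}_X^{\alpha *}\otimes_A\id_{\alpha(X)})(\id_{\alpha(X)}\otimes_A R_X^{\alpha})\circ\xi$, which is $\xi$ by the conjugate equation $(\overline{R}_X^{\alpha *}\otimes\id_{\alpha(X)})(\id_{\alpha(X)}\otimes R_X^{\alpha})=\id_{\alpha(X)}$. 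For uniqueness, suppose $\zeta\in\cL(A,\alpha(\overline{X}))$ satisfies $(R_X^{\alpha})^*(\zeta\otimes_A\id_{\alpha(X)})\eta=0$ for every $\eta\in\cL(A,\alpha(X))$. Every vector of $\alpha(X)$ has the form $\eta(a)$ for some such $\eta$ and some $a\in A$ (Cohen factorisation, since $\alpha(X)$ is non-degenerate as a right module), so $(R_X^{\alpha})^*(\zeta\otimes_A\id_{\alpha(X)})=0$ in $\cL(\alpha(X),A)$; feeding this into the other conjugate equation $(R_X^{\alpha *}\otimes\id_{\alpha(\overline{X})})(\id_{\alpha(\overline{X})}\otimes\overline{R}_X^{\alpha})=\id_{\alpha(\overline{X})}$ exactly as above yields $\zeta=0$. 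Hence $\overline{\xi}$ is the unique element of $\cL(A,\alpha(\overline{X}))$ satisfying \eqref{eqn:barinnerprod}.

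\emph{The remaining properties.} For $x,y\in\cM(A)$ one computes $(x\rhd\xi\lhd y)^*$ by composing $\xi^*$ with left multiplication by $x^*$ on $\alpha(X)$ and by $y^*$ on $A$; since $\overline{R}_X^{\alpha}=\fu_{X,\overline{X}}^*\alpha(\overline{R}_X)$ lies in $\Hom(A,\alpha(X)\otimes_A\alpha(\overline{X}))$ and hence commutes with the left $A$-action, these multiplications pass through it, producing $\overline{x\rhd\xi\lhd y}=y^*\rhd\overline{\xi}\lhd x^*$. For $\overline{\overline{\xi}}=\alpha(\mu_X)\xi$, substituting the defining formula twice and rearranging legs by bifunctoriality rewrites $\overline{\overline{\xi}}$ as $(\overline{R}_X^{\alpha *}\otimes_A\id_{\alpha(\overline{\overline{X}})})(\id_{\alpha(X)}\otimes_A\overline{R}_{\overline{X}}^{\alpha})\circ\xi$, and the operator here is precisely $\alpha(\mu_X)$: this is the second expression for $\mu_X$ in \eqref{eqn:muX}, transported through $\alpha$ and the coherence maps $\fu$. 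Since $\alpha(\mu_X)\colon\alpha(X)\to\alpha(\overline{\overline{X}})$ is a unitary isomorphism, applying the previous identity with $X$ and with $\overline{X}$ shows that $\overline{(\cdot)}\circ\overline{(\cdot)}$ is a bijection in both directions, whence $\overline{(\cdot)}\colon\cL(A,\alpha(X))\to\cL(A,\alpha(\overline{X}))$ is a bijection. Finally, if $\xi\in\cK(A,\alpha(X))$ then $\xi^*\in\cK(\alpha(X),A)$, so $\xi^*\otimes_A\id_{\alpha(\overline{X})}\in\cK(\alpha(X)\otimes_A\alpha(\overline{X}),\alpha(\overline{X}))$ by \cref{lem:compactstensoridentity} (as $\alpha(\overline{X})$ is proper), and hence $\overline{\xi}=(\xi^*\otimes_A\id_{\alpha(\overline{X})})\overline{R}_X^{\alpha}\in\cK(A,\alpha(\overline{X}))$ by the ideal property of the compacts; the reverse implication follows from $\overline{\overline{\xi}}=\alpha(\mu_X)\xi$.

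The delicate point is the identity $\overline{\overline{\xi}}=\alpha(\mu_X)\xi$: one must track the coherence isomorphisms $\fu$ carefully when composing the two rotations and match the outcome exactly with \eqref{eqn:muX}. A secondary recurring nuisance is that $A$ need not be unital, so the ``slide $\xi$ into a tensor leg'' steps must be justified through approximate units and multiplier estimates rather than by evaluating at a unit.
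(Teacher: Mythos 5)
Your proposal is correct and follows essentially the same route as the paper: the same explicit formula $\overline{\xi}=(\xi^*\otimes_A\id_{\alpha(\overline{X})})\circ\overline{R}_X^{\alpha}$, the conjugate equations for the characterising identity and uniqueness, the identity \eqref{eqn:muX} for $\overline{\overline{\xi}}=\alpha(\mu_X)\xi$, standardness for the bound $\|\overline{R}_X^\alpha\|\le d_X^{1/2}$, and properness plus \cref{lem:compactstensoridentity} for the statement about compacts. The only cosmetic difference is that the paper packages the construction as a pair of $\cM(A)$-bilinear maps $S_X,\overline{S}_X$ with an explicit mutual inverse, whereas you obtain bijectivity from the double-conjugate identity applied at $X$ and $\overline{X}$; both arguments are valid.
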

\begin{proof}
Consider the following linear maps 
\begin{align*}
    S_X\colon \cL(\alpha(\overline{X}),A)&\rightarrow \cL(A,\alpha(X))\\
    \xi&\mapsto (\xi\otimes \id_{\alpha(X)})\circ R_{X}^{\alpha},
\end{align*}
\begin{align*}
    \overline{S}_X\colon \cL(\alpha(X),A)&\rightarrow \cL(A,\alpha(\overline{X}))\\
    \xi &\mapsto (\xi\otimes \id_{\alpha(\overline{X})})\circ \overline{R}_{X}^{\alpha}.
\end{align*}
By definition, both $S_X$ and $\overline{S}_X$ are left $\cM(A)$-linear, and it holds $\|\overline{S}_X\|\leq \| \fu_{X,\overline{X}} \alpha(\overline{R}_{X})\|\leq d_X^{1/2}$.

Using the conjugate equations, it is routine to check that the assignment $\xi\mapsto (S_X(\xi^*))^*$ for $\xi\in\cL(A,\alpha(\overline{X}))$ defines the inverse to $\overline{S}_X$, which restricts to a bijection from $\cK(\alpha(X),A)$ to $\cK(A,\alpha(\overline{X}))$ due to the properness of $\alpha(X)$ and $\alpha(\overline{X})$ by \cite[Lemma 2.26 (2)]{ARKIKU23}. 
Since $S_X$ is left $\cM(A)$-linear, we see that $\overline{S}_X^{-1}$ and thus $\overline{S}_X$ are also right $\cM(A)$-linear. 
Note that as $\langle \xi,\eta\rangle=\xi^*\eta$ for $\xi,\eta\in \cL(A,\alpha(X))$ is a positive definite $\cM(A)$ valued inner product, the existence of $\overline{\xi}$ satisfying condition \eqref{eqn:barinnerprod} would imply that $\xi^*=(R_X^{\alpha})^*(\overline{\xi}\otimes_A \id_{\alpha(X)})=\overline{S}_X^{-1}(\overline{\xi})$. In particular, if such $\overline{\xi}$ exists, it is unique and 
\begin{align}\label{eqn:barinnerprod2}
    \overline{\xi}=\overline{S}_X(\xi^*) = (\xi^*\otimes \id_{\alpha(\overline{X})})\circ \overline{R}_{X}^{\alpha}.
\end{align}
Therefore it suffices to show that $\overline{\xi}\coloneqq\overline{S}_X(\xi^*)$ satisfies the required conditions. Firstly, for all $\xi,\eta\in\cL(A,\alpha(X))$ it follows from the conjugate equations that
\begin{align*}
    &(R_{X}^{\alpha})^* (\overline{S}_X(\xi^*) \otimes \id_{\alpha(X)}) \eta 
    = 
    (\xi^*\otimes (R_{X}^{\alpha})^*) (\overline{R}_{X}^{\alpha}\otimes \id_{\alpha(X)}) \eta 
    = 
    \xi^*\eta. 
\end{align*}
Moreover, it follows from the rightmost side of \eqref{eqn:muX} that \[\overline{\overline{\xi}}=\overline{S}_{\overline{X}}(\overline{S}_X(\xi^*)^*) = \alpha(\mu_X)\circ\xi.\] As $\overline{S}_X$ is $\cM(A)$-bilinear and $\|\overline{S}_X\|\leq d_X^{1/2}$, it also follows that $\overline{x\rhd \xi\lhd y}=y^*\rhd \overline{\xi}\lhd x^*$ and $\|\overline{\xi}\|\leq d_X^{1/2}\|\xi\|$. 
\end{proof}
\begin{rmk}
    As the bijection between $\cL(A,\alpha(X))$ and $\cL(A,\alpha(\overline{X}))$ of Lemma \ref{lemma:barnotation} sends compact operators to compact operators, it also makes sense to write $\overline{\xi}\in \alpha(\overline{X})$ for any $\xi\in \alpha(X)$. Indeed, one can compose the canonical isomorphisms $\alpha(X)\cong \cK(A,\alpha(X))$ with the construction of Lemma \ref{lemma:barnotation}.
\end{rmk}

\subsection{Cocycle representations revisited}
Next, we introduce an equivalent formulation for $\cC$-cocycle representations between $\cC$-C$^*$-algebras, as defined in \cite{intertwining}, which will play a crucial role in our Cuntz--Thomsen construction of $\KK^{\cC}$. Throughout this subsection we let $(A,\alpha,\fu)$ and $(B,\beta,\fv)$ be $\cC$-C$^*$-algebras for a unitary tensor category $\cC$. We start by recalling the definition of a $\cC$-equivariant structure on a correspondence (see e.g. \cite[Definition 3.2]{AFclass} or \cite[Definition 2.12]{ARKIKU23} and \cite[Definition 3.1]{intertwining} where the same notion appears under different names).
\begin{defn}\label{defn: cocyclerep}
  %Let $(\alpha,\fu)\colon \mathcal{C}\curvearrowright A$ and $(\beta,\fv)\colon\mathcal{C}\curvearrowright B$ be actions of a unitary tensor category $\mathcal{C}$ on C$^*$-algebras $A$ and $B$. 
  Let $\cC$ be a unitary tensor category and $(A, \alpha,\fu)$, $(B, \beta,\fv)$ be $\cC$-C$^*$-algebras. Let $E$ be an $A$-$B$-correspondence. A \emph{$\cC$-equivariant structure} on $E$ consists of a natural family of (not necessarily adjointable) $A$-$B$-bilinear isometries $\{\mathbbm{u}_X\colon \alpha(X)\otimes E\to E\otimes\beta(X)\}_{X\in\cC}$ 
  such that for all $X,Y\in\mathcal{C}$, the following pentagon diagram 
\begin{equation}\label{cocyclemorphismdiagram}
\begin{adjustbox}{max width=\textwidth}
\begin{tikzcd}
& \alpha(X)\otimes \alpha(Y)\otimes E 
\ar{dr}{\fu_{X,Y}\otimes\id_{E}}
\ar[swap]{dl}{\id_{\alpha(X)}\otimes\mathbbm{u}_Y} &    
\\   
\alpha(X)\otimes E \otimes\beta(Y) \ar[swap]{dd}{\mathbbm{u}_X\otimes\id_{\beta(Y)}} 
& &\alpha(X\otimes Y)\otimes E \ar{dd}{\mathbbm{u}_{X\otimes Y}}
\\
& &
\\
E\otimes\beta(X)\otimes \beta(Y)
\ar{rr}{\id_{E}\otimes \fv_{X,Y}} 
&    
& E \otimes\beta(X\otimes Y), 
\end{tikzcd}
\end{adjustbox}
\end{equation}commutes. In this case, we say that $(E,\phi,\mathbbm{u})$ is a $\cC$-$A$-$B$-correspondence, where $\phi$ is the left $A$-action on $E$.
When $E=B$ with the left action given by a $^*$-homomorphism $\phi\colon A\rightarrow \cM(B)$, a pair $\phi$ along with a $\cC$-equivariant structure $\bv_X$ on ${}_\phi B$, denoted simply by $(\phi,\bv)$, is called a \emph{$\cC$-cocycle representation}. We denote the collection of $\cC$-cocycle representations from $(A,\alpha,\fu)$ to $(B,\beta,\fv)$ by $\Hom^{\cC}((\alpha,\fu),(\beta,\fv))$. If $\phi$ as in the previous sentence is in fact valued in $B$ then we call $(\phi,\bv)$ a \emph{$\cC$-cocycle morphism}.
\end{defn}
Note that if $(\phi,\bu)$ is a $\cC$-cocycle representation, then $\mathbbm{u}_{1_\cC}\colon A\otimes {}_{\phi}B\to {}_{\phi} B\otimes B\cong {}_{\phi}B$ is automatically given by $\mathbbm{u}_{1_\cC}(a\otimes x)=\phi(a)x$ for any $a\in A$ and $x\in B$ (see \cite[Remark 2.14]{ARKIKU23}). 
\begin{rmk}\label{rmk:composition}
By \cite[Lemma 3.8, Remark 3.9]{intertwining} one can compose $\cC$-cocycle representations \[(\phi,\bu)\colon (A,\alpha,\fu)\rightarrow (B,\beta,\fv)\ \text{and} \ (\psi,\bv)\colon (B,\beta,\fv)\rightarrow (C,\gamma,\fw)\] whenever $\psi\colon B\rightarrow \cM(C)$ is an extendible $^*$-homomorphism (i.e. that for an approximate unit $e_{\lambda}$ of $B$ the net $\psi(e_{\lambda})$ converges to some projection in $\cM(C)$). We denote this composition by $(\psi,\bv)\circ (\phi,\bu)$.
\end{rmk}
\par In the spirit of \cite[Lemma 3.8]{AFclass} and \cite[Definition A]{intertwining}, we can package the information of a $\cC$-cocycle representation into that of a family of linear maps indexed by the objects in the category. 

%Let $\phi\colon A\rightarrow \cM(B)$ be a $^*$-homomorphism. If $\bu_X\colon \alpha(X)\otimes{}_\phi B\rightarrow {}_\phi B\otimes \beta(X)$ is a collection of isometries making $(\phi,\bu)$ a cocycle representation, then one can associate a family of linear maps 
%\begin{align*}
    %\phi_X\colon \alpha(X)&\rightarrow \cL(B,\beta(X))\\
    %\xi&\mapsto \bu_X T_{\xi}^{\phi},
%\end{align*}
%which are well-defined by \cite[Lemma 2.31 (2)]{ARKIKU23}. Moreover, for any family of bounded linear maps $\phi_X\colon \alpha(X)\rightarrow \cL(B,\beta(X))$ satisfying certain compatibility conditions, we show that one can reconstruct a $\cC$-equivariant structure on the $^*$-homomorphism $\phi$.

\begin{prop}\label{prop:phiXmaps}
Let $(A,\alpha,\fu)$ and $(B,\beta,\fv)$ be $\cC$-C$^*$-algebras for a unitary tensor category $\cC$. Let $\phi\colon A\rightarrow \cM(B)$ be a $^*$-homomorphism. Then the families of bilinear isometries $\bu_X\colon \alpha(X)\otimes{}_{\phi}B\rightarrow {}_\phi B\otimes \beta(X)$ for $X\in \cC$ such that $(\phi,\bu)$ is a $\cC$-cocycle representation are in one-to-one correspondence with families of linear maps \[\{\phi_X\colon \alpha(X)\rightarrow \cL(B,\beta(X))\}_{X\in \cC}\] such that for any $X,Y\in \cC$, $a\in A$, $\xi,\eta\in\alpha(X)$, and $\zeta\in\alpha(Y)$ we have that
\begin{enumerate}[label=\textit{(\roman*)}]
    \item $\phi_{1_{\cC}}(a)=\phi(a)$, \label{item:unit}
    \item $\phi_X(a\rhd\xi)=\phi(a)\rhd \phi_X(\xi)$ and $\phi_X(\xi\lhd a)=\phi_X(\xi)\lhd \phi(a)$, \label{item:bimodular}
    \item $\phi(\langle \eta,\xi\rangle)=\langle \phi_X(\eta),\phi_X(\xi)\rangle$,\label{item:isometric}
    \item $\phi_{X\otimes Y}(\fu_{X,Y} (\xi\otimes \zeta))=\fv_{X,Y}\circ(\phi_X(\xi)\otimes \id_{\beta(Y)})\circ \phi_Y(\zeta)$,\label{item:coherence}
     \item $\phi_Y\circ \alpha(f)=\beta(f)\circ \phi_X$ for any $f\in \Hom(X,Y)$.\label{item:naturality}
    
\end{enumerate}
\end{prop}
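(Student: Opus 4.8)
The plan is to establish the bijection by explicitly constructing maps in both directions and checking they are mutually inverse. Given a $\cC$-equivariant structure $\bu = \{\bu_X\}$, I define $\phi_X \colon \alpha(X) \to \cL(B, \beta(X))$ by $\phi_X(\xi) = \bu_X \circ T_\xi^\phi$, where $T_\xi^\phi \in \cL(B, \alpha(X) \otimes {}_\phi B)$ sends $b \mapsto \xi \otimes b$ (using the notation from the Hilbert modules subsection) and we suppress the canonical identification ${}_\phi B \otimes \beta(X) \cong \beta(X)$ coming from $r$. Conversely, given a family $\{\phi_X\}$ satisfying (i)--(v), I need to recover the isometry $\bu_X \colon \alpha(X) \otimes {}_\phi B \to {}_\phi B \otimes \beta(X) \cong \beta(X)$. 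Since $\alpha(X) \otimes {}_\phi B$ is spanned by elementary tensors $\xi \otimes b$ with $\xi \in \alpha(X)$, $b \in B$, I set $\bu_X(\xi \otimes b) = \phi_X(\xi) \lhd b$, i.e. $\bu_X(\xi \otimes b) = \phi_X(\xi)(b)$; condition (iii) is exactly what makes this well-defined and isometric on elementary tensors (it computes inner products correctly), and then it extends to the completion.

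Next I would verify that each direction lands in the right place. For $\bu \mapsto \{\phi_X\}$: left/right $A$- and $B$-bilinearity of $\bu_X$ gives (ii); the fact that $\bu_X$ is an isometry gives (iii); naturality of the family $\{\bu_X\}$ in $X$ gives (v); the pentagon diagram \eqref{cocyclemorphismdiagram} unwinds, after composing with the appropriate $T_\xi$, $T_\zeta$ maps and using $T_{\fu_{X,Y}(\xi\otimes\zeta)} = (\id \otimes T_\zeta) \circ T_\xi$ composed with $\fu_{X,Y}$, precisely into (iv); and the normalization $\bu_{1_\cC}(a \otimes x) = \phi(a)x$ recorded just after Definition \ref{defn: cocyclerep} gives (i). For the reverse direction, starting from $\{\phi_X\}$ satisfying (i)--(v), I check that $\bu_X(\xi\otimes b) = \phi_X(\xi)(b)$ indeed defines an adjointable $A$-$B$-bilinear isometry: bilinearity follows from (ii) (the left $A$-action $a \rhd (\xi \otimes b) = (a \rhd \xi) \otimes b$ and $\phi_X(a\rhd\xi) = \phi(a) \rhd \phi_X(\xi)$), isometry from (iii), naturality of the resulting family from (v), and the pentagon \eqref{cocyclemorphismdiagram} from (iv). Finally I confirm the two constructions are mutually inverse, which is immediate on elementary tensors: $\bu_X \circ T_\xi^\phi$ evaluated at $b$ is $\bu_X(\xi\otimes b) = \phi_X(\xi)(b)$, and conversely.

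The main technical obstacle I anticipate is bookkeeping the canonical natural isomorphisms $l$, $r$ (and the unitaries $\fu$, $\fv$) carefully enough that the pentagon \eqref{cocyclemorphismdiagram} really does translate into the clean equation (iv), since both involve several composites of maps between internal tensor products over $A$ and $B$ and one must track which identifications are being suppressed; in particular one should check that $\bu_X \circ T_\xi^\phi$ is genuinely adjointable and bounded (it is, being a composite of an isometry with $T_\xi^\phi \in \cL(B, \alpha(X)\otimes {}_\phi B)$) and that the image lies in $\cL(B,\beta(X))$ after the identification ${}_\phi B \otimes \beta(X) \cong \beta(X)$. A secondary subtlety is verifying that the map $\xi \otimes b \mapsto \phi_X(\xi)(b)$ extends continuously from the algebraic tensor product to the completion $\alpha(X) \otimes {}_\phi B$; this follows since (iii) shows it is isometric on the pre-Hilbert module of finite sums of elementary tensors, so it extends uniquely. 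Everything else is a routine, if somewhat lengthy, diagram chase of the kind already carried out in \cite[Lemma 3.8]{AFclass} and \cite{intertwining}.
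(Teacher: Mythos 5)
Your proposal is correct and follows essentially the same route as the paper: one direction sends $\bu_X$ to $\phi_X(\xi)=l_{\beta(X)}^*\bu_X T_\xi^\phi$ and the other sends $\{\phi_X\}$ to $\bu_X(\xi\otimes b)=\phi_X(\xi)(b)$, with \ref{item:bimodular} giving well-definedness and bilinearity, \ref{item:isometric} the isometry property, \ref{item:coherence} the pentagon, and \ref{item:naturality} naturality. The only place your justification is too quick is the adjointability of $\bu_X T_\xi^\phi$: since $\bu_X$ is only assumed to be a possibly non-adjointable isometry, adjointability of the composite is not automatic from $T_\xi^\phi$ being adjointable, and the paper invokes \cite[Lemma 2.31 (2)]{ARKIKU23} for exactly this point (also, the suppressed identification ${}_\phi B\otimes\beta(X)\cong\beta(X)$ is the left unitor $l_{\beta(X)}$, not $r$).
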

\begin{proof}
As much of the proof follows in a similar spirit to \cite[Lemma 3.8]{AFclass} and \cite[Lemma 3.11]{intertwining}, we will omit the detailed computations. Suppose we have a family of linear maps $\{\phi_X\}_{X\in\cC}$ satisfying the conditions above. For $X\in \cC$ we let $\bu_X\colon \alpha(X)\otimes{}_\phi B\rightarrow {}_\phi B\otimes \beta(X)$ be defined by
\begin{equation}
    \bu_X(\xi\otimes b)=(l_{\beta(X)}\circ \phi_X(\xi))(b).
\end{equation}
It follows from \ref{item:bimodular} that this is well-defined and bilinear on the algebraic tensor product $\alpha(X)\odot {}_\phi B$. It follows from \ref{item:isometric} that $\bu_X$ is isometric and so it is also well defined on all of $A\otimes {}_\phi B$ (that $\bu_X$ is isometric follows easily for elementary tensors, for the general case it follows by the same argument as in the proof of \cite[Lemma 3.11]{intertwining}). Naturality of $\bu_X$ immediately follows from \ref{item:naturality} and the naturality of $l$. Similarly, combining the naturality of $l$ and \ref{item:coherence}, the coherence of $\bu_X$ follows. We denote the mapping $\{\phi_X\}\mapsto \{\bu_X\}$ by $\Phi$.
\par Suppose $(\phi,\bu)\colon(A,\alpha,\fu)\rightarrow (B,\beta,\fv)$ is a $\cC$-cocycle representation. We define linear maps $\phi_X\colon \alpha(X)\rightarrow \cL(B,\beta(X))$ by 
\begin{align*}
    \phi_X\colon \alpha(X)&\rightarrow \cL(B,\beta(X))\\
    \xi&\mapsto l_{\beta(X)}^*\bu_X T_{\xi}^{\phi}.
\end{align*}
Firstly, we note that $\phi_X(\xi)$ as defined above is indeed adjointable for all $\xi\in \alpha(X)$ by \cite[Lemma 2.31 (2)]{ARKIKU23}. 
%Indeed, the adjoint is given by $\phi_X(\xi)^*=(\id_{\phi}\otimes R_{X}^{\beta})^*(\bu_{\overline{X}}\otimes \id_{\beta(X)})T^{\phi} _{\overline{\xi}}l_{\beta(X)}$ \todo{Do we actually need the formula of the adjoint? Maybe in Proposition \ref{prop:dualrep}? SG}. 
As for any $a,a'\in A$ and $\xi\in \alpha(X)$ one has $ T_{a\rhd\xi\lhd a'}^{\phi}=(a\rhd)\circ T_{\xi}^{\phi}\circ (a'\rhd)$ and $\bu_X$ is bilinear, \ref{item:bimodular} follows. Condition \ref{item:unit} holds as noted after \cref{defn: cocyclerep}. Conditions \ref{item:naturality} and \ref{item:coherence} follow from the naturality and coherence of $\bu_X$ respectively. Condition \ref{item:isometric} follows as $l_{\beta(X)}$ and $\bu_X$ preserve inner products. That $\Psi$ and $\Phi$ are mutually inverse follows from a straightforward computation.
\end{proof}
In the previous proposition we have denoted the use of the map $l_{\beta(X)}$ and its inverse. We will often omit these from computations hereinafter.
\begin{rmk}\label{rmk: LinearMapsNotation}
For ease of notation, we will denote a $\cC$-cocycle representation by $\phi\colon (A,\alpha,\fu)\to (B,\beta,\fv)$, where $\phi$ stands for the collection of linear maps $\{\phi_X\}_{X\in\cC}$ satisfying the conditions of Proposition \ref{prop:phiXmaps} with respect to the $^*$-homomorphism $\phi_{1_{\cC}}$.
\end{rmk}

\begin{rmk}\label{rmk: GroupCase}
If $(A,\alpha)$ and $(B,\beta)$ are actions of a countable discrete group $G$, then we may canonically identify them with actions of the opposite unitary tensor category of $G$-graded finite dimensional Hilbert spaces denoted $\Hilb(G)^{\op}$. Recall that, as discussed in \cite{intertwining}, a cocycle representation $(\phi,\mathbbm{u})\colon (A,\alpha)\rightarrow (B,\beta)$ in the sense of \cite{cocyclecategszabo,GASZ22,DynamicalKP}, canonically induces a $\Hilb(G)^{\op}$-cocycle representation $(A,\alpha)\rightarrow (B,\beta)$, where by abusing notation we regard $\bu^*$ as the natural family of bilinear isometries determined by ${}_{\alpha_g}A\otimes {}_\phi B\to {}_{\phi\alpha_g}B\xrightarrow{\bu_g^*} {}_{\beta_g\phi}B \cong {}_{\phi}B\otimes {}_{\beta_g}B$ on each $g\in G\cong \Irr(\Hilb(G^{\op}))$. In this case, it follows from a similar calculation to the one in \cite[Example 3.13]{intertwining}, that the associated family of maps to the cocycle representation $(\phi,\bu)$ as in Proposition \ref{prop:phiXmaps} is $\{\phi_g\,|\, g\in G\}$ with the map $\phi_g\colon A\to\mathcal{M}(B)$ for $g\in G$ given by $\phi_g(a)=\mathbbm{u}_g^*\phi(a)$ for any $a\in A$.
\par More generally, it follows from Proposition \ref{prop:phiXmaps} that a $\Hilb(G)^{\op}$-cocycle representation $\phi\colon (A,\alpha)\rightarrow (B,\beta)$ consists of a family of linear maps $\{\phi_g\colon A\rightarrow\cM(B)\}_{g\in G}$ satisfying
\begin{enumerate}
    \item $\phi_1\colon A\rightarrow \cM(B)$ is a $^*$-homomorphism,
    \item $\phi_g(\alpha_g(a_1)a_2)=\beta_g(\phi_1(a_1))\phi_g(a_2)$, $\phi_g(a_1a_2)=\phi_g(a_1)\phi_1(a_2)$,
    \item $\phi_1(a_1^*a_2)=\phi_g(a_1)^*\phi_g(a_2)$,
    \item $\phi_{gh}(\alpha_g(a_1)a_2)=\beta_g(\phi_h(a_1))\phi_g(a_2)$,
\end{enumerate}
for all $a_1,a_2\in A$ and $g,h\in G$.
\end{rmk}

\begin{rmk}\label{rmk:compnondegcocyclereps}
 Let $\theta\colon (B,\beta,\fv)\rightarrow (C,\gamma,\fw)$ be a $\cC$-cocycle representation. We call $\theta$ \emph{non-degenerate} if $\theta_{1_{\cC}}\colon B\rightarrow \cM(C)$ is a non-degenerate $^*$-homomorphism. Firstly, through the canonical identification of the correspondences $\cK(B,\beta(X))$ and $\beta(X)$ one can see $\theta_X$ as a linear map $\cK(B,\beta(X))\rightarrow \cL(C,\gamma(X))$. Using the non-degeneracy of $\theta_{1_{\cC}}$, for each $X\in \cC$, one can extend $\theta_X$ to a linear map $\overline{\theta}_X\colon \cL(B,\beta(X))\rightarrow \cL(C,\gamma(X))$ by $\overline{\theta}_X(\xi)(\theta_{1_{\cC}}(b)c)=\theta_X(\xi(b))(c)$. That this is well-defined follows from a standard argument using both approximate units and the continuity of $\theta_X$. Moreover, it follows that the extension $\overline{\theta}_X$ satisfies $\theta_{1_{\cC}}(b)\rhd \overline{\theta}_X(\xi)=\overline{\theta}_X(b\rhd \xi)$ and $\overline{\theta}_X(\xi)\lhd \theta_{1_{\cC}}(b)=\overline{\theta}_X(\xi\lhd b)$ for all $\xi\in \cL(B,\beta(X))$ and $b\in B$ and is the unique linear extension of $\theta_X$ satisfying these conditions. Hereinafter, we will denote $\overline{\theta}_X$ also by $\theta_X$.
    \par Let $\phi\colon (A,\alpha,\fu)\rightarrow (B,\beta,\fv)$ be another (possibly degenerate) $\cC$-cocycle representation. It follows from the proof of Proposition \ref{prop:phiXmaps} that $(\theta\circ \phi)_X(\xi)(c)=\theta_X(\phi_X(\xi))(c)$ for any $X\in \cC$ and $c\in C$, where we've extended $\theta_X$ to all of $\cL(B,\beta(X))$ in the right hand side.
\end{rmk}
In the following example we will canonically identify $\cK(B,E)$ with $E$ for a Hilbert $B$-module $E$.
\begin{example}\label{example: cocyclerep}
    Let $(A,\alpha,\fu)$ be a $\cC$-C$^*$-algebra. 
    We will frequently use $\cC$-cocycle representations of the following forms. 
    \begin{enumerate}
        \item 
        For an isometry $v\in\cM(A)$, the family of maps 
        $\alpha(X)\ni \xi\mapsto v\rhd \xi\lhd v^* \in \alpha(X)$ over $X\in\cC$ satisfies the conditions of \cref{prop:phiXmaps} and thus gives a $\cC$-cocycle representation denoted by $\Ad(v)\colon (A,\alpha,\fu)\to (A,\alpha,\fu)$. 
        \item 
        For C$^*$-algebras $B,C$ and a $^*$-homomorphism $\phi\colon B\to \cM(C)$, the family of maps $\{ \id_{\alpha(X)}\otimes \phi \}_{X\in\cC}$ satisfies the conditions of \cref{prop:phiXmaps} and thus gives a $\cC$-cocycle representation denoted by $\id_A\otimes\phi\colon (A\otimes B,\alpha\otimes \id,\fu\otimes 1)\to (A\otimes C,\alpha\otimes \id,\fu\otimes 1)$. 
        When $\phi$ is $\ev_t\colon C[0,1]\to\bC$, the evaluation at a given point $t\in[0,1]$, sometimes we still write $\ev_t$ to indicate the $\cC$-cocycle representation $\id_B\otimes\ev_t\colon (B\otimes C[0,1],\beta\otimes\id,\bv\otimes1)\to (B,\beta,\bv)$ by abusing notation. 
    \end{enumerate}
\end{example}
%If $\phi\colon (A,\alpha,\fu)\rightarrow (B,\beta,\fv)$ and $\psi\colon (B,\beta,\fv)\rightarrow (C,\gamma,\fw)$ are cocycle representations with $\psi_{1_{\cC}}$ extendible then we denote their composition by $\psi\circ\phi$ (this composition descends from the composition of cocycle representations when viewed as pairs $(\phi,\bu)$ and $(\psi,\bv)$ see Remark \ref{rmk:composition}). 
Two $\cC$-cocycle representations $\phi,\psi\colon (A,\alpha,\fu)\rightarrow (B,\beta,\fv)$ are called \emph{unitarily equivalent}, if there exists a unitary $u\in \cM(B)$ with $\Ad(u)\circ\phi=\psi$. We end this subsection by collecting some further properties of $\cC$-cocycle representations.

\begin{prop}\label{prop:dualrep}
    Let $(A,\alpha,\fu)$ and $(B,\beta,\fv)$ be $\cC$-C$^*$-algebras for a unitary tensor category $\cC$. Let $\phi\colon (A,\alpha,\fu)\to (B,\beta,\fv)$ be a $\cC$-cocycle representation. Then one has that
    \[\overline{\phi_X(\xi)}=\phi_{\overline{X}}(\overline{\xi})\] for any $X\in\cC$ and $\xi\in\alpha(X)$.
\end{prop}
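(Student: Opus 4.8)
The plan is to reduce the statement to the defining property of the conjugation operation on $\cL(B,\beta(X))$ from Lemma~\ref{lemma:barnotation}, namely the inner-product characterisation~\eqref{eqn:barinnerprod} (applied to the $\cC$-C$^*$-algebra $(B,\beta,\fv)$ in place of $(A,\alpha,\fu)$). Recall that $\overline{\phi_X(\xi)}$ is the unique element of $\cL(B,\beta(\overline X))$ satisfying $\langle \phi_X(\xi),\zeta\rangle = (R_X^{\beta})^*(\overline{\phi_X(\xi)}\otimes_B \id_{\beta(X)})\zeta$ for all $\zeta\in\cL(B,\beta(X))$; equivalently, by~\eqref{eqn:barinnerprod2}, $\overline{\phi_X(\xi)} = (\phi_X(\xi)^*\otimes\id_{\beta(\overline X)})\circ \overline R_X^{\beta}$. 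So it suffices to verify that $\phi_{\overline X}(\overline\xi)$ satisfies this same inner-product identity, i.e. that $\langle \phi_X(\xi),\phi_X(\eta)\rangle = (R_X^\beta)^*(\phi_{\overline X}(\overline\xi)\otimes_B\id_{\beta(X)})\phi_X(\eta)$ for all $\eta\in\alpha(X)$, since such elements $\phi_X(\eta)$ are compact and one can pass to general $\zeta$ by density together with continuity/adjointability (or one simply checks the identity on a generating set and invokes uniqueness).

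First I would unwind $R_X^\beta = \fv_{\overline X,X}^*\beta(R_X)$ and express everything in terms of the coherence condition~\ref{item:coherence} of Proposition~\ref{prop:phiXmaps}. The key computation is to rewrite $(R_X^\beta)^*\circ(\phi_{\overline X}(\overline\xi)\otimes\id_{\beta(X)})\circ\phi_X(\eta)$: using \ref{item:coherence} with the pair $(\overline X,X)$, the composite $\fv_{\overline X,X}\circ(\phi_{\overline X}(\overline\xi)\otimes\id_{\beta(X)})\circ\phi_X(\eta)$ equals $\phi_{\overline X\otimes X}(\fu_{\overline X,X}(\overline\xi\otimes\eta))$. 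Then, by naturality \ref{item:naturality} applied to the morphism $R_X\colon 1_\cC\to\overline X\otimes X$, we get $\phi_{\overline X\otimes X}\circ\alpha(R_X) = \beta(R_X)\circ\phi_{1_\cC}$, and hence $\beta(R_X)^*\circ\phi_{\overline X\otimes X} $ applied to $\fu_{\overline X,X}(\overline\xi\otimes\eta)$ should collapse — after identifying $\fu_{\overline X,X}^*\alpha(R_X) = R_X^\alpha$ — to $\phi_{1_\cC}((R_X^\alpha)^*(\overline\xi\otimes_A\id_{\alpha(X)})\eta) = \phi((R_X^\alpha)^*(\overline\xi\otimes\eta))$. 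By the defining property of $\overline\xi$ in $(A,\alpha,\fu)$, this inner quantity is exactly $\langle\xi,\eta\rangle_A$, so we obtain $\phi(\langle\xi,\eta\rangle) = \langle\phi_X(\xi),\phi_X(\eta)\rangle$ by \ref{item:isometric}, which is precisely the identity we needed.

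The one genuine subtlety — and the step I expect to require the most care — is the bookkeeping with the left/right tensor-leg conventions and the associativity/unit isomorphisms when moving $\beta(R_X)^*$ past $\phi_{\overline X\otimes X}$ and re-expressing $\fu_{\overline X,X}^*\alpha(R_X)$ as $R_X^\alpha$; in particular one must be careful that the "$\otimes_B\id_{\beta(X)}$" in the statement of Lemma~\ref{lemma:barnotation} matches the leg on which $\fv_{\overline X,X}$ acts, and that the identification $\cL(B,\beta(1_\cC))\cong\cL(B)\cong\cM(B)$ used implicitly in \ref{item:unit} is compatible with how $\phi_{1_\cC}$ sees the scalar $(R_X^\alpha)^*(\overline\xi\otimes\eta)\in A\subseteq\cM(A)$ (strictly, one should factor through $\cK(A,\alpha(1_\cC))\cong A$ as in the remark following Lemma~\ref{lemma:barnotation}). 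Once these identifications are fixed consistently, the chain of equalities above is a routine diagram chase through conditions \ref{item:unit}–\ref{item:naturality}. Finally, by the uniqueness clause of Lemma~\ref{lemma:barnotation}, establishing the inner-product identity for all $\eta$ (equivalently on the dense set of compact operators, then extending) forces $\phi_{\overline X}(\overline\xi) = \overline{\phi_X(\xi)}$, completing the proof.
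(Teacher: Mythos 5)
Your computation establishing
\[
\langle\phi_X(\xi),\phi_X(\eta)\rangle
=(R_X^{\beta})^*\bigl(\phi_{\overline X}(\overline\xi)\otimes\id_{\beta(X)}\bigr)\phi_X(\eta)
\]
for all $\eta\in\alpha(X)$ is essentially correct (coherence \ref{item:coherence} for the pair $(\overline X,X)$, naturality \ref{item:naturality} applied to the morphism $R_X^*\colon\overline X\otimes X\to 1_\cC$, and the isometry condition \ref{item:isometric}). The problem is the final step where you invoke the uniqueness clause of \cref{lemma:barnotation}. That uniqueness requires the inner-product identity for \emph{all} $\zeta\in\cL(B,\beta(X))$, or equivalently the operator equality $\phi_X(\xi)^*=(R_X^\beta)^*(\phi_{\overline X}(\overline\xi)\otimes\id_{\beta(X)})$ in $\cL(\beta(X),B)$. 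You only check it for $\zeta=\phi_X(\eta)$, and such elements have ranges lying in $\overline{\phi_{1_\cC}(A)\rhd\beta(X)}$, which is a proper closed submodule of $\beta(X)$ whenever the underlying $^*$-homomorphism $\phi_{1_\cC}\colon A\to\cM(B)$ is degenerate — which is allowed in \cref{defn: cocyclerep}. So the claimed density passage fails: you have shown that a certain adjointable operator vanishes on $\overline{\phi_{1_\cC}(A)\rhd\beta(X)}$, not that it is zero. (The extreme case $\phi=0$ makes this clear: the inner-product identity you derive is vacuous, yet the desired conclusion still needs a proof.)

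The paper avoids this by never appealing to the uniqueness/inner-product characterisation. It reduces by non-degeneracy of $\alpha(X)$ to $\xi$ of the form $a\rhd\xi'$, rewrites both sides via $\cM(A)$-bilinearity as $\overline{\phi_X(\xi')}\lhd\phi_{1_\cC}(a^*)$ and $\phi_{\overline X}(\overline{\xi'}\lhd a^*)$, and then directly compares the two as maps $B\to\beta(\overline X)$ evaluated on arbitrary $b\in B$. This evaluation is computed via a commutative diagram built from the naturality and coherence (pentagon) of the isometries $\bu_X$, which is essentially an explicit unwinding of the formula $\overline{\phi_X(\xi)}=(\phi_X(\xi)^*\otimes\id_{\beta(\overline X)})\circ\overline R_X^\beta$ from \eqref{eqn:barinnerprod2}. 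If you want to keep your style of argument in the $\{\phi_X\}$-picture, the fix is to compute $\phi_{\overline X}(\overline\xi)$ and $\overline{\phi_X(\xi)}$ directly from \eqref{eqn:barinnerprod2} — using \ref{item:naturality} for $\overline R_X$ together with \ref{item:coherence} and \ref{item:bimodular} — rather than trying to match them through the inner-product criterion, which does not have enough test vectors when $\phi_{1_\cC}$ is degenerate.
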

\begin{proof}
    As $\alpha(X)$ is non-degenerate, we only have to show the desired equality for elements of the form $a\rhd \xi$ for $X\in \cC$, $\xi\in\alpha(X)$ and $a\in A$. We have that $\overline{\phi_X(a\rhd \xi)}=\overline{\phi_X(\xi)}\lhd\phi_{1_\cC}(a^*)$ and $\phi_{\overline{X}}(\overline{a\rhd \xi})=\phi_{\overline{X}}(\overline{\xi}\lhd a^*)$. 
    Thus, it suffices to show the equality 
    \begin{align}\label{eqn:prop:dualrep}
        &
        \overline{\phi_X(\xi)}(\phi_{1_\cC}(a^*)b) = \phi_{\overline{X}}(\overline{\xi} \lhd a^*)(b)
    \end{align}
    in $\beta(\overline{X})$ for all $b\in B$. For the remainder of the proof we let $\bu_X$ be the family of isometries corresponding to $\phi_X$ (see \cref{prop:phiXmaps}) and denote  $\phi_{1_{\cC}}$ simply by $\phi$. By naturality and coherence of $\bu_X$ we have the following commutative diagram
    \begin{equation*}
    \begin{tikzcd}[cells={nodes={font=\small}}]
    {A\otimes}{}_\phi B \arrow[d,"\alpha(\overline{R}_X)\otimes\id_{\phi}"] \arrow[r,"\bu_{1_{\cC}}"] &\overline{\phi(A) B} \otimes B \arrow[d,"\id_{\phi} \otimes\beta(\overline{R}_X)"]\ar[r,"l_{B}"]& B\ar[dd,"R_X^{\beta}"]
    \\
    \alpha(X\otimes \overline{X})\otimes{}_\phi B \arrow[r,"\bu_{X\otimes\overline{X}}"] \arrow[dd,"\fu_{X,\overline{X}}^*\otimes \id_{\phi}"']&
    \overline{\phi(A) B} \otimes \beta(X\otimes \overline{X})\arrow[d,"\id_{\phi}\otimes \fv_{X,\overline{X}}^*"]
    \\
    & 
    \overline{\phi(A) B} \otimes \beta(X)\otimes\beta(\overline{X}) \arrow[d,"\bu_{X}^*\otimes\id_{\beta(\overline{X})}"]\ar[r,"l_{\beta(X)\otimes \beta(\overline{X})}"]&\beta(X)\otimes \beta(\overline{X})\ar[dd,"\phi_X(\xi)^*\otimes \id_{\beta(\overline{X})}" swap]
    \\
    \alpha(X)\otimes\alpha(\overline{X})\otimes{}_\phi B \arrow[d,"T_\xi^*"'] \arrow[r,"\id_{\alpha(X)}\otimes\bu_{\overline{X}}"]&
    \alpha(X)\otimes {}_\phi B\otimes\beta(\overline{X}) \arrow[d,"T_\xi^*"]&
    \\
    \alpha(\overline{X})\otimes {}_\phi B \arrow[r,"\bu_{\overline{X}}"]&
    B\otimes\beta(\overline{X}) \ar[r,equal]& B\otimes \beta(\overline{X})
    \end{tikzcd}
    \end{equation*}
    %\begin{equation*}
    %\begin{tikzcd}
    %{A\otimes}{}_\phi B \arrow[r,"\alpha(\overline{R}_X)\otimes1"] \arrow[d,"\bu_1"] &
    %\alpha(X\otimes \overline{X})\otimes{}_\phi B \arrow[ddl,"\bu_{X\otimes\overline{X}}"] \arrow[r,"\fu_{X,\overline{X}}^*\otimes 1"]&
    %\alpha(X)\otimes\alpha(\overline{X})\otimes{}_\phi B \arrow[dd,"T_\xi^*"] \arrow[dddl,"1\otimes\bu_{\overline{X}}"] \\
    %\overline{\phi(A) B} \otimes B \arrow[d,"\beta(\overline{R}_X)"]\\
    %\overline{\phi(A) B} \otimes \beta(X\otimes \overline{X})\arrow[d,"\id_{\phi}\otimes \fv_{X,\overline{X}}^*"] && \alpha(\overline{X})\otimes {}_\phi B \arrow[d,"\bu_{\overline{X}}"]
    %\\
    %\overline{\phi(A) B} \otimes \beta(X)\otimes\beta(\overline{X}) \arrow[r,"\bu_{X}^*\otimes1"] &
    %\alpha(X)\otimes {}_\phi B\otimes\beta(\overline{X}) \arrow[r,"T_\xi^*"] & B\otimes\beta(\overline{X}) 
    %\end{tikzcd}
    %\end{equation*}
    where we make sense of $\bu_X^*\otimes \id_{\beta(\overline{X})}$ in the diagram above since $\bu_X$ is a unitary isomorphism from $\alpha(X)\otimes{}_\phi B$ to the closed linear span of $\phi(A)\rhd (B\otimes \beta(X))$ (see \cite[Lemma 2.31 (1)]{ARKIKU23}). Now, the image of $a^*\otimes b \in A\otimes {}_\phi B$ in $\beta(\overline{X})$ via the lower left composition in the diagram followed by the identification of $B\otimes \beta(\overline{X})$ with $\beta(\overline{X})$ coincides with
    \begin{align*}
        \bu_{\overline{X}}(T_\xi^*R_X^{\alpha}(a^*)\otimes b)&=  \bu_{\overline{X}}(T_{\overline{\xi}\lhd a^*}(b))\\
        &=\phi_{\overline{X}}(\overline{\xi}\lhd a^*)(b).
    \end{align*}
    and its image via the upper rightmost composition followed again by the identification of $B\otimes \beta(\overline{X})$ with $\beta(\overline{X})$ coincides by definition with $\overline{\phi_X(\xi)}(\phi(a^*)b)$. Thus \eqref{eqn:prop:dualrep} holds as desired. 
\end{proof}

This yields the following relation that will be used later. 

\begin{lemma}\label{lem:usingisometryofstar}
     Let $(A,\alpha,\fu)$ and $(B,\beta,\fv)$ be $\cC$-C$^*$-algebras for a unitary tensor category $\cC$. Let $\phi\colon (A,\alpha,\fu)\rightarrow (B,\beta,\fv)$ be a $\cC$-cocycle representation.\ Then for any $x,y\in \cM(B)$, $X\in \cC,$ and $\xi\in \alpha(X)$ we have that
     \[x\rhd \phi_{X}(\xi)\lhd y^*=\beta(\mu_X^{-1}) \circ \bigl( \overline{y\rhd \phi_{\overline{X}}(\overline{\xi})\lhd x^*} \bigr).\] 
\end{lemma}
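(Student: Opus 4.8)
The plan is to deduce the identity from \cref{prop:dualrep} together with the two basic properties of the conjugate operation recorded in \cref{lemma:barnotation}, applied to the $\cC$-C$^*$-algebra $(B,\beta,\fv)$: namely, for $\eta\in\cL(B,\beta(X))$ and $x,y\in\cM(B)$ one has $\overline{x\rhd\eta\lhd y}=y^*\rhd\overline{\eta}\lhd x^*$ and $\overline{\overline{\eta}}=\beta(\mu_X)\circ\eta$. It is worth noting at the outset that every conjugate appearing in the statement is taken with respect to $(B,\beta,\fv)$, since $\phi_X(\xi)$ and its iterated conjugates are all operators into the modules $\beta(\cdot)$; keeping this straight is the only real bookkeeping point.

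First I would rewrite the argument of the outer conjugate on the right-hand side. By \cref{prop:dualrep}, $\phi_{\overline{X}}(\overline{\xi})=\overline{\phi_X(\xi)}$, so that argument is $y\rhd\overline{\phi_X(\xi)}\lhd x^*$. Applying $\overline{a\rhd\eta\lhd b}=b^*\rhd\overline{\eta}\lhd a^*$ with $\eta=\overline{\phi_X(\xi)}$, $a=y$ and $b=x^*$ turns the outer conjugate into $x\rhd\overline{\overline{\phi_X(\xi)}}\lhd y^*$, and the second property identifies $\overline{\overline{\phi_X(\xi)}}$ with $\beta(\mu_X)\circ\phi_X(\xi)$. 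Hence the right-hand side of the claimed equality equals $\beta(\mu_X^{-1})\circ\bigl(x\rhd(\beta(\mu_X)\circ\phi_X(\xi))\lhd y^*\bigr)$.

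Second I would observe that $\beta(\mu_X)\in\Hom(\beta(X),\beta(\overline{\overline{X}}))$ is in particular a $B$-bimodule map, so by the description of the $\cM(B)$-actions on $\cL(B,\cdot)$ recalled in \cref{sect: Prelim} (using naturality of the canonical isomorphisms $l$), post-composition with $\beta(\mu_X)$ commutes both with $x\rhd(-)$ and with $(-)\lhd y^*$; thus $x\rhd(\beta(\mu_X)\circ\phi_X(\xi))\lhd y^*=\beta(\mu_X)\circ\bigl(x\rhd\phi_X(\xi)\lhd y^*\bigr)$. Finally, since $\beta$ is a functor and $\mu_X$ is a unitary isomorphism, $\beta(\mu_X^{-1})\circ\beta(\mu_X)=\beta(\id_X)=\id_{\beta(X)}$, and composing everything gives $x\rhd\phi_X(\xi)\lhd y^*$, as claimed. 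I do not expect a genuine obstacle: apart from the bookkeeping of which $\cC$-C$^*$-algebra the conjugates are taken over, the only thing to check is the (immediate) fact that the image under $\beta$ of the $\cC$-morphism $\mu_X$ intertwines the ambient $\cM(B)$-actions.
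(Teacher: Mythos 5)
Your proof is correct and follows essentially the same route as the paper: apply \cref{prop:dualrep} to rewrite $\phi_{\overline{X}}(\overline{\xi})$ as $\overline{\phi_X(\xi)}$, use the anti-linear/anti-multiplicative property and the involutivity formula $\overline{\overline{\eta}}=\beta(\mu_X)\circ\eta$ from \cref{lemma:barnotation}, and then commute $\beta(\mu_X)$ past the $\cM(B)$-actions. The only cosmetic difference is that you spell out the commutation with both the left and right actions (the right one being trivial associativity of composition), whereas the paper only comments on the left action.
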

\begin{proof}
It follows from \cref{lemma:barnotation,prop:phiXmaps,prop:dualrep} that
 \begin{align*}
     \overline{y\rhd \phi_{\overline{X}}(\overline{\xi})\lhd x^*}
     &=x\rhd \overline{\overline{\phi_{X}(\xi)}}\lhd y^*\\
     &=x\rhd (\beta(\mu_X)\circ \phi_{X}(\xi) )\lhd y^*\\
     &=\beta(\mu_X)\circ (x\rhd \phi_{X}(\xi) \lhd y^*)
 \end{align*}   
 where in the last line we have used that $\beta(\mu_X)$ is left $B$-linear and bounded, so commutes with the left action of $\cM(B)$.
\end{proof}

\subsection{Paschke dual type algebras}

This subsection introduces $\cC$-equivariant variants of the classical Paschke dual construction commonly appearing in the study of $\KK$-theory (see for example \cite[Definition 5.1.1]{HigsonRoe}, \cite[Section 3]{Tho01}, \cite[Section 6]{THO05}).
\begin{rmk}\label{rmk: CommutantNotation}
    Let $E$ be an $A$-$A$-correspondence. 
    For $\xi\in \cL(A,E)$ and $a\in \cM(A)$, we write $[a,\xi]\coloneqq a\rhd \xi - \xi\lhd a \in \cL(A,E)$ and $[\xi,a]\coloneqq -[a,\xi]$. 
    Also, for $u\in \cU(\cM(A))$, we write $\Ad(u)(\xi)\coloneqq u\rhd \xi\lhd u^* \in \cL(A,E)$. 
    For a subspace $S\subset E$, we denote by $[a,S]=\{ [a, \xi] \,|\, \xi\in S\}$ for $a\in A$. 
\end{rmk}
\begin{lemma}\label{lem:fD*alg}
Let $A$ be a $\cC$-C$^*$-algebra and for $X\in \cC$ let $D_X\subset \cL(A,\alpha(X))$ be a subspace such that $\overline{D_X}\subset D_{\overline{X}}$. 
Then 
\[\fD_D \coloneqq \{ a\in \cM(A) \,|\, [a,D_X]\subset \cK(A,\alpha(X)), \forall X\in\cC \}\]
is a unital C$^*$-subalgebra of $\cM(A)$. %\footnote{For an $A$-$A$ C$^*$-correspondence $E$ and a subspace $S\subset E$ we denote by $[a,S]=a\rhd \xi-\xi\lhd a$ for $a\in A$ and $\xi\in D$.}
\end{lemma}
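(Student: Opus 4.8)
The plan is to check the defining properties of a unital $C^*$-subalgebra of $\cM(A)$ in turn: unitality, being a linear subspace, closure under products, norm-closedness, and closure under the adjoint — noting that only the last of these uses the hypothesis $\overline{D_X}\subset D_{\overline{X}}$. Two elementary facts about the $\cM(A)$-$\cM(A)$-bimodule $\cL(A,\alpha(X))$ will be used repeatedly: the subspace $\cK(A,\alpha(X))$ is invariant under the left action $a\rhd(-)$ (check on rank-one operators, using $a\rhd|\xi\rangle\langle\eta|=|a\rhd\xi\rangle\langle\eta|$, then pass to the norm closure as $a\rhd(-)$ is bounded), and under the right action by $\cM(A)=\cL(A)$ (the ideal property of compact operators between Hilbert modules, which moreover gives $\cL(\alpha(Y),\alpha(X))\circ\cK(A,\alpha(Y))\subset\cK(A,\alpha(X))$ for any $Y\in\cC$).

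Granting these, unitality holds since $[1,\xi]=0\in\cK(A,\alpha(X))$; $\fD_D$ is a linear subspace since $a\mapsto[a,\xi]$ is linear and $\cK(A,\alpha(X))$ is a subspace; and $\fD_D$ is norm-closed since $a\mapsto[a,\xi]$ is bounded, of norm at most $2\|\xi\|$, while $\cK(A,\alpha(X))$ is norm-closed. For closure under products I would use the Leibniz identity $[ab,\xi]=a\rhd[b,\xi]+[a,\xi]\lhd b$, valid for all $a,b\in\cM(A)$ and $\xi\in\cL(A,\alpha(X))$ because the two $\cM(A)$-actions on $\cL(A,\alpha(X))$ commute; if $a,b\in\fD_D$ then both summands lie in $\cK(A,\alpha(X))$ by the bimodule invariance above, so $[ab,D_X]\subset\cK(A,\alpha(X))$ for every $X$.

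The crux is closure under $a\mapsto a^*$, and this is where duality enters. Fix $a\in\fD_D$, $X\in\cC$, and $\xi\in D_X$; we must show $[a^*,\xi]\in\cK(A,\alpha(X))$. Since the conjugation $\eta\mapsto\overline\eta$ of \cref{lemma:barnotation} is antilinear and satisfies $\overline{x\rhd\eta\lhd y}=y^*\rhd\overline\eta\lhd x^*$, one computes
\[
\overline{[a^*,\xi]}=\overline{a^*\rhd\xi}-\overline{\xi\lhd a^*}=\overline\xi\lhd a-a\rhd\overline\xi=-[a,\overline\xi].
\]
By hypothesis $\overline\xi\in\overline{D_X}\subset D_{\overline X}$, and $a\in\fD_D$ then forces $[a,\overline\xi]\in\cK(A,\alpha(\overline X))$, so $\overline{[a^*,\xi]}\in\cK(A,\alpha(\overline X))$. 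Applying \cref{lemma:barnotation} once more — now with $\overline X$ in place of $X$ — the conjugation restricts to a bijection $\cK(A,\alpha(\overline X))\to\cK(A,\alpha(\overline{\overline X}))$, so $\overline{\overline{[a^*,\xi]}}\in\cK(A,\alpha(\overline{\overline X}))$. But $\overline{\overline{[a^*,\xi]}}=\alpha(\mu_X)\circ[a^*,\xi]$ with $\alpha(\mu_X)$ a unitary isomorphism, so precomposing with $\alpha(\mu_X)^{-1}$ and using the ideal property yields $[a^*,\xi]\in\cK(A,\alpha(X))$. As $X\in\cC$ and $\xi\in D_X$ were arbitrary, $a^*\in\fD_D$.

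I expect the only genuine care needed is in tracking the variance of the conjugation: remembering that $\overline{\overline\eta}=\alpha(\mu_X)\eta$ rather than $\eta$ itself, and extracting the equivalence ``$T$ compact $\iff$ $\overline T$ compact'' from the one-directional bijection statement in \cref{lemma:barnotation} by invoking it a second time together with the unitarity of $\alpha(\mu_X)$. Everything else is routine Hilbert-module bookkeeping.
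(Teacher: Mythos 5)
Your proof is correct and follows essentially the same route as the paper: the routine checks are handled identically, and the self-adjointness argument is the paper's own computation (apply the conjugation of \cref{lemma:barnotation} to reduce $[a^*,\xi]$ to $[a,\overline{\xi}]$ via $\overline{\xi}\in D_{\overline{X}}$, then undo it using the compact-preserving bijection and the unitarity of $\alpha(\mu_X)$), merely written by barring and un-barring rather than expressing $[a^*,\xi]$ directly as $\alpha(\mu_X^{-1})$ applied to a difference of conjugates.
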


\begin{proof}
A standard check shows that $\fD_D$ is a norm-closed unital subalgebra. To see that $\fD_D$ is self-adjoint, for $a\in \fD_D$, $X\in \cC$, and $\xi\in D_{X}$, using $\overline{\xi}\in D_{\overline{X}}$ and Lemma~\ref{lemma:barnotation}, we see that
\begin{align*}
[a^*,\xi] &= a^*\rhd \xi - \xi\lhd a^* = \alpha(\mu_X^{-1}) \Bigl( \overline{\overline{\xi} \lhd a} - \overline{a \rhd \overline{\xi}} \Bigr) \\
&\in \alpha(\mu_X^{-1}) \circ \cK(A,\alpha(\overline{\overline{X}}))=\cK(A,\alpha(X)). 
\qedhere\end{align*}
\end{proof}

\begin{defn}\label{defn: DPhi}
For $\cC$-C$^*$-algebras $(A,\alpha,\fu)$, $(B,\beta,\fv)$ and a $\cC$-cocycle representation $\phi\in\Hom^\cC((\alpha,\fu),(\beta,\fv))$, we define a unital C$^*$-subalgebra of $\cM(B)$ by
\[\fD_{\phi}\coloneqq\{ x\in \cM(B) \,|\, [x,\phi_X(\alpha(X))]\subset\cK(B,\beta(X)) , X\in\cC \}. \]
\end{defn}
Note that this is a well-defined C$^*$-subalgebra by \cite[Lemma 3.7]{ARKIKU23}, which is also a particular case of \cref{lem:fD*alg} for $D_X\coloneqq \phi_X(\alpha(X))$ in view of \cref{prop:dualrep} above. 
\begin{rmk}\label{rmk: PaschkeDualGroup}
If $(A,\alpha)$ and $(B,\beta)$ are actions of a countable discrete group $G$, and $(\phi,\mathbbm{u})$ is a cocycle representation in the sense of \cite{cocyclecategszabo}, then \[\fD_{\phi}\coloneqq\{ x\in \cM(B) \,|\,\beta_g(x)\mathbbm{u}_g^*\phi(a)-\mathbbm{u}_g^*\phi(a)x\in B, \ g\in G, \ a\in A\}.\] Putting $g=1$, we see that $[x,\phi(A)]\subseteq B$, so we get that \[\fD_{\phi}\coloneqq\{ x\in \cM(B) \,|\,(\Ad(\bu_g)\beta_g(x)-x)\phi(a)\in B, \ g\in G, \ a\in A\}.\] 
\end{rmk}

%For a cocycle representation $\phi\colon (A,\alpha,\fu)\rightarrow (B,\beta, \fv)$ we write $\overline{\phi}_X(\xi)=\overline{\phi_X(\xi)}\in \cL(B,\beta(\overline{X}))$ for $X\in\cC$ and $\xi\in \alpha(X)$. 

We finish this subsection with a lemma about approximate units that will play a key role in Sections \ref{sec:absorbing} and \ref{sect: AsymptEquiv}. Essentially, it is the $\cC$-equivariant version of \cite[Lemma 1.14]{KA88} and  follows from \cite[Lemma B.2]{ARKIKU23}. We shall include its proof for the convenience of the reader.

\begin{lemma}\label{lem:approxunit}
    Let $(B,\beta,\fv)$ be a $\sigma$-unital $\cC$-C$^*$-algebra for a unitary tensor category $\cC$ with countably many isomorphism classes of simple objects and $D_X\subset\cL(B,\beta(X))$ be a separable closed linear subspace for each $X\in\Irr(\cC)$. Then there exists an increasing approximate unit $e_n\in B$ such that 
    \begin{enumerate}
        \item $\|[e_n,\xi]\|\longrightarrow 0$ for all $X\in \Irr(\cC)$, $\xi\in D_X$. 
        \item $e_n\rhd T\longrightarrow T$ and $T\lhd e_n\longrightarrow T$ for all $T\in \cK(B,\beta(X))$, $X\in \Irr(\cC)$.
    \end{enumerate}
\end{lemma}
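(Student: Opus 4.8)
The plan is to adapt the classical diagonal argument for approximate units (as in \cite[Lemma 1.14]{KA88}) to the $\cC$-equivariant setting, using the fact that $\Irr(\cC)$ is countable to reduce from all objects of $\cC$ to a countable bookkeeping problem. First I would fix a countable enumeration $\Irr(\cC) = \{X_1, X_2, \dots\}$ and, for each $k$, a countable dense sequence $(\xi^{(k)}_j)_{j \in \bN}$ in the unit ball of the separable space $D_{X_k}$, together with a countable dense subset of the separable space $\cK(B,\beta(X_k))$ (recall $\cK(B,\beta(X_k)) \cong \beta(X_k)$ is separable when $B$ is $\sigma$-unital and $\cC$ is a unitary tensor category, cf.\ the remark after \cref{defn: categaction}; if $B$ is merely $\sigma$-unital one works with a countable set whose closed span contains the relevant compacts). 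Let $(b_m)_{m \in \bN}$ be a sequence exhausting a strictly positive element's functional calculus, or more simply start from any countable approximate unit of the $\sigma$-unital algebra $B$.

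The heart of the argument is an application of \cite[Lemma B.2]{ARKIKU23} (equivalently, the quasicentral approximate unit statement of \cite[Lemma 1.4]{Ka80} in Hilbert-module form): one wants a quasicentral approximate unit for $B$ relative to the countable family of ``coefficients'' $\{\xi^{(k)}_j\}$. Concretely, at stage $n$ I would invoke that lemma with the finite set $F_n = \{\xi^{(k)}_j : k,j \leq n\}$ to produce an element $e_n \in B_1^+$ that (i) satisfies $\|[e_n, \xi]\| < 1/n$ for all $\xi \in F_n$, (ii) satisfies $\|e_n \rhd T - T\| < 1/n$ and $\|T \lhd e_n - T\| < 1/n$ for the finitely many $T$ in the chosen dense subsets at level $n$, and (iii) dominates $e_{n-1}$ and the $n$-th element of a fixed approximate unit of $B$, using convexity/maximality so that $(e_n)$ is increasing. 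The existence of such $e_n$, increasing and simultaneously $1/n$-central for $F_n$ and $1/n$-absorbing on the compacts, is exactly what \cite[Lemma B.2]{ARKIKU23} provides (it is the bimodule analogue of Kasparov's result, and the constraint that it works for $\cL(B,\beta(X))$-valued commutators is built in there).

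Having constructed $(e_n)$, I would verify the two claimed conclusions by a density/$\varepsilon$-argument. For (1): given $X \in \Irr(\cC)$, say $X = X_k$, and $\xi \in D_{X_k}$ with $\|\xi\| \leq 1$, pick $j$ with $\|\xi - \xi^{(k)}_j\| < \varepsilon$; then for $n \geq \max(j,k)$ one has $\|[e_n,\xi]\| \leq \|[e_n, \xi^{(k)}_j]\| + 2\|\xi - \xi^{(k)}_j\| < 1/n + 2\varepsilon$, using $\|e_n\| \leq 1$, so $\limsup_n \|[e_n,\xi]\| \leq 2\varepsilon$, and $\varepsilon$ was arbitrary. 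For (2): given $T \in \cK(B,\beta(X_k))$, approximate $T$ within $\varepsilon$ by an element $T'$ of the chosen dense subset; for large $n$, $\|e_n \rhd T' - T'\| < 1/n$, hence $\|e_n \rhd T - T\| \leq \|e_n \rhd (T - T')\| + \|e_n \rhd T' - T'\| + \|T' - T\| < 2\varepsilon + 1/n$, and similarly on the right. Homogeneity scaling handles the general (unnormalized) $\xi$.

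The main obstacle — and the only genuinely non-routine point — is ensuring that the quasicentrality from \cite[Lemma B.2]{ARKIKU23} can be achieved \emph{simultaneously and increasingly} over the growing finite sets $F_n$, i.e.\ that one may take the approximate unit increasing while keeping the commutator bounds. This is handled by the standard Kasparov trick: apply the lemma to each $F_n$ to get a net, pass to the convex hull and use that $[\,\cdot\,,\xi]$ is linear and the compact-absorption conditions are preserved under the order structure, then extract an increasing sequence dominating both $e_{n-1}$ and a fixed approximate identity; the countability of $\Irr(\cC)$ is precisely what makes the ``diagonal'' over $(k,j)$ work so that every $X \in \Irr(\cC)$ and every $\xi \in D_X$ is eventually captured. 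Everything else is the routine $\varepsilon/2$ bookkeeping sketched above.
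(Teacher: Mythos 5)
Your proposal is correct in substance and rests on the same underlying idea as the paper (reduce to countably many conditions using the countability of $\Irr(\cC)$ and the separability of each $D_X$, then run a quasicentral-approximate-unit argument), but the execution is genuinely different. You outsource the analytic core to \cite[Lemma B.2]{ARKIKU23} applied to growing finite sets, followed by a convex-hull/diagonal extraction to make the sequence increasing. The paper instead gives a self-contained argument: it encodes \emph{all} countably many conditions at once into a single bounded linear map $\Phi\colon\cM(B)\to (c_0(\bN)\otimes A)\oplus A$ (choosing the $\xi_n$ with $\|\xi_n\|\to 0$ so that the commutator data lands in a $c_0$-sum, and using a strictly positive element of $A=\bigoplus_X\cK(\beta(X)\oplus B)$ to encode condition (2)), observes $\Phi(1)=0$ and that $1$ lies in the strict closure of the convex approximate unit $E_b=\{e\in B \,|\, b\le e,\ \|e\|<1\}$, and then uses Hahn--Banach to replace the weak closure by the norm closure; the increasing sequence is obtained by taking $b=e_{n-1}$ at each inductive step. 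Your route is shorter on paper but leans on the cited lemma plus a somewhat hand-waved extraction ("the compact-absorption conditions are preserved under the order structure"); the paper's route buys a single clean application of convexity that produces the increasing sequence and all approximation conditions simultaneously. Both are valid; indeed the paper itself remarks that the lemma "follows from \cite[Lemma B.2]{ARKIKU23}" before giving its direct proof.

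One point needs repair. Your parenthetical claim that $\cK(B,\beta(X_k))$ is separable is unjustified: the cited remark gives countable density of $\alpha(X)$ only when the underlying algebra is \emph{separable}, whereas here $B$ is merely $\sigma$-unital, and your fallback ("a countable set whose closed span contains the relevant compacts") is vacuous for a non-separable space. The correct fixes are either the paper's (impose convergence on a strictly positive element of $\cK(\beta(X)\oplus B)$, which forces strict convergence on the whole corner), or the observation that condition (2) is automatic for \emph{any} approximate unit of $B$: by non-degeneracy and Cohen factorisation every $T\in\cK(B,\beta(X))$ factors as $T=\beta_X(b)T'=T''\lhd b'$ with $b,b'\in B$, so $\|e_n\rhd T-T\|\le\|e_nb-b\|\,\|T'\|\to0$ and similarly on the right. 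With that adjustment your argument goes through.
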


\begin{proof}
    For $n\in\bN$, take $X_n\in\Irr(\cC)$ and $\xi_n \in D_{X_n}$ such that $\|\xi_n\|\xrightarrow{n\to\infty}0$ and the linear span of $\{ \xi_n \,|\, n\in\bN, X_n=X \}$ is norm-dense in $D_X$ for all $X\in\Irr(\cC)$. Let $A\coloneqq \bigoplus_{X\in\Irr(\cC)} \cK(\beta(X)\oplus B)$ and $a=(a_X)_{X\in\Irr(\cC)}\in A$ be a strictly positive contraction which exists as $B$ is $\sigma$-unital. By writing $\beta_X\colon B\to \cK(\beta(X))$ for the non-degenerate $^*$-homomorphism of the left action, consider the bounded linear map 
    $\Phi\colon \cM(B) \to (c_0(\bN)\otimes A) \oplus A$ defined by \begin{align*}
        &\Phi(x)\coloneqq \left(\left(\begin{array}{cc}
             0& \delta_{X,X_n}[x,\xi_n] \\
             0& 0
        \end{array}\right)_{X\in\Irr(\cC),n\in\bN} , (a_X-\beta_{X\oplus1_\cC}(x)a_X)_{X\in\Irr(\cC)} \right) . 
    \end{align*}
    It follows from the non-degeneracy of each $\beta_{X\oplus1_\cC}$ that $\Phi$ is strictly continuous on the unit ball of $\cM(B)$. 
    Recall that for any positive contraction $b\in B$ such that $\|b\|<1$, the set $E_b\coloneqq \{ e\in B \,|\, b\leq e, \|e\|<1 \}$ is an approximate unit of $B$. 
    Since $\Phi(1_{\mathcal{M}(B)})=0$ and any state on a C$^*$-algebra is strictly continuous on bounded sets,\footnote{This is a consequence of \cite[Corollary 5.7]{Hilbertmodules}. Note that strictness in this result is implied from the fact that the image of an approximate unit under a state converges to 1 (see \cite[Theorem 3.3.3]{MU90}).} it follows that $0$ is contained in the closure $\Phi(E_b)$ with respect to the weak topology of the Banach space $(c_0(\bN)\otimes A)\oplus A$ for any positive $b\in B$ with $\|b\|<1$. Moreover, as $E_b$ is convex it follows from the Hahn--Banach separation theorem that the weak closure and norm closure of $\Phi(E_b)$ coincide, so $0$ is contained in the norm closure of $\Phi(E_b)$.
    Therefore, starting from $e_0=0$, we can inductively construct a sequence of positive contractions $e_n\in B$ such that $\|e_n\|<1$, $e_{n-1}\leq e_n$, and $\|\Phi(e_n)\|<n^{-1}$. 
    Now it is routine to check this family $\{ e_n \}_{n\in \bN}$ is the desired approximate unit. 
\end{proof}

%We typically apply this lemma when $D_X$ contains $\phi_X(\alpha(X))$ for a cocycle representation $\phi\colon (A,\alpha,\fu)\to (B,\beta,\fv)$. 

\section{Cuntz--Thomsen pictures for \texorpdfstring{$\KK^\cC$}{KKC}}\label{sect:Cuntzpicture}

The $\cC$-equivariant Kasparov KK-theory was introduced in \cite{ARKIKU23}. In this section, we introduce a Cuntz--Thomsen picture for $\cC$-equivariant KK-theory. We start by recalling the crucial definitions of the Fredholm module picture for $\KK^{\cC}$. 
\begin{defn}\label{defn:Kasparovmodules}
    Let $(A,\alpha,\fu)$ be a separable $\cC$-C$^*$-algebra and $(B,\beta,\fv)$ be a $\sigma$-unital $\cC$-C$^*$-algebra. 
    \begin{enumerate}
    \item 
    A $\cC$-Kasparov $A$-$B$-bimodule is a quadruple $(E,\phi,\bv,F)$, where $(E,\phi,\bv)$ is a $\cC$-$A$-$B$-correspondence  of the form $(E_+,\phi_+,\bu_+)\oplus (E_-,\phi_-,\bv_-)$ and $F\in \cL(E)$ is an oddly graded operator with respect to $E=E_+\oplus E_-$ such that \[(F^2-1)\phi(A),\ (F-F^*)\phi(A)\subset \cK(E)\] and $[F,T_\xi]\coloneqq (F\otimes \id_{\beta(x)})\bv_X(\xi\otimes(-)) - \bv_X(\xi\otimes F(-))\in \cK(E,E\otimes_B\beta(X))$ for all $X\in\cC$ and $\xi\in\alpha(X)$.
    \item 
    We write $\bfE^\cC(A,B)$ for the class of $\cC$-Kasparov $A$-$B$-bimodules and $\simeq$ for the equivalence relation on $\bfE^\cC(A,B)$ generated by $\cC$-equivariant homotopy and $\cC$-equivariant unitary equivalence (i.e., $E_0,E_1\in\bfE^\cC(A,B)$ satisfy $E_0\simeq E_1$ if and only if there is some $E\in\bfE^\cC(A,B\otimes C[0,1])$ whose evaluations at $0$ and $1$ are $\cC$-equivariantly unitary equivalent to $E_0$ and $E_1$, respectively). Then the $\cC$-equivariant Kasparov group is $\KK^\cC(A,B)\coloneqq \bfE^\cC(A,B)/{\simeq}$.
    \end{enumerate}
\end{defn}
In the following, we shall write $\bfE^\cC((\alpha,\fu),(\beta,\fv))$ and $\KK^{\cC}((\alpha,\fu),(\beta,\fv))$ to emphasise the actions of $\cC$.

\begin{defn}\label{defn:Cuntzpair} Let $(A,\alpha,\fu)$ and $(B,\beta,\fv)$ be two $\cC$-C$^*$-algebras with $A$ separable and $B$ $\sigma$-unital. We call $\left(\phi,\psi\right)$ a $\cC$-Cuntz pair from $(\alpha,\fu)$ to $(\beta,\fv)$ if $\phi,\psi\in \Hom^{\cC}((\alpha,\fu),(\beta^\rs,\fv^\rs))$ and 
\[\phi_X(\xi)-\psi_X(\xi)\in \cK(B^\rs,\beta^\rs(X)),\
 \forall \ \xi\in \alpha(X), \ X\in \cC.\] 
A $\cC$-Cuntz pair of the form $\left(\phi,\phi\right)$ will be called \emph{degenerate}. 
\end{defn}

\begin{rmk}\label{rmk: CuntzPairsGroups}
Let $(A,\alpha)$ and $(B,\beta)$ be actions of a countable discrete group $G$, and $(\phi,\mathbbm{u}),(\psi,\mathbbm{v})$ be cocycle representations in the sense of \cite{cocyclecategszabo}. Then $((\phi,\mathbbm{u}^*),(\psi,\mathbbm{v}^*))$ is a $\Hilb(G)^{\op}$-Cuntz pair from $(\alpha,1)$ to $(\beta,1)$ if \[\mathbbm{u}_g^*\phi(a)-\mathbbm{v}_g^*\psi(a)\in B\otimes \bK\] for any $g\in G$ and $a\in A$. By taking adjoints this is precisely that
\[\phi(a)\bu_g-\psi(a)\bv_g\in B\otimes \bK\]
for all $a\in A$ and $g\in G$.
\end{rmk}
%\todo{I feel we should discuss homotopy more carefully because usually we check it by constructing a strictly continuous path of Cuntz pairs $(\phi_X^{(t)},\psi_X^{(t)})$. We should maybe discuss how a strictly continuous path of cocycle representations induces a genuine cocycle representation $\Phi_X\colon (A,\alpha)\rightarrow (B^\rs,\beta^\rs)$ maybe referencing the conditions in Proposition \ref{prop:phiXmaps}. SG}

We denote the collection of $\cC$-Cuntz pairs from $(\alpha,\fu)$ to $(\beta,\fv)$ by $\bE^{\cC}((\alpha,\fu),(\beta,\fv))$. We now define an equivalence relation on $\bE^{\cC}((\alpha,\fu),(\beta,\fv))$. For elements $x_0,x_1\in\bE^{\cC}((\alpha,\fu),(\beta,\fv))$, we write $x_0\simeq x_1$ if there exists $y\in \bE^{\cC}((\alpha,\fu),(\beta\otimes \id_{C[0,1]},\fv\otimes1))$ which restricts to $x_0$ upon evaluation at $0$ and to $x_1$ upon evaluation at $1$. i.e. that if $y=(\Phi,\Psi)$ then $(\ev_0\circ \Phi,\ev_0\circ \Psi)=x_0$ and $(\ev_1\circ \Phi,\ev_1\circ \Psi)=x_1$ where $\ev_t\colon B\otimes C[0,1]\rightarrow B$ for $t\in [0,1]$ is the non-degenerate $\cC$-cocycle representation discussed in Example \ref{example: cocyclerep} (in fact it is a cocycle morphism in the sense of \cite[Definition A]{intertwining}). 

We now define an additive operation on the set of $\cC$-Cuntz pairs from $(\alpha,\fu)$ to $(\beta,\fv)$. Let $\phi,\psi\in\Hom^{\cC}((\alpha,\fu),(\beta^\rs,\fv^\rs))$ be two $\cC$-cocycle representations and let $s_1,s_2\in \cM(B^\rs)$ be two isometries which generate a copy of $\cO_2$. We may define a $\cC$-cocycle representation \[\phi\oplus_{s_1,s_2}\psi\coloneqq s_1\rhd\phi\lhd s_1^*+s_2\rhd\psi\lhd s_2^*.\] Moreover, the $\cC$-cocycle representation $\phi\oplus_{s_1,s_2}\psi$ does not depend on the choice of isometries $s_1$ and $s_2$ up to unitary equivalence. Indeed, if the pairs of isometries $s_1,s_2$ and $t_1,t_2$ both generate a copy of $\mathcal{O}_2$ in $\cM(B^\rs)$, the unitary $u=t_1s_1^*+t_2s_2^*\in \cM(B^\rs)$ induces a unitary equivalence between the $\cC$-cocycle representations $\phi\oplus_{s_1,s_2}\psi$ and $\phi\oplus_{t_1,t_2}\psi$.

\begin{defn}\label{defn:CuntzDirectSum}
 Let $(A,\alpha,\fu)$ and $(B,\beta,\fv)$ be $\cC$-C$^*$-algebras with $A$ separable and $B$ $\sigma$-unital, and let $(\phi_1,\psi_1), (\phi_2,\psi_2) \in \bE^\cC((\alpha,\fu),(\beta,\fv))$. Let $s_1,s_2\in\mathcal{M}(B^\rs)$ be two isometries generating a copy of $\mathcal{O}_2$ in $\cM(B^\rs)$. Then, we define the Cuntz sum of $(\phi_1,\psi_1)$ and $(\phi_2,\psi_2)$ by \[(\phi_1,\psi_1)\oplus_{s_1,s_2} (\phi_2,\psi_2)\coloneqq (\phi_1\oplus_{s_1,s_2}\phi_2, \psi_1\oplus_{s_1,s_2}\psi_2).\]
\end{defn}
\begin{rmk}
If $(A,\alpha)$ and $(B,\beta)$ are actions of a countable discrete group $G$, with $B$ a stable C$^*$-algebra, and $(\phi,\bu),(\psi,\bv)\colon (A,\alpha)\rightarrow (B,\beta)$ are cocycle representations in the sense of \cite{cocyclecategszabo}, then their $\Hilb(G)^{\op}$-Cuntz sum $(\phi,\bu^*)\oplus_{s_1,s_2}(\psi,\bv^*)$ with respect to $s_1,s_2\in \cM(B)$ corresponds to the cocycle representation $(\phi\oplus_{s_1,s_2}\psi,\bu\oplus_{s_1,s_2}\bv)$ with 
\begin{equation}\phi\oplus_{s_1,s_2}\psi(a)=s_1\phi(a)s_1^*+s_2\psi(a)s_2^*,\end{equation} 
\begin{equation}\label{eqn:nonfixedcuntzsum}(\bu\oplus_{s_1,s_2}\bv)_g=s_1\bu_g\beta_g(s_1^*)+s_2\bv_g\beta_g(s_2^*).
\end{equation} 
Which is precisely the Cuntz sum as in \cite[Definition 1.6]{GASZ22} when $s_1,s_2\in \cM(B)^{\beta}$.
\par The reason that the authors may assume that the isometries used to form a Cuntz sum are in the fixed point algebra is that by \cite[Proposition 1.4]{DynamicalKP} the action $\beta$ is cocycle conjugate to $\beta\otimes \id_{\bK}$. Let $(\Phi,\bU)\colon (B,\beta)\rightarrow (B\otimes\bK,\beta\otimes \id_{\bK})$ be a cocycle conjugacy and $s_1,s_2\in \cM(\bK)\subset \cM(B\otimes \bK)^{\beta\otimes \id_{\bK}}$. Then one may pullback the Cuntz sum $(\Phi,\bU)\circ(\phi,\bu)\oplus_{s_1,s_2}(\Phi,\bU)\circ (\psi,\bv)$ to a cocycle representation from $(A,\alpha)$ to $(B,\beta)$ by postcomposing with $(\Phi^{-1},\Phi^{-1}(\bU))$; we denote the resulting cocycle representation by $(\varphi,\bw)$. It is a straightforward computation that $\varphi(a)=\Phi^{-1}(s_1)\phi(a)\Phi^{-1}(s_1^*)+\Phi^{-1}(s_2)\phi(a)\Phi^{-1}(s_2^*)$ for all $a\in A$ and that $\bw_g=\Phi^{-1}(s_1)\bu_g\Phi^{-1}(\bU_gs_1^*\bU_g^*)+\Phi^{-1}(s_2)\bv_g\Phi^{-1}(\bU_gs_2^*\bU_g^*)$. As $(\Phi,\bU)$ is a cocycle representation and $s_1,s_2$ are fixed by $\beta\otimes \id_{\bK}$ it follows that $\beta_g(\Phi^{-1}(s_i))=\Ad(\Phi^{-1}(\bU_g))\Phi^{-1}(s_i)$ for $i=1,2$ and so $(\varphi,\bw)$ is precisely $(\phi\oplus_{\Phi^{-1}(s_1),\Phi^{-1}(s_2)}\psi,\bu\oplus_{\Phi^{-1}(s_1),\Phi^{-1}(s_2)}\bv)$ as in \ref{eqn:nonfixedcuntzsum}.

\end{rmk}
\begin{rmk}\label{rmk:sums}
    Let $s_1,s_2\in \cM(B^\rs)$ be two isometries which generate a copy of $\cO_2$.
    In the alternative picture, where $\cC$-cocycle representations are given by pairs $(\phi,\bu),(\psi,\bv)\colon (A,\alpha,\fu)\rightarrow (B^\rs,\beta^\rs,\fv^\rs)$, their direct sum $(\phi,\bu)\oplus_{s_1,s_2}(\psi,\bv)$ coincides with the pair $(\Ad(s_1)\phi+\Ad(s_2)\psi,(s_1\otimes \id_{\beta(X)})\bu_X (\id_{\alpha(X)}\otimes s_1^*)+(s_2\otimes \id_{\beta(X)})\bv_X (\id_{\alpha(X)}\otimes s_2^*))$. Where for any $X\in \cC$, 
    \begin{align*}
    & g_X\circ\left(\bu_X\oplus \bv_X\right)\circ f_X\\
    &= (s_1\otimes \id_{\beta(X)})\bu_X (\id_{\alpha(X)}\otimes s_1^*)+(s_2\otimes \id_{\beta(X)})\bv_X (\id_{\alpha(X)}\otimes s_2^*)
    \end{align*} with
\begin{align*}
f_X\colon \alpha(X)\otimes {}_{\Ad(s_1)\phi+\Ad(s_2)\psi} B^s\rightarrow \alpha(X)\otimes ({}_\phi B^s\oplus {}_\psi B^s)
\end{align*} given by $f_X(\xi\otimes b)= \xi\otimes (s_1^*b,s_2^*b)$ for any $\xi\in\alpha(X)$ and any $b\in B^\rs$ and
\begin{align*}
    g_X\colon ({}_\phi B^s\oplus {}_\psi B^s)\otimes \beta(X)\rightarrow {}_{\Ad(s_1)\phi+\Ad(s_2)\psi} B^s\otimes \beta(X)
\end{align*} given by $g_X\left((b,b')\otimes \eta\right)= (s_1b+s_2b')\otimes \eta$
for any $b,b'\in B^s$ and any $\eta\in \beta(X)$. 
\end{rmk}
The following lemma follows exactly as in the group equivariant case (see for example \cite[Lemma 1.10]{GASZ22}). In fact, its content is an immediate consequence of Theorem \ref{thm:Cuntzpicture}.
\begin{lemma}\label{lemma: CuntzSumGroup}
 Let $(A,\alpha,\fu)$ and $(B,\beta,\fv)$ be $\cC$-C$^*$-algebras with $A$ separable and $B$ $\sigma$-unital.
\begin{enumerate}[label=\textit{(\roman*)}]
    \item Any degenerate $\cC$-Cuntz pair from $(\alpha,\fu)$ to $(\beta,\fv)$ is homotopic to the zero pair.\label{item: DegHom0}
    \item The set $\bE^\cC((\alpha,\fu),(\beta,\fv))/{\simeq}$ becomes an abelian group when equipped with the Cuntz sum.\label{item: CuntzGrp}
\end{enumerate}
\end{lemma}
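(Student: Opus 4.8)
\textbf{Proof proposal for Lemma~\ref{lemma: CuntzSumGroup}.}

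The plan is to deduce both statements essentially formally from the Cuntz--Thomsen description of $\KK^\cC$ established in Theorem~\ref{thm:Cuntzpicture}, mirroring the classical (and group equivariant) arguments but taking care to use only the structure available in the tensor category setting. For part \ref{item: DegHom0}, I would argue directly. Given a degenerate $\cC$-Cuntz pair $(\phi,\phi)$ with $\phi\in\Hom^\cC((\alpha,\fu),(\beta^\rs,\fv^\rs))$, I want to produce a homotopy inside $\bE^\cC((\alpha,\fu),(\beta\otimes\id_{C[0,1]},\fv\otimes1))$ connecting $(\phi,\phi)$ to the zero pair $(0,0)$. The natural candidate is to reparametrise $\phi$ along $[0,1]$: choose a norm-continuous path of isometries $w\colon[0,1]\to\cM((B\otimes C[0,1])^\rs)$ (or equivalently a continuous path of isometries in $\cM(B^\rs)$) with $w_1=1$ and $w_0w_0^*$ a proper subprojection so that after conjugating we can slide the pair off to $0$; concretely, using an infinite repeat trick one builds a path $\Phi$ on $B\otimes C[0,1]$ with $\ev_1\circ\Phi = \phi$ and $\ev_0\circ\Phi = 0$ by the standard Eilenberg swindle applied to $\phi\oplus\phi\oplus\cdots$, noting that $\phi$ absorbs $0$. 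Since at every $t$ we are taking $(\Phi_t,\Phi_t)$, the difference is identically $0\in\cK$, so this is a genuine path of (degenerate) $\cC$-Cuntz pairs, giving $(\phi,\phi)\simeq(0,0)$. The one subtlety is to verify that all maps involved remain $\cC$-cocycle representations at each $t$ and that naturality/coherence (the conditions of Proposition~\ref{prop:phiXmaps}) are preserved; this is where I expect to lean on Example~\ref{example: cocyclerep} for the building-block cocycle representations $\Ad(v)$ and $\id\otimes\phi$, together with the behaviour of $\fv_{X,Y}$.

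For part \ref{item: CuntzGrp}, the additive structure comes from the Cuntz sum of Definition~\ref{defn:CuntzDirectSum}. I would check the group axioms in the following order. \emph{Well-definedness:} the sum is independent of the choice of $\cO_2$-isometries $s_1,s_2$ up to unitary equivalence (already observed just before Definition~\ref{defn:CuntzDirectSum}), and unitary equivalence implies homotopy of Cuntz pairs via a path of unitaries in $\cM(B^\rs)$ (using that the unitary group of a multiplier algebra is path-connected in the relevant sense, or by the standard rotation trick $\mathrm{diag}(u,1)\sim 1$ in a $2\times2$ corner); one also checks compatibility with the homotopy relation $\simeq$, which is routine since a homotopy of each summand gives a homotopy of the sum using a fixed pair of isometries over $C[0,1]$. \emph{Associativity and commutativity:} both follow from choosing different systems of $\cO_2$-isometries and conjugating by the resulting unitary, exactly as in \cite[Lemma~1.10]{GASZ22}; commutativity uses the flip unitary $s_1s_2^*+s_2s_1^*$. \emph{Neutral element:} the class of the zero pair $(0,0)$ is neutral, because $(\phi,\psi)\oplus(0,0)$ differs from $(\phi,\psi)$ only by compressing to a subprojection $s_1s_1^*$, and one absorbs the complement by an Eilenberg-swindle-type homotopy together with part \ref{item: DegHom0}. \emph{Inverses:} the inverse of $[(\phi,\psi)]$ is $[(\psi,\phi)]$; indeed $(\phi,\psi)\oplus(\psi,\phi) = (\phi\oplus_{s_1,s_2}\psi,\ \psi\oplus_{s_1,s_2}\phi)$, and the rotation homotopy $R_t = \left(\begin{smallmatrix} \cos t & -\sin t\\ \sin t & \cos t\end{smallmatrix}\right)$ acting on the $2$-dimensional $\cO_2$-corner conjugates $\phi\oplus\psi$ to $\psi\oplus\phi$ while fixing the difference in $\cK$, exhibiting $(\phi\oplus\psi,\psi\oplus\phi)$ as homotopic to a degenerate pair, hence to $0$ by \ref{item: DegHom0}.

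The genuinely new point compared to the group case, and the step I expect to be the main obstacle, is that in the tensor category setting a $\cC$-cocycle representation is \emph{not} a single $^*$-homomorphism with a cocycle but rather the whole family $\{\phi_X\}_{X\in\cC}$ of Proposition~\ref{prop:phiXmaps}, so every homotopy or conjugation I write down has to be checked to respect conditions \ref{item:unit}--\ref{item:naturality} simultaneously and continuously in $t$. The rotation and swindle arguments only manipulate the ambient multiplier algebra $\cM(B^\rs)$ (via $\Ad$ of isometries and unitaries, which are cocycle representations by Example~\ref{example: cocyclerep}(1)), so compatibility with the family structure reduces to the functoriality of composition of $\cC$-cocycle representations (Remark~\ref{rmk:composition}) and the explicit description of the Cuntz sum on the level of the $\bu_X$ in Remark~\ref{rmk:sums}; once this bookkeeping is in place the arguments proceed exactly as in the classical case. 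Alternatively, and perhaps more cleanly, one can invoke Theorem~\ref{thm:Cuntzpicture} to transport the whole question to $\KK^\cC((\alpha,\fu),(\beta,\fv))$, where the group structure is already known from \cite{ARKIKU23}, and simply verify that the bijection of Theorem~\ref{thm:Cuntzpicture} intertwines the Cuntz sum with addition in $\KK^\cC$ and sends degenerate pairs to $0$; this is the route I would ultimately take, as it makes both \ref{item: DegHom0} and \ref{item: CuntzGrp} immediate.
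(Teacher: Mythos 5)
Your proposal is correct and lands on exactly the justification the paper gives: the paper offers no written proof, only the two remarks that the lemma ``follows exactly as in the group equivariant case'' (\cite[Lemma 1.10]{GASZ22}) and that ``its content is an immediate consequence of Theorem \ref{thm:Cuntzpicture}'' --- which are precisely your direct route and your preferred route, respectively. The route via Theorem \ref{thm:Cuntzpicture} is legitimate and non-circular: the bijectivity there is proved without using this lemma (the lemma is only invoked afterwards to upgrade the bijection to a group isomorphism), the canonical map of Remark \ref{rmk:canonicalmaps} is additive by construction, and degenerate Cuntz pairs go to degenerate Kasparov modules, so both parts follow by pulling the group structure back along an additive bijection. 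One caveat on your direct argument for part \ref{item: DegHom0}: an Eilenberg swindle on $\phi\oplus\phi\oplus\cdots$ only yields $[\phi,\phi]+[\kappa,\kappa]=[\kappa,\kappa]$ and hence needs the cancellation you are trying to establish, and conjugating by isometries whose range projection stays a fixed proper subprojection does not deform $\phi$ to $0$; the correct mechanism (used in \cite[Lemma 1.10]{GASZ22}) is the half-open strictly continuous paths of isometries $S_2\colon(0,1]\to\cM(B^\rs)$ with $S_2^{(1)}=1$ and $S_2^{(t)}S_2^{(t)*}\to0$ strictly as $t\to0^+$, so that $t\mapsto\Ad(S_2^{(t)})\circ\phi$ extends by $0$ at $t=0$ to a $\cC$-cocycle representation over $B\otimes C[0,1]$ and $(\Ad(S_2^{(t)})\phi,\Ad(S_2^{(t)})\phi)$ is the desired null-homotopy. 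Since you defer to Theorem \ref{thm:Cuntzpicture} anyway, this does not affect the validity of your overall argument.
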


We want to show that the abelian group $\bE^{\cC}((\alpha,\fu),(\beta,\fv))/{\simeq}$ is isomorphic to $\KK^{\cC}((\alpha,\fu),(\beta,\fv))$. To show this we will require an equivalent but slight variation of Definition \ref{defn:Cuntzpair}. Firstly, recall from Proposition \ref{prop:phiXmaps} that the data of a $\cC$-cocycle representation $\phi=(\phi_X)_X\colon(A,\alpha,\fu)\rightarrow (B^\rs,\beta^\rs,\fv^\rs)$ coincides with the data $(\phi,\bu)$ of a $^*$-homomorphism $\phi\colon A\rightarrow \cM(B^\rs)$ and a $\cC$-equivariant structure $\bu$ on the correspondence ${}_{\phi} B^\rs$ with respect to the actions $(\alpha,\fu)$ and $(\beta^\rs,\fv^\rs)$. However, $(\phi,\bu)$ also induces a $\cC$-equivariant structure on the correspondence ${}_\phi B^{\infty}$ through the composition
\begin{equation}\label{eqn:BstoBinf}
\begin{tikzcd}
    \alpha(X)\otimes {}_\phi B^{\infty}\ar{d}{\id_{\alpha(X)}\otimes l_{B^{\infty}}} \arrow[rr, "\widehat{\bu}_X"]& &{}_\phi B^{\infty}\otimes \beta(X)\\
    \alpha(X)\otimes {}_\phi B^\rs\otimes B^{\infty}\ar{r}{\bu_X} & {}_\phi B^\rs\otimes \beta^\rs(X)\otimes B^{\infty}\ar{r} & {}_\phi B^\rs\otimes \beta(X)^{\infty}\ar{u}{}
\end{tikzcd}
\end{equation}
where the last map is the canonical unitary isomorphism $\beta(X)^{\infty}\cong B^{\infty}\otimes \beta(X)$ followed by $l_{B^{\infty}}^*\otimes \id_{\beta(X)}$ and the second to last map is also a unitary isomorphism defined by sending an elementary tensor $(\xi\lhd a\boxtimes T)\otimes (b_n)\in \beta^\rs(X)\otimes B^{\infty}\mapsto (\xi\lhd (a\boxtimes T)(b_n))_{n\in \bN}\in \beta(X)^{\infty}$ (here we have used Cohen factorisation). Similarly, a $\cC$-equivariant structure $\bv$ on ${}_\phi B^{\infty}$ induces a $\cC$-equivariant structure $\widehat{\bv}$ on ${}_\phi B^\rs$ through the composition
\begin{equation}\label{eqn:BinftoBs}
\begin{tikzcd}
    \alpha(X)\otimes {}_\phi B^{s}\ar{d} \arrow[r, mapsto, "\widehat{\bv}_X"]&{}_\phi B^{s}\otimes \beta^{s}(X)\\
    \alpha(X)\otimes {}_\phi B^{\infty}\otimes \overline{B^{\infty}}\ar{r}{\bv_X} & {}_\phi B^{\infty}\otimes \beta(X)\otimes \overline{B^{\infty}}\ar{u} 
\end{tikzcd}
\end{equation}
with $\overline{B^{\infty}}$ the contragredient correspondence of the $B^\rs$--$B$ Morita equivalence $B^{\infty}$. In the above composition we have used the isomorphisms $B^{\infty}\otimes \overline{B^{\infty}}\rightarrow B^\rs$ and $\overline{B^{\infty}}\otimes B^{\infty}\rightarrow B$ given by the left and right valued inner products. A direct computation shows that the two maps defined in \eqref{eqn:BstoBinf} and \eqref{eqn:BinftoBs} are inverse to one another.
\par Finally, we note that a pair $(\phi,\bu),(\psi,\bv)\in \Hom^{\cC}((\alpha,\fu),(\beta^\rs,\fv^\rs))$ forms an $((\alpha,\fu),(\beta,\fv))$-Cuntz-pair if and only if
\begin{equation}\label{eqn:alternativecuntzpair}
    \widehat{\bu}_XT_{\xi}^{\phi}-\widehat{\bv}_XT_{\xi}^{\psi}\in \cK(B^{\infty},\beta(X)^{\infty})
\end{equation}
where we have canonically identified $B^{\infty}\otimes \beta(X)$ with $\beta(X)^{\infty}$ in the codomain. Indeed, using Lemma \ref{lem:compactstensoridentity} we have that
\begin{align*}
    &\phi_X(\xi)-\psi_X(\xi)\in \cK(B^\rs,\beta^\rs(X))\\
    \iff{}& \bu_XT_{\xi}-\bv_XT_{\xi}\in \cK(B^\rs,B^\rs\otimes \beta^{s}(X))\\
    \iff{}& (\bu_XT_{\xi}-\bv_XT_\xi)\otimes \id_{B^{\infty}}\in \cK(B^\rs\otimes B^{\infty},B^\rs\otimes \beta^\rs(X)\otimes B^{\infty})\\
    \iff{}& \widehat{\bu}_XT_{\xi}^{\phi}-\widehat{\bv}_XT_{\xi}^{\psi}\in \cK(B^{\infty},\beta(X)^{\infty}).
\end{align*}
We summarise the above discussion in the following lemma.
\begin{lemma}\label{lem:cuntpairBinf}
    Let $(A,\alpha,\fu)$ and $(B,\beta,\fv)$ be two $\cC$-C$^*$-algebras with $A$ separable and $B$ $\sigma$-unital. The set $\bE^{\cC}((\alpha,\fu),(\beta,\fv))$ is in bijective correspondence with pairs $(\phi,\bu),(\psi,\bv)$ with $\phi,\psi\colon A\rightarrow B^\rs$ and $\bu,\bv$ $\cC$-equivariant structures on ${}_\phi B^{\infty}$ and ${}_\psi B^{\infty}$ respectively, such that
    \begin{equation}
    \bu_XT_{\xi}^{\phi}-\bv_XT_{\xi}^{\psi}\in \cK(B^{\infty},\beta(X)^{\infty}).  
    \end{equation}
     Under this bijection, the operation $\oplus$ is understood in a similar way as in Remark \ref{rmk:sums} and the equivalence relation $\simeq$ descends to the canonical definition of homotopy. 
\end{lemma}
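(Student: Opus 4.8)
\textit{Proof strategy.} The plan is to assemble the statement from the discussion preceding it, isolating the single genuinely categorical point. By \cref{prop:phiXmaps}, a $\cC$-cocycle representation $\phi\colon(A,\alpha,\fu)\to(B^\rs,\beta^\rs,\fv^\rs)$ is the same datum as a pair $(\phi,\bu)$ with $\phi=\phi_{1_\cC}$ a $^*$-homomorphism and $\bu$ a $\cC$-equivariant structure on ${}_\phi B^\rs$. Hence it suffices to produce a bijection between $\cC$-equivariant structures on ${}_\phi B^\rs$ and on ${}_\phi B^\infty$, and then to check that, under this bijection composed with \cref{prop:phiXmaps}, the Cuntz pair condition, the Cuntz sum, and homotopy all transport as asserted.

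First I would verify that the assignment $\bu\mapsto\widehat\bu$ of \eqref{eqn:BstoBinf} sends a $\cC$-equivariant structure on ${}_\phi B^\rs$ to one on ${}_\phi B^\infty$. Every arrow composed in \eqref{eqn:BstoBinf} is one of the natural unitary isomorphisms $l$, the canonical identification $\beta(X)^\infty\cong B^\infty\otimes_B\beta(X)$, or the identification $\beta^\rs(X)\otimes_{B^\rs}B^\infty\cong\beta(X)^\infty$ afforded by the $B^\rs$-$B$ Morita equivalence $B^\infty$; so verifying the pentagon \eqref{cocyclemorphismdiagram} for $\widehat\bu$ reduces to the pentagon for $\bu$ together with naturality of these isomorphisms and the compatibility of the family $\beta^\rs(X)\otimes_{B^\rs}B^\infty\cong B^\infty\otimes_B\beta(X)$ with the coherence cells $\fv^\rs$ and $\fv$. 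Symmetrically, $\bv\mapsto\widehat\bv$ of \eqref{eqn:BinftoBs} produces a structure on ${}_\phi B^\rs$ from one on ${}_\phi B^\infty$, using the isomorphisms $B^\infty\otimes_B\overline{B^\infty}\cong B^\rs$ and $\overline{B^\infty}\otimes_{B^\rs}B^\infty\cong B$. That the two composites are the identity is the ``direct computation'' alluded to above: it is the assertion that $B^\infty$ and $\overline{B^\infty}$ form an inverse pair of imprimitivity bimodules, so that tensoring by one and then the other is naturally unitarily the identity.

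Granting this bijection, the Cuntz pair condition transports exactly as in the chain of equivalences displayed before the statement: since $\phi_X(\xi)=l_{\beta^\rs(X)}^*\bu_X T_\xi^\phi$, the relation $\phi_X(\xi)-\psi_X(\xi)\in\cK(B^\rs,\beta^\rs(X))$ is equivalent to $\bu_X T_\xi^\phi-\bv_X T_\xi^\psi\in\cK(B^\rs,B^\rs\otimes\beta^\rs(X))$, and by \cref{lem:compactstensoridentity} applied to the Morita equivalence $B^\infty$ this holds if and only if the same operator tensored with $\id_{B^\infty}$ is compact, which under the identifications built into \eqref{eqn:BstoBinf} is precisely $\widehat\bu_X T_\xi^\phi-\widehat\bv_X T_\xi^\psi\in\cK(B^\infty,\beta(X)^\infty)$. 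For the Cuntz sum, recall that it is obtained by applying $\oplus_{s_1,s_2}$ to each entry of a pair, for isometries $s_1,s_2\in\cM(B^\rs)$ spanning a copy of $\cO_2$, and that $\oplus_{s_1,s_2}$ is $\Ad(s_1)(-)+\Ad(s_2)(-)$ equipped with the equivariant structure described in \cref{rmk:sums}; since \eqref{eqn:BstoBinf} is built from $B^\rs$-bilinear Morita data it commutes with left multiplication by $s_1,s_2$, so applying it termwise yields the same formula on the ${}_\phi B^\infty$ side, which is the content of ``understood in a similar way as in \cref{rmk:sums}''. Finally, running the same bijection with $B\otimes C[0,1]$ in place of $B$ and using $(B\otimes C[0,1])^\infty\cong B^\infty\otimes C[0,1]$ compatibly with the evaluation cocycle morphisms $\ev_0,\ev_1$ identifies $\simeq$ on $\bE^\cC((\alpha,\fu),(\beta,\fv))$ with the evident homotopy relation on the $B^\infty$-picture data.

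The only step I expect to require genuine care is the first one: that \eqref{eqn:BstoBinf} and \eqref{eqn:BinftoBs} are mutually inverse $\cC$-equivariant structures. This is a coherence diagram chase whose crux is fixing the natural isomorphism $\beta^\rs(X)\otimes_{B^\rs}B^\infty\cong B^\infty\otimes_B\beta(X)$ and controlling its interaction with the monoidal cells $\fv^\rs$ and $\fv$; once that is pinned down, the pentagon for $\widehat\bu$ is forced by the pentagon for $\bu$, and the remaining verifications are routine bookkeeping.
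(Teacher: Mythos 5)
Your proposal is correct and takes essentially the same route as the paper: the lemma is stated there as a summary of the immediately preceding discussion, which constructs exactly the two mutually inverse maps \eqref{eqn:BstoBinf} and \eqref{eqn:BinftoBs} (dismissing their inverse property as ``a direct computation''), and transports the Cuntz pair condition via the same chain of equivalences using \cref{lem:compactstensoridentity} applied to the Morita equivalence $B^{\infty}$. Your identification of the coherence check for $\widehat{\bu}$ as the only step requiring genuine care matches the one point the paper leaves implicit.
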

For the remainder of this section we will refer to $\cC$-Cuntz pairs as suggested by Lemma \ref{lem:cuntpairBinf}.
\begin{rmk}\label{rmk:canonicalmaps}
It follows from Lemma \ref{lem:cuntpairBinf} that we have a canonical homomorphism of abelian groups $\Phi\colon\bE^\cC((\alpha,\fu),(\beta,\fv))/{\simeq}\to \KK^\cC((\alpha,\fu),(\beta,\fv))$ sending $\left((\phi,\bu),(\psi,\bv)\right)$ to $\left((B^{\infty})_{+}\oplus (B^{\infty})_{-},\phi\oplus\psi,\bu\oplus\bv,\left(\begin{array}{cc}0&1\\1&0\end{array}\right)\right)$ which is a $\cC$-Kasparov module. A homotopy of $\cC$-Cuntz pairs will induce a homotopy of the respective Kasparov modules, so this canonical map induces a well-defined map $\bE^\cC((\alpha,\fu),(\beta,\fv))/{\simeq}\to \KK^\cC((\alpha,\fu),(\beta,\fv))$. 
%The same construction yields a well-defined map $\bE_{\mathrm{e}}^\cC((\alpha,\fu),(\beta,\fv))/{\simeq_{\mathrm{e}}}\to \KK^\cC((\alpha,\fu),(\beta,\fv))$. 
%These maps send the Cuntz sum of two (extendible) Cuntz pairs to the sum of their images in $\KK^\cC((\alpha,\fu),(\beta,\fv))$, where the latter does not depend on the choice of isometries $s_1,s_2\in \cM(B\otimes\bK)$ generating $\cO_2$. \todo{explain this last bit more carefully}
\end{rmk}

To show that the mapping $\bE^\cC((\alpha,\fu),(\beta,\fv))/{\simeq}\to \KK^\cC((\alpha,\fu),(\beta,\fv))$ defined in Remark \ref{rmk:canonicalmaps} is a group isomorphism, we factor through a variant of $\KK^\cC((\alpha,\fu),(\beta,\fv))$.

\begin{defn}\label{defn: KKVariants}
Let $(A,\alpha,\fu)$ and $(B,\beta,\fv)$ be $\cC$-C$^*$-algebras with $A$ separable and $B$ $\sigma$-unital. Let 
\[\bfE_{\mathrm{s}}^\cC((\alpha,\fu),(\beta,\fv)) \coloneqq \{ (E,\phi,\bu,F)\in\bfE^\cC((\alpha,\fu),(\beta,\fv)) \,|\, F^*=F, F^2=1 \}\] and 
\[\bfE_{\mathrm{c}}^\cC((\alpha,\fu),(\beta,\fv)) \ = \{ (E,\phi,\bu,F)\in\bfE^\cC((\alpha,\fu),(\beta,\fv)) \,|\, F^*=F\text{ contractive} \}.\]
%\item $\bfE_{\mathrm{s}}^\cC((\alpha,\fu),(\beta,\fv))\hspace{0.17cm} = \{ (E,\phi,\bu,F)\in\bfE^\cC((\alpha,\fu),(\beta,\fv)) \,|\, F^*=F\text{ unitary} \},$
%\item $\bfE_{\nd}^\cC((\alpha,\fu),(\beta,\fv)) = \{ (E,\phi,\bu,F)\in\bfE^\cC((\alpha,\fu),(\beta,\fv))\,|\, \text{$\phi$ non-degenerate} \}$, 
%\item $\bfE_{\mathrm{e}}^\cC((\alpha,\fu),(\beta,\fv))\hspace{0.17cm} = \{ (E,\phi,\bu,F)\in\bfE^\cC((\alpha,\fu),(\beta,\fv))\,|\, \text{$\phi$ extendible}\},$ 
%\item $\bfE_{\ec}^\cC((\alpha,\fu),(\beta,\fv))\hspace{0.1cm} = \bfE_{\mathrm{e}}^\cC((\alpha,\fu),(\beta,\fv))\cap \bfE_{\mathrm{c}}^\cC((\alpha,\fu),(\beta,\fv))$,
%\item $\bfE_{\es}^\cC((\alpha,\fu),(\beta,\fv))\hspace{0.09cm} = \bfE_{\mathrm{e}}^\cC((\alpha,\fu),(\beta,\fv))\cap \bfE_{\mathrm{s}}^\cC((\alpha,\fu),(\beta,\fv))$. 
We then define \[\KK^\cC_{\mathrm{s}}((\alpha,\fu),(\beta,\fv))\coloneqq\bfE^\cC_{\mathrm{s}}((\alpha,\fu),(\beta,\fv)) /{\simeq_{\mathrm{s}}},\] 
and
\[
\KK^\cC_{\mathrm{c}}((\alpha,\fu),(\beta,\fv))\coloneqq\bfE^\cC_{\mathrm{c}}((\alpha,\fu),(\beta,\fv)) /{\simeq_{\mathrm{c}}},
\]
    where for $x_0,x_1\in \bfE_{\mathrm{s}}^\cC((\alpha,\fu),(\beta,\fv))$ we write $x_0\simeq_{\mathrm{s}} x_1$ if there exists an element $y\in \bfE^\cC_{\mathrm{s}}((\alpha,\fu),(\beta\otimes\id_{C[0,1]},\fv\otimes 1))$ such that $y\circ \ev_1$ and $y \circ \ev_0$ are $\cC$-equivariantly unitarily equivalent to $x_1$ and $x_0$ respectively. We define $\simeq_{\mathrm{c}}$ in the same manner.    
    %Likewise, we can define $\simeq_{\mathrm{s}}$ and $\KK^\cC_{\mathrm{s}}((\alpha,\fu),(\beta,\fv))$. 
    %$\KK^\cC_{\nd}((\alpha,\fu),(\beta,\fv))$, $\KK^\cC_{\mathrm{e}}((\alpha,\fu),(\beta,\fv))$, $\KK^\cC_{\ec}((\alpha,\fu),(\beta,\fv))$, and $\KK^\cC_{\es}((\alpha,\fu),(\beta,\fv))$.
\end{defn}

Consider the sequence of maps
\[ \bE^\cC ((\alpha,\fu),(\beta,\fv))/{\simeq} \to \KK^\cC_{\mathrm{s}}((\alpha,\fu),(\beta,\fv)) %\to \KK^\cC_{\mathrm{c}} 
\to \KK^\cC((\alpha,\fu),(\beta,\fv)), \]
%\[ \bE^\cC_{\mathrm{e}}/{\simeq_{\mathrm{e}}} \to \KK^\cC_{\es} \to \KK^\cC_{\ec} \to \KK^\cC_{\mathrm{e}} \to \KK^\cC \]
where we use that the map of \cref{rmk:canonicalmaps} actually falls into $\KK^\cC_{\mathrm{s}}$. We first show that the canonical map $\KK_{\mathrm{s}}^\cC((\alpha,\fu),(\beta,\fv))\to \KK^\cC((\alpha,\fu),(\beta,\fv))$ is an isomorphism. Thus it will remain to show that $\bE^\cC ((\alpha,\fu),(\beta,\fv))/{\simeq} \to \KK^\cC_{\mathrm{s}}((\alpha,\fu),(\beta,\fv))$ is an isomorphism. The lemma below follows from a well-known argument in non-equivariant $\KK$-theory (see \cite[17.4]{blackadar}),
%We first show that the rightmost map in the below is an isomorphism.

	%Indeed, this gives a well-defined assignment $\bfE^\cC((\alpha,\fu),(\beta,\fv))\to \KK^\cC_{\nd}((\alpha,\fu),(\beta,\fv))$ since the Kasparov product is unique up to an operator homotopy \nn{what is meant here? SG}, and for any element in $\bfE^\cC(A,C[0,1]\otimes B)$, its Kasparov product with $(A,\id_A,0)$ gives a homotopy between its evaluations at $0$ and $1$, so the map $\KK^\cC((\alpha,\fu),(\beta,\fv))\to\KK_{\nd}^\cC((\alpha,\fu),(\beta,\fv))$ is indeed well-defined, which is the desired inverse by construction. \nn{Doesn't this follow from \cite[Lemma 3.16]{ARKIKU23}? Rather than citing Blackadar which does not discuss $\cC$-equivariance? SG}

\begin{lemma}\label{lemma: unitaryKK}
Let $(A,\alpha,\fu)$ and $(B,\beta,\fv)$ be $\cC$-C$^*$-algebras with $A$ separable and $B$ $\sigma$-unital. Then the canonical map $\Phi\colon \KK_{\mathrm{s}}^\cC((\alpha,\fu),(\beta,\fv))\to \KK^\cC((\alpha,\fu),(\beta,\fv))$ is a group isomorphism. 
\end{lemma}

\begin{proof} 
Since $\Phi$ is the composition of the two maps $\KK_{\mathrm{s}}^\cC((\alpha,\fu),(\beta,\fv))\to \KK_{\mathrm{c}}^\cC((\alpha,\fu),(\beta,\fv))$ and $\KK_{\mathrm{c}}^\cC((\alpha,\fu),(\beta,\fv))\to \KK^\cC((\alpha,\fu),(\beta,\fv))$, it suffices to show that they are both isomorphisms. The map \[\KK_{\mathrm{c}}^\cC((\alpha,\fu),(\beta,\fv))\to \KK^\cC((\alpha,\fu),(\beta,\fv))=\bfE^\cC((\alpha,\fu),(\beta,\fv))/{\simeq}\] is indeed an isomorphism by \cite[Remark 3.8]{ARKIKU23} (see also \cite[Proposition 17.4.3]{blackadar}). 
%Its inverse given by the functional calculus of $(F+F^*)/2$ with respect to the function 
%\[t\mapsto\left\{ \begin{array}{cl} -1&(\text{if $t\leq -1$})\\ t&(\text{if $-1\leq t\leq 1$})\\ 1&(\text{if $1\leq t$}).\end{array} \right.\]
Moreover, the canonical map $\Theta\colon\KK_{\mathrm{s}}^\cC((\alpha,\fu),(\beta,\fv))\to \KK_{\mathrm{c}}^\cC((\alpha,\fu),(\beta,\fv))$ is an isomorphism with the inverse $\Psi$ given by the assignment
\begin{align*}
	&
	 (E,\phi,\bu,F)\mapsto \left(E\oplus E^{\op},\phi\oplus0,\bu\oplus0,\left(\begin{array}{cc}
		F&\sqrt{1-F^2}\\\sqrt{1-F^2}&-F
	\end{array}\right)\right).
\end{align*}
Indeed, notice that any $(E,\phi,\bu,F)\in\bfE_{\mathrm{s}}^\cC((\alpha,\fu),(\beta,\fv))$ is homotopic to $\Psi\circ\Theta(E,\phi,\bu,F)$ via an element in $\bfE_{\mathrm{s}}^\cC(A,C[0,1]\otimes B)$ of the form
\begin{align*}
	&
	\left((C[0,1]\boxtimes E)\oplus (C_0(0,1]\boxtimes E^{\op}),\phi\oplus0,\bu\oplus0,\left(\begin{array}{cc}
		F&0\\0&-F
	\end{array}\right)\right).
\end{align*}
On the other hand, any $(E,\phi,\bu,F)\in\bfE_{\mathrm{c}}^\cC((\alpha,\fu),(\beta,\fv))$ is homotopic to $\Theta\circ\Psi(E,\phi,\bu,F)$ via an element in $\bfE_{\mathrm{c}}^\cC(A,C[0,1]\otimes B)$ of the form
\begin{align*}
	&
	\left((C[0,1]\boxtimes E)\oplus (C_0(0,1]\boxtimes E^{\op}),\phi\oplus0,\bu\oplus0,\left(\begin{array}{cc}
		F&G^*\\G&-F
	\end{array}\right)\right),
\end{align*}
where $G\colon[0,1] \ni t\mapsto t\sqrt{1-F^2} \in \cL(E,E^{\op})$. 
Thus, it follows that the canonical map $\Phi\colon \KK_{\mathrm{s}}^\cC((\alpha,\fu),(\beta,\fv))\to \KK^\cC((\alpha,\fu),(\beta,\fv))$ is an isomorphism. 
\end{proof}

\begin{theorem}\label{thm:Cuntzpicture}
Let $(A,\alpha,\fu)$ and $(B,\beta,\fv)$ be $\cC$-C$^*$-algebras with $A$ separable and $B$ $\sigma$-unital. Then the canonical map %$\bE_{\mathrm{e}}^\cC((\alpha,\fu),(\beta,\fv))/{\simeq_{\mathrm{e}}}\to \KK^\cC((\alpha,\fu),(\beta,\fv))$ and 
$\bE^\cC((\alpha,\fu),(\beta,\fv))/{\simeq}\to \KK^\cC_{s}((\alpha,\fu),(\beta,\fv))$ is bijective and hence the map $\bE^\cC((\alpha,\fu),(\beta,\fv))/{\simeq}\to \KK^\cC((\alpha,\fu),(\beta,\fv))$ is bijective.
\end{theorem}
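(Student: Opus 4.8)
The second assertion is immediate from the first together with the already-noted isomorphism $\KK^\cC_{\mathrm{s}}((\alpha,\fu),(\beta,\fv))\cong\KK^\cC((\alpha,\fu),(\beta,\fv))$, so the plan is to prove that the canonical map $\Phi\colon\bE^\cC((\alpha,\fu),(\beta,\fv))/{\simeq}\to\KK^\cC_{\mathrm{s}}((\alpha,\fu),(\beta,\fv))$ (the lift to $\KK^\cC_{\mathrm{s}}$ of the homomorphism of \cref{rmk:canonicalmaps}) is bijective. Throughout I would work in the $B^\infty$-model of \cref{lem:cuntpairBinf}, under which $\Phi$ sends $((\phi,\bu),(\psi,\bv))$ to the class of $\bigl((B^\infty)^+\oplus(B^\infty)^-,\phi\oplus\psi,\bu\oplus\bv,\left(\begin{array}{cc}0&1\\1&0\end{array}\right)\bigr)$, and I would freely use that $\KK^\cC_{\mathrm{s}}$ shares the formal features of the non-equivariant theory --- in particular that adjoining a degenerate cycle does not alter a class (this is elementary for the specific zero-left-action cycle used below, which is homotopic to the zero module through a mapping cone).

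\emph{Surjectivity.} Let $(E,\phi,\bv,F)\in\bfE^\cC_{\mathrm{s}}((\alpha,\fu),(\beta,\fv))$, which we may assume has $E=E_+\oplus E_-$ countably generated (the usual reduction, using that $A$ is separable and $B$ is $\sigma$-unital). Since $F$ is an odd self-adjoint unitary, its off-diagonal part $F_+\colon E_+\to E_-$ is a Hilbert-module unitary; transporting the cycle along the even unitary $\left(\begin{array}{cc}1&0\\0&F_+^*\end{array}\right)$ --- a $\cC$-equivariant unitary equivalence once the $\cC$-equivariant structure is transported with it --- we may assume $E_+=E_-=:E_0$, that $F=\left(\begin{array}{cc}0&1\\1&0\end{array}\right)$, and (since $E$ is by definition a direct sum of two $\cC$-correspondences) that the left action and equivariant structure are diagonal, coming from $\cC$-$A$-$B$-correspondence structures $(E_0,\phi_\pm,\bv^\pm)$. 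The first two Kasparov conditions are then vacuous, and $[F,T_\xi]\in\cK$ unwinds precisely to $(\bv^+_X-\bv^-_X)(\xi\otimes-)\in\cK(E_0,E_0\otimes\beta(X))$ for all $X\in\cC$, $\xi\in\alpha(X)$, i.e.\ the analogue over $E_0$ of the $\cC$-Cuntz-pair condition of \cref{lem:cuntpairBinf}. Next I would adjoin the degenerate cycle $\bigl({}_0B^\infty\oplus{}_0B^\infty,\,0,\,0,\,\left(\begin{array}{cc}0&1\\1&0\end{array}\right)\bigr)$, where ${}_0B^\infty$ is $B^\infty$ equipped with the zero left $A$-action: this is a legitimate $\cC$-correspondence because $\alpha(X)\otimes_A{}_0B^\infty=0$ for every $X\in\cC$ (by right non-degeneracy of $\alpha(X)$), so the family of zero maps is vacuously a $\cC$-equivariant structure, i.e.\ \eqref{cocyclemorphismdiagram} holds trivially. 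Adjoining it replaces $E_0$ by $E_0\oplus B^\infty$, which by Kasparov's stabilisation theorem is isomorphic to $B^\infty$ via some Hilbert-module unitary $W$; transporting the two diagonal summands along this \emph{single} unitary $W$ (and $W\otimes\id$) yields $^*$-homomorphisms $\phi'_\pm\colon A\to\cM(B^\rs)=\cL(B^\infty)$ and $\cC$-equivariant structures $\bu'_\pm$ on ${}_{\phi'_\pm}B^\infty$ that still form a $\cC$-Cuntz pair, the compactness bookkeeping being handled by \cref{lem:compactstensoridentity}. The resulting cycle is exactly $\Phi((\phi'_+,\bu'_+),(\phi'_-,\bu'_-))$, so $\Phi$ is surjective.

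\emph{Injectivity.} Suppose $[x]\in\ker\Phi$ for a $\cC$-Cuntz pair $x$, so its standard cycle $\Phi(x)$ is $\simeq_{\mathrm{s}}$-homotopic to a degenerate cycle, realised by some $y\in\bfE^\cC_{\mathrm{s}}((\alpha,\fu),(\beta\otimes\id_{C[0,1]},\fv\otimes1))$. Carrying out the reduction of the previous paragraph over the coefficient algebra $B\otimes C[0,1]$ --- adjoining a degenerate cycle over the interval and applying Kasparov stabilisation --- turns $y$ into a cycle of standard form over $[0,1]$, that is, into a homotopy of $\cC$-Cuntz pairs; the one extra point is that this must be made compatible with the already-standard cycles lying over $t=0$ and $t=1$, and this is exactly where \cref{lem:strictpathconnisomet} enters, providing a path of isometries over $C[0,1]$ connecting the two endpoint stabilising isometries so that $W$ above can itself be chosen over $C[0,1]$. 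It follows that $x$ is $\simeq$-equivalent, after adjoining degenerate $\cC$-Cuntz pairs, to a degenerate $\cC$-Cuntz pair; since such a pair represents $0$ by \cref{lemma: CuntzSumGroup}\ref{item: DegHom0}, we conclude $[x]=0$, so $\Phi$ is injective.

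\emph{Main obstacle.} The delicate part is the injectivity step: propagating the $\cC$-equivariant structure, the stabilisation, and the endpoint identifications coherently over the parameter interval, for which the explicit $B^\rs\leftrightarrow B^\infty$ dictionary \eqref{eqn:BstoBinf}--\eqref{eqn:BinftoBs}, \cref{lem:compactstensoridentity}, and the path-of-isometries \cref{lem:strictpathconnisomet} are the essential tools. By contrast, once one observes that ${}_0B^\infty$ is a bona fide $\cC$-correspondence, the surjectivity argument requires only ordinary Kasparov stabilisation and no absorption theorem.
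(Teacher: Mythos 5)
Your proposal is correct and follows essentially the same route as the paper: Kasparov stabilisation plus a degenerate zero-left-action copy of $B^\infty$ to put cycles into standard form, with \cref{lem:strictpathconnisomet} handling the choice of stabilising isometry over the homotopy parameter. The only difference is organisational — the paper packages the argument as an explicit inverse $\Psi$ (compressing by the isometry $S\in\cL(E_+,B^\infty)$ and its twist $SU$) and verifies $\Phi\circ\Psi=\id$ and $\Psi\circ\Phi=\id$, whereas you prove surjectivity and injectivity separately; your injectivity step is precisely the paper's verification that $\Psi$ descends to $\simeq_{\mathrm{s}}$-classes.
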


As noted in \cref{lemma: CuntzSumGroup}, $\bE^\cC((\alpha,\fu),(\beta,\fv))/{\simeq}$ is a group under Cuntz sum, so the maps of Theorem \ref{thm:Cuntzpicture} are isomorphisms of groups. 

\begin{proof}
By Lemma~\ref{lemma: unitaryKK}, it suffices to show that the canonical map \[\bE^\cC((\alpha,\fu),(\beta,\fv))/{\simeq}\to \KK^\cC_{s}((\alpha,\fu),(\beta,\fv))\]is an isomorphism.
Note that in this proof we shall denote a generic $\cC$-Cuntz pair in $\bE^\cC((\alpha,\fu),(\beta,\fv))$ by $(\phi_\pm,\bu_\pm)$ instead of $((\phi_+,\bu_+),(\phi_-,\bu_-))$.
Let us first construct a homomorphism \[\KK_{\mathrm{s}}^\cC((\alpha,\fu),(\beta,\fv))\to \bE^\cC((\alpha,\fu),(\beta,\fv))/{\simeq}.\]
Indeed, given $(E,\phi,\bu,F)\in\bfE_{\mathrm{s}}^\cC((\alpha,\fu),(\beta,\fv))$, we denote by $(E_+,\phi_+,\bu_+)$ and $(E_-,\phi_-,\bu_-)$ its positive and negatively graded parts.
Let $U\in\cU(E_-,E_+)$ be the unitary such that $F=(\begin{smallmatrix}0&U\\U^*&0\end{smallmatrix})$.
Given an adjointable isometry $S\in\cL(E_+, B^{\infty})$, set $S_+\coloneqq S$ and $S_-\coloneqq SU$.
Note that such an isometry exists by Kasparov's stabilisation theorem since one may let $T\colon E_{+}\oplus B^{\infty}\rightarrow B^{\infty}$ be a right $B$-module isomorphism and $S=T\circ \iota$ with $\iota\colon E_{+}\rightarrow E_{+}\oplus B^{\infty}$ defined by $\iota(\xi)=\xi\oplus 0$.
We then define an element of $\bE^\cC((\alpha,\fu),(\beta,\fv))$ by
    \begin{equation}\label{eq: Psi}
        \Psi(E,\phi,\bu,F;S)\coloneqq (\Ad(S_{\pm})\phi_\pm,(S_\pm\otimes\id_{\beta(X)}) \bu_\pm (\id_{\alpha(X)}\otimes S_\pm^*)). 
    \end{equation}

    We claim that the map $\Psi$ is well-defined, i.e. that it does not depend on the choice of the isometry $S$. Indeed, \cref{lem:strictpathconnisomet} shows that any two isometries $S_0$ and $S_1$ constructed as above, can be connected with a strictly continuous path of adjointable isometries $S_t\in\cL(E,B^{\infty})$ for $t\in[0,1]$. Letting $S_{+,t}\coloneqq S_t$ and $S_{-,t}\coloneqq S_tU$, we see that \[\Psi(E,\phi,\bu,F;S_0)\simeq\Psi(E,\phi,\bu,F;S_1).\] 
    Thus, $\Psi$ gives a well-defined map $\bfE_{\mathrm{s}}^\cC((\alpha,\fu),(\beta,\fv))\rightarrow \bE^\cC((\alpha,\fu),(\beta,\fv))$. 
    By construction, $\Psi$ induces a well-defined map $\KK_{\mathrm{s}}^\cC((\alpha,\fu),(\beta,\fv))\to \bE^\cC((\alpha,\fu),(\beta,\fv))/{\simeq}$ which will also be denoted by $\Psi$. 
	
    If $\Phi$ is the map defined in Remark \ref{rmk:canonicalmaps}, then the isomorphism $T\colon E_{+}\oplus B^{\infty}\rightarrow B^{\infty}$ induces an isomorphism of Kasparov modules \[\Phi\circ\Psi(E,\phi,\bu,F)\cong (E,\phi,\bu,F)\oplus \left((B^{\infty})_+\oplus (B^{\infty})_-,0,0,0\right).\] Since $\left(B^{\infty}_+\oplus B^{\infty}_-,0,0,0\right)$ is a degenerate module, it follows that \[\Phi\circ\Psi(E,\phi,\bu,F;S)\simeq (E,\phi,\bu,F)\] in $\KK^\cC((\alpha,\fu),(\beta,\fv))$. Hence, $\Phi\circ\Psi(E,\phi,\bu,F)\simeq_{\mathrm{s}} (E,\phi,\bu,F)$ so $\Phi\circ\Psi$ is the identity map on $\KK_{\mathrm{s}}^\cC((\alpha,\fu),(\beta,\fv))$.
 
    \par We claim that $\Psi\circ\Phi$ is the identity map on $\bE^\cC((\alpha,\fu),(\beta,\fv))/{\simeq}$. Note first that $\Psi\circ\Phi(\phi_\pm,\bu_\pm)=(\phi_\pm,\bu_\pm)$ by choosing $S=\id_{B^{\infty}}$. As the homotopy class of the $\cC$-Cuntz pair $\Psi\circ\Phi(\phi_\pm,\bu_\pm)$ does not depend on the choice of $S$, the claim follows. Hence the canonical map $\bE^\cC((\alpha,\fu),(\beta,\fv))/{\simeq}\to\KK_{\mathrm{s}}^\cC((\alpha,\fu),(\beta,\fv))$ is an isomorphism and so the conclusion follows by Lemma~\ref{lemma: unitaryKK}.
\end{proof}

\section{Stable operator homotopy}\label{sec:operatorhom}

In this section we show the $\cC$-equivariant version of \cite[Lemma~2.8]{GASZ22} i.e. that if the class of a $\cC$-Cuntz pair is $0$ in $\KK$-theory, then any pair of $\cC$-cocycle representations yielding this Cuntz-pair are stably operator homotopic. We start by defining the $\cC$-equivariant analogue of stable operator homotopy.

\begin{defn}
Let $(A,\alpha,\fu)$ and $(B,\beta,\fv)$ be $\cC$-C$^*$-algebras, and $\phi,\psi\in\Hom^\cC((\alpha,\fu), (\beta^\rs,\fv^\rs))$. 
We say that $\phi$ and $\psi$ are \emph{$\cC$-operator homotopic} 
if there is a norm-continuous path $u_t\in\cU(\fD_{\phi})$ over $t\in[0,1]$ such that $u_0=1$ and $\Ad(u_1) \circ \phi=\psi$.\footnote{Recall the definition of $\fD_\phi$ from Definition \ref{defn: DPhi}.} We further say that 
$\phi$ and $\psi$ are \emph{stably $\cC$-operator homotopic} 
if there is another $\cC$-cocycle representation $\kappa$ such that $\phi\oplus \kappa$ and $\psi\oplus \kappa$ are $\cC$-operator homotopic.
\end{defn}
\begin{comment}
    In the definition, note that for every object $X\in\cC$ with a unitary $u\colon X\cong \bigoplus_{k=1}^{n}X_k$ for some $X_k\in\Irr(\cC)$, we have 
\begin{align*}
    &[\fD_\phi,\phi_X(\alpha(X))] 
    = \beta(u^*) \{ ([x,\phi_{X_k}(\alpha(X_k))])_{k=1}^{n} \,|\, x\in \fD_\phi \} \\
    \subset{}& \beta(u^*) \bigoplus_{k=1}^{n} \cK(B,\beta(X_k)) = \cK(B,\beta(X_k)) . 
\end{align*}
\end{comment}

\begin{rmk}
The relation of $\cC$-operator homotopy is an equivalence relation. Symmetry follows from the fact that $\fD_{\phi}\subset\fD_{\psi}$ if $\phi$ and $\psi$ are $\cC$-operator homotopic. To see this, let $x\in \fD_{\phi}$, $X\in\cC$, $\xi\in\alpha(X)$, and $u_t\in\cU(\fD_{\phi})$ be the path of unitaries realising the $\cC$-operator homotopy with $1=u_0$ and $u\coloneqq u_1$. Then, since $u\in \fD_{\phi}$, we have that $\psi_X(\xi)\equiv\phi_X(\xi)$ mod $\cK(B,\beta(X))$. But $x\in \fD_{\phi}$, so it follows that $[x,\psi_X(\xi)]\equiv 0$ mod $\cK(B,\beta(X))$. This gives that $\fD_{\phi}\subset\fD_{\psi}$. Transitivity follows by a similar argument.
\end{rmk}

\begin{notation}
Let $(e_n)_{n\geq 0}$ be an orthonormal basis of $\ell^2(\bZ_{\geq 0})$ and regard $\ell^2(\bZ_{\geq1})$ as the closed subspace of $\ell^2(\bZ_{\geq 0})$ generated by $(e_n)_{n\geq 1}$. 
By \cite[Theorem 1.15]{GASZ22} (see also \cite[Lemma 3.26]{CMR07}), we have a unital $^*$-homomorphism $\theta\colon C[0,1]\to\cB(\ell^2(\bZ_{\geq 1}))$ and a norm-continuous path of unitaries $w_t\in \cU(\cB(\ell^2(\bZ_{\geq 0})))$ such that \[w_0=1,\ \Ad(w_1) \circ \theta_0=\theta_1,\ \text{and}\ [w_t,\theta_0(C[0,1])]\subset\cK(\ell^2(\bZ_{\geq 0})),\] for any $t\in[0,1]$, where $\theta_t$ denotes the map $\theta_t\coloneqq\ev_t\oplus \theta\colon C[0,1]\to\cB(\ell^2(\bZ_{\geq 0}))$, whose direct sum is taken with respect to the isometries $\bC\cong \bC e_0\subset \ell^2(\bZ_{\geq 0})$ and $\ell^2(\bZ_{\geq 1})\subset \ell^2(\bZ_{\geq 0})$. 
Moreover, given a $\cC$-C$^*$-algebra $(B,\beta,\fv)$, we set up some notation.
\begin{itemize}[leftmargin=*]%\setlength{\leftskip}{-2em}
\item
Fix isometries $r_n\in\cB(\ell^2(\bZ_{\geq 0}))$ for $n\in\bZ_{\geq 0}$ with pairwise orthogonal ranges such that $\sum_{n=0}^{\infty} r_nr_n^* =1$ in the strict topology, and let $r_\infty\coloneqq \sum_{n=0}^{\infty}r_{n+1}r_n^*$. 
For $n\in\bZ_{\geq 0}\cup\{\infty\}$, we write $r_n^B\coloneqq 1_{\cM(B)}\otimes r_n\in \cM(B^\rs)$. 
We regard $r=(r_n)_{n=0}^{\infty}\in\cU(\ell^2(\bZ_{\geq 0})^{\otimes 2},\ell^2(\bZ_{\geq 0}))$ as a unitary sending $\xi\otimes e_n$ to $r_n\xi$ for all $n\in\bZ_{\geq 0}$ and $\xi\in\ell^2(\bZ_{\geq 0})$. We also let $r^B= 1_{\cM(B)}\otimes r$.
\item
We define $\theta^B\in \Hom^\cC((\beta^\rs\otimes \id_{C[0,1]},\fv^\rs\otimes 1),(\beta^\rs,\fv^\rs))$ by $\theta^{B}\coloneqq \Ad(r^B) \circ (\id_{B^\rs}\otimes \theta)=\id_{B}\otimes \Ad(r)\circ (\id_{\bK}\otimes \theta)$. 
This is indeed a well-defined $\cC$-cocycle representaton by \cref{example: cocyclerep}.
\item
Similarly, we define $\theta^B_t\coloneqq \Ad(r^B)\circ (\id_{B^\rs}\otimes \theta_t)=(\id_{B}\otimes \Ad(r))\circ (\id_{\bK}\otimes \theta_t) \in \Hom^\cC((\beta^\rs\otimes \id_{C[0,1]},\fv^\rs\otimes 1),(\beta^\rs,\fv^\rs))$ for $t\in [0,1]$. 
Note that $\theta^B_t=(\id_{B^{\rs}}\otimes\ev_t)\oplus_{r_0^B,r_\infty^B}\theta^B$. 
\item
We let $w^B_t\coloneqq \Ad(r^B) (1_{\cM(B^\rs)}\boxtimes w_t)\in \cU(\cM(B^\rs))$. 
Note that $\Ad(w^B_1)\circ \theta^B_0 = \theta^B_1$ as $\cC$-cocycle representations and $[w^B_t , \theta^B_{0,X}(\xi)] \in \cK(B^\rs,\beta^\rs(X))$ for all $t\in[0,1]$, $X\in\Irr(\cC)$, and $\xi\in\beta^\rs(X)\boxtimes C[0,1]$. 
\end{itemize}
%For $\phi,\psi\in\Hom^\cC((\alpha,\fu), (\beta^\rs,\fv^\rs))$, we denote by $\phi\oplus_{r^B_0,r^B_\infty}\psi\in \Hom^\cC((\alpha,\fu), (\beta^\rs,\fv^\rs))$ the Cuntz sum obtained using the pair of isometries $(r^B_0,r^B_\infty)$.
\end{notation}
The following lemma is the key step in the proof of the main result of this section, Theorem \ref{thm:KKCtostableoh}. Similarly to the argument employed in \cite[Lemma~2.8]{GASZ22}, a crucial ingredient is the $\cC$-equivariant version of Kasparov's technical theorem (\cite[Theorem B.1]{ARKIKU23}).

\begin{lemma}\label{lem:KKarantine}
For $\cC$-C$^*$-algebras $(A,\alpha,\fu)$ and $(B,\beta,\fv)$, with $A$ separable and $B$ $\sigma$-unital,
let \[(\Phi,\Psi)\in\bE^{\cC}((\alpha,\fu),(\beta\otimes\id_{C[0,1]},\fv\otimes 1))\] be such that \[\phi \coloneqq \ev_0\circ \Phi = \ev_1\circ \Phi = \ev_0\circ \Psi.\] 
If we put $\psi \coloneqq \ev_1\circ \Psi $ and \[\kappa \coloneqq \theta^B\circ \Psi \oplus_{r^B_0,r^B_\infty} \theta^B_0\circ \Phi \in \Hom^\cC((\alpha,\fu), (\beta^\rs,\fv^\rs)),\] then 
$\phi \oplus_{r^B_0,r^B_\infty} \kappa$ is $\cC$-operator homotopic to  $\psi \oplus_{r^B_0,r^B_\infty} \kappa$. 
\end{lemma}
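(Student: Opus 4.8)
The strategy mirrors the classical argument of Dadarlat--Eilers and its group-equivariant counterpart \cite[Lemma~2.8]{GASZ22}, adapted via duality to the unitary tensor category setting. The key point is that the path $(\Phi,\Psi)$ is a homotopy whose endpoint behaviour is controlled, and the "infinite repeat" absorbing trick encoded in the $\theta^B$-machinery from the preceding \textbf{Notation} turns this homotopy into an honest operator homotopy after one stabilisation. I would begin by unwinding the definition of $\kappa$ and relating $\phi\oplus_{r^B_0,r^B_\infty}\kappa$ and $\psi\oplus_{r^B_0,r^B_\infty}\kappa$ to the $\cC$-cocycle representations $\theta^B_0\circ\Phi$ and $\theta^B_1\circ\Psi$, using that $\theta^B_t = (\id_{B^\rs}\otimes\ev_t)\oplus_{r^B_0,r^B_\infty}\theta^B$ and that $\ev_0\circ\Phi = \ev_1\circ\Phi = \ev_0\circ\Psi = \phi$. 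Concretely, absorbing the extra $\ev_0$-summand into $\kappa$ via a unitary rearrangement (permutation of the isometries $r_n^B$) should identify $\phi\oplus_{r^B_0,r^B_\infty}\kappa$ up to unitary equivalence with $\theta^B_0\circ\Phi \oplus_{r^B_0,r^B_\infty} \theta^B\circ\Psi$ and $\psi\oplus_{r^B_0,r^B_\infty}\kappa$ with $\theta^B_1\circ\Phi \oplus \theta^B\circ\Psi$; the first summands differ by $\Ad(w^B_1)$ and on the second summand $\Phi$ can be slid to $\Psi$ along the given homotopy.

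Next I would produce the path. Since $\Ad(w^B_1)\circ\theta^B_0 = \theta^B_1$ as $\cC$-cocycle representations, the path $w^B_t$ gives a norm-continuous path of unitaries in $\cM(B^\rs)$ from $1$ to $w^B_1$ conjugating $\theta^B_0\circ\Phi$ (almost) to $\theta^B_1\circ\Phi$; the discrepancy is precisely that $w^B_t$ need not lie in $\fD_{\theta^B_0\circ\Phi\,\oplus\,\kappa}$. This is where the Kasparov-type commutator estimates from the preceding notation enter: $[w^B_t,\theta^B_{0,X}(\xi)]\in\cK(B^\rs,\beta^\rs(X))$ for all $X\in\Irr(\cC)$. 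One then couples this with a path coming from the homotopy $(\Phi,\Psi)$ — reparametrised so that as $t$ runs over $[0,1]$ the second coordinate of the Cuntz pair is dragged from $\ev_0\circ\Psi$ to $\ev_1\circ\Psi$ — to assemble a single norm-continuous unitary path $u_t$ with $u_0 = 1$ and $\Ad(u_1)\circ(\phi\oplus_{r^B_0,r^B_\infty}\kappa) = \psi\oplus_{r^B_0,r^B_\infty}\kappa$. The $\cC$-equivariant Kasparov technical theorem \cite[Theorem B.1]{ARKIKU23} is invoked exactly as in \cite{GASZ22}: it furnishes a positive element interpolating between the "small" commutator data of $w^B_t$ and the compact-ideal data witnessing that $(\Phi,\Psi)$ is a $\cC$-Cuntz pair along the path, so that the constructed path $u_t$ actually lands in $\cU(\fD_{\phi\oplus_{r^B_0,r^B_\infty}\kappa})$. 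Duality (Lemma \ref{lemma:barnotation}, Proposition \ref{prop:dualrep}, Lemma \ref{lem:usingisometryofstar}) is used to see that the relevant commutator conditions, a priori only imposed for $\xi\in\alpha(X)$, propagate to all of $\cL(B,\beta(X))$ and are stable under the $\overline{(\cdot)}$ operation, which is what makes $\fD_{\phi\oplus\kappa}$ a genuine C$^*$-algebra containing the path — this replaces the use of the inverse in a group.

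The main obstacle, as usual in these arguments, is the last step: verifying that the unitary path $u_t$ one writes down lies in $\cU(\fD_{\phi\oplus_{r^B_0,r^B_\infty}\kappa})$ rather than merely in $\cU(\cM(B^\rs))$, i.e.\ that $[u_t, (\phi\oplus\kappa)_X(\alpha(X))]\subset\cK(B^\rs,\beta^\rs(X))$ for every $X\in\cC$ and every $t$. This requires a careful bookkeeping of several ideal-membership facts holding only "on a dense set" or "up to compacts", patched together by the technical theorem; because the family is indexed by all of $\cC$ (reduced to $\Irr(\cC)$, which is countable by hypothesis), one must also ensure the estimates are uniform enough to survive the simultaneous treatment of all simple objects, which is precisely what the separability/$\sigma$-unitality hypotheses and Lemma \ref{lem:approxunit} are there to guarantee. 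Once this is in place, evaluating the path at $t=1$ and tracking the Cuntz-sum identifications from the first paragraph yields $\Ad(u_1)\circ(\phi\oplus_{r^B_0,r^B_\infty}\kappa) = \psi\oplus_{r^B_0,r^B_\infty}\kappa$, completing the proof.
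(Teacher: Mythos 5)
Your plan follows essentially the same route as the paper's proof: rewrite $\phi\oplus_{r^B_0,r^B_\infty}\kappa$ and $\psi\oplus_{r^B_0,r^B_\infty}\kappa$ as $\Theta$ and $\Ad(w^B_1\oplus w^{B*}_1)\circ\Theta$ respectively, observe that the naive path $w^B_t\oplus w^{B*}_t$ leaves $\fD_\Theta$ at intermediate times, and repair this with the rotation built from the element $M$ supplied by the $\cC$-equivariant Kasparov technical theorem, exactly on the Gabe--Szab\'o template. One bookkeeping slip worth fixing: $\phi\oplus_{r^B_0,r^B_\infty}\kappa$ is unitarily equivalent to $\theta^B_0\circ\Psi\oplus\theta^B_0\circ\Phi$ (using $\ev_0\circ\Psi=\phi$ and $\theta^B_1\circ\Phi=\theta^B_0\circ\Phi$), not to $\theta^B_0\circ\Phi\oplus\theta^B\circ\Psi$ as you wrote, which drops a copy of $\phi$; this does not affect the viability of the argument, since the corrected identification feeds into the same rotation step.
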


\begin{proof}
We put $\Theta\coloneqq \theta^B_0\circ \Psi \oplus_{s_1,s_2} \theta^B_1\circ \Phi=\theta^B_0\circ \Psi \oplus_{s_1,s_2} \theta^B_0\circ \Phi = \phi \oplus_{r^B_0,r^B_\infty} \kappa$ for the pair of isometries $(s_1,s_2)\coloneqq (r^B_0r^{B*}_0 + r^{B}_{\infty}r^B_0r^{B*}_{\infty},(r^B_\infty)^2)$ that generate a copy of $\cO_2$. Observe that $\psi \oplus_{r^B_0,r^B_\infty} \kappa = \Ad (w^B_1\oplus_{s_1,s_2} w^{B*}_1)\circ \Theta$. 
We first claim that $w^B_1\in \fD_{\theta^B_0\circ\Phi}\cap \fD_{\theta^B_0\circ\Psi}$. Then, it would follow that 
$w^B_t\oplus_{s_1,s_2} w^{B*}_{t'}\in \fD_\Theta$
for any $t,t'\in\{0,1\}$ as $w_0^B=1$. To see the claim, note that
\[\Ad(w^B_1)\circ\theta^B_0\circ \Phi =\theta^B_1\circ \Phi =\theta^B_0\circ \Phi\] and hence, since $(\phi,\psi)\in\bE^\cC((\alpha,\fu),(\beta^\rs,\fv^\rs))$, one has that  
\begin{align*}
\Ad(w^B_1) ((\theta^{B}_{0}\circ \Psi)_{X}(\xi)) 
={}&
(\theta^{B}_{1}\circ \Psi)_X(\xi)\\
={}&
\psi_X(\xi)\oplus_{r^B_0,r^B_\infty}(\theta^{B} \circ \Psi)_X(\xi)\\ 
\equiv{}& 
\phi_X(\xi)\oplus_{r^B_0,r^B_\infty}(\theta^{B}\circ \Psi)_X(\xi)\\ 
={}& 
(\theta^{B}_{0}\circ \Psi)_X(\xi)
\end{align*}
mod $\cK(B^{\rs},\beta^\rs(X))\oplus_{r^B_0,r^B_\infty}0$. 

Therefore, it suffices to construct a continuous path in $\cU(\fD_\Theta)$ from $1$ to $w^B_1\oplus \left(w^{B}_1\right)^*$. For this we apply Kasparov's technical theorem (\cite[Theorem B.1]{ARKIKU23}) to the following septuple $(I,\cJ, \cJ_i,\sigma_i, \cA_1, \cA_{2,i}, \Delta_i )$: 
\begin{itemize}[leftmargin=*]%\setlength{\leftskip}{-2em}
\item 
$I = \Irr(\cC) \cup \{ 0 \}$, 
\item
$\cJ = \cJ_0 \coloneqq B^\rs$ with $\sigma_0\coloneqq\id_{B^\rs}$,
\item
$\cJ_X \coloneqq \cK(\beta^\rs(X)\oplus B^\rs)$ with $\sigma_X\coloneqq \beta^\rs_X\oplus\id_{B^\rs}\colon B^\rs\to \cK(\beta^\rs(X)\oplus B^\rs)$, where $\beta^\rs_X\colon B^\rs\to\cK(\beta^\rs(X))$ is the left $B^\rs$-action on $\beta^\rs(X)$ for $X\in\Irr(\cC)$, 
\item
$\cA_1 \coloneqq \theta^B_{0,1_\cC}(B^\rs\otimes C[0,1]) + B^\rs \subset \cM(B^\rs)$, 
\item
$\cA_{2,0} \coloneqq C^*
\bigl( (\Ad(w^B_t) - \id_{B^\rs}) \theta^B_{0,1_\cC} (b) \,\big|\, t\in[0,1], b\in \Phi_{1_{\cC}}(A)\cup\Psi_{1_{\cC}}(A) \bigr) \subset \cM(B^\rs)$, 
\item
$\cA_{2,X} \coloneqq C^*\biggl( (\Ad(w^B_t) - \id_{\cJ_X}) \biggl(\begin{array}{cc}0&\theta^{B}_{0,X}(\xi)\\\theta^{B}_{0,X}(\xi)^*&0\end{array}\biggr) \,\bigg|\, t\in[0,1], \xi\in \Phi_X(\alpha(X))\cup\Psi_X(\alpha(X)) \biggr) \subset \cL(\beta^\rs(X)\oplus B^\rs)$ for $X\in\Irr(\cC)$, 
\item
$\Delta_0 \coloneqq \overline{\spa}\{\theta^B_{0,1_\cC}(\Phi(A)) \cup \theta^B_{0,1_\cC}(\Psi(A)) \cup \{ w^B_t \,|\, t\in[0,1] \}\} \subset \cM(B^\rs)$, and 
\item
$\Delta_X \coloneqq \biggl\{ \biggl(\begin{array}{cc}0&\theta^{B}_{0,X}(\xi)\\\theta^{B}_{0,X}(\xi)^*&0\end{array}\biggr) \,\bigg|\, \xi\in\Phi_X(\alpha(X))\cup\Psi_X(\alpha(X)) \biggr\} \subset \cL(\beta^\rs(X)\oplus B^\rs)$ for $X\in\Irr(\cC)$. 
\end{itemize}
Where we have used the extension of $\theta_{0,X}^B$ in the notation above as defined in Remark \ref{rmk:compnondegcocyclereps}. After checking the conditions required in \cite[Theorem B.1]{ARKIKU23} and noting that $\sigma_X$ extends canonically to a map from $\cM(B^\rs)$,\footnote{This follows from nondegeneracy of $\beta^\rs(X)$.} we obtain $M\in\cM(B^\rs)$ with $0\leq M\leq 1$ such that 
\begin{align*}
    &
    M\cA_1,\ (1-M)\cA_{2,0}, [M,\Delta_0] \subset B^\rs,
\end{align*}
\begin{align}\label{eq:lem:KKarantine1}
    &\sigma_X(1-M)\cA_{2,X}\subset \cJ_X,
\end{align}
\begin{align}\label{eq:lem:KKarantine2}
    &[\sigma_X(M),\Delta_X]\subset \cJ_X
\end{align}
for $X\in\Irr(\cC)$. 
Here, note that \eqref{eq:lem:KKarantine2} implies $M\in \fD_{\theta^B_0\circ\Psi}\cap\fD_{\theta^B_0\circ\Phi}$. 
Moreover, as $M\cA_1\subset B^\rs$, or equivalently $\cA_1 M\subset B^\rs$, one has that 
\begin{align}\label{eq:lem:KKarantine3}\begin{aligned}
    &\theta^B_{0,X}(\beta^\rs(X)\boxtimes C[0,1])\lhd M 
    \\={}& \theta^B_{0,X}(\beta^\rs(X)\boxtimes C[0,1])\lhd \theta^B_{0,1_\cC}(B^\rs\boxtimes C[0,1])M 
    \\\subset{}& \cK(B^s,\beta^\rs(X)) 
\end{aligned}\end{align}
by applying Cohen factorisation.

We now check the conditions of \cite[Theorem B.1]{ARKIKU23}.
First, as \[[w^B_t,\theta^B_{0,1_\cC}(B^\rs\otimes C[0,1])]\subset B^\rs,\] we see that for $b\in\Phi_{1_{\cC}}(A)\cup\Psi_{1_{\cC}}(A)$ and $x\in B^\rs\otimes C[0,1]$, 
\begin{align*}
&
\theta^B_{0,1_\cC}(x) [w^B_t,\theta^B_{0,1_\cC}(b)] 
=
[ \theta^B_{0,1_\cC}(x) , w^B_t ]\theta^B_{0,1_\cC}(b)
+ [ w^B_t , \theta^B_{0,1_\cC}(x b) ] 
\in
B^\rs, 
\end{align*}
which shows that $\cA_1\cA_{2,0}\subset B^\rs$.
Then, it is not hard to see 
\[[ \cA_1 , \Delta_0 ] 
\subset \theta^B_{0,1_\cC}(B^\rs\otimes C[0,1]) + B^\rs 
=\cA_1.\]
Moreover, for $X\in\Irr(\cC)$, $\xi\in\Phi_X(\alpha(X))\cup\Psi_X(\alpha(X))$, and $x\in B^\rs\otimes C[0,1]$, using $[w^B_t,\theta^{B}_{0,1_\cC}(x)] \in B^\rs$ and $x\rhd \xi\in \cK(B^\rs\otimes C[0,1],\beta^\rs(X)\boxtimes C[0,1])$, we see 
\begin{align*}
&
\theta^{B}_{0,1_\cC}(x)\rhd [w^B_t , \theta^{B}_{0,X}(\xi)] 
\\={}&
[\theta^{B}_{0,1_\cC}(x) ,w^B_t] \rhd \theta^{B}_{0,X}(\xi) + [w^B_t , \theta^{B}_{0,X}(x \rhd \xi) ]
\\\in{}&
\cK(B^s,\beta^\rs(X)), 
\end{align*}
and similarly $[w^B_t,\theta^B_{0,X}(\xi)] \lhd \theta^{B}_{0,1_\cC}(x) \in\cK(B^\rs,\beta^\rs(X))$, which shows that $\sigma_X(\cA_1)\cA_{2,X}\subset \cJ_X$.
Finally, for $x\in B^\rs\otimes C[0,1]$, $X\in\Irr(\cC)$, and $\xi\in\Phi_X(\alpha(X))\cup \Psi_X(\alpha(X))$, by using Cohen factorisation, there are $y,y',z,z'\in B^\rs\otimes C[0,1]$ and $\eta,\eta'\in \cK(B^\rs\otimes C[0,1],\beta^\rs(X)\boxtimes C[0,1])$ such that $x\rhd \xi=y\rhd\eta\lhd z$ and $\xi\lhd x =y'\rhd \eta'\lhd z'$ in $\cK(B^\rs,\beta^\rs(X))$. 
This shows 
\begin{align*}
    &
    [\theta^B_{0,1_\cC}(x),\theta^B_{0,X}(\xi)] 
    = \theta^B_{0,X} (y\rhd\eta\lhd z - y'\rhd \eta'\lhd z') 
    \\\in{}&
    \mathrm{span}\ \cA_1\rhd \cL(B^\rs,\beta^\rs(X))\lhd \cA_1
\end{align*}
and thus 
\[ [\sigma_X(\cA_1),\Delta_X] \subset \overline{\mathrm{span}}\ \sigma_X(\cA_1)\cM(\cJ_X)\sigma_X(\cA_1) . \]

We set $U\coloneqq\Ad (s_1,s_2) \left(\begin{array}{cc}\sqrt{1-M}&\sqrt{M}\\-\sqrt{M}&\sqrt{1-M}\end{array}\right)$ and claim that \[(1\oplus_{s_1,s_2} w_t^*)U(w_t\oplus_{s_1,s_2} 1)\in\cU(\fD_{\Theta}).\]  
Indeed, for $X\in\cC$ and $\xi\in \alpha(X)$, using $[M,w_t^B] \in [M,\Delta_0]\subset B^\rs$, we have 
\begin{align*}
&
\left(\begin{array}{cc}\sqrt{1-M}&\sqrt{M}\\-\sqrt{M}&\sqrt{1-M}\end{array}\right)
\rhd 
\Ad\left(\begin{array}{cc}w^B_t&0\\0&1\end{array}\right)
\left(\begin{array}{cc}\theta^{B}_{0,X}\Psi_X(\xi)&0\\0&\theta^{B}_{0,X}\Phi_X(\xi)\end{array}\right)
\\={}&
\left(\begin{array}{cc} \sqrt{1-M} w^B_t\rhd \theta^{B}_{0,X}\Psi_X(\xi) \lhd w^{B*}_t & \sqrt{M}\rhd \theta^{B}_{0,X}\Phi_X(\xi) \\ -\sqrt{M}w^B_t\rhd \theta^{B}_{0,X}\Psi_X(\xi) \lhd w^{B*}_t & \sqrt{1-M}\rhd \theta^{B}_{0,X}\Phi_X(\xi) \end{array}\right)
%\\\equiv{}&
%\left(\begin{array}{cc} w^B_t\sqrt{1-M}\rhd \theta^{B}_{0,X}\Psi_X(\xi) \lhd w^{B*}_t & \sqrt{M}\rhd \theta^{B}_{0,X}\Phi_X(\xi) \\ -w^B_t\sqrt{M}\rhd \theta^{B}_{0,X}\Psi_X(\xi) \lhd w^{B*}_t & \sqrt{1-M}\rhd \theta^{B}_{0,X}\Phi_X(\xi) \end{array}\right)
\\\stackrel{\eqref{eq:lem:KKarantine2}}{\equiv}&
\left(\begin{array}{cc} w^B_t\rhd \theta^{B}_{0,X}\Psi_X(\xi)\lhd w^{B*}_t\sqrt{1-M} & \theta^{B}_{0,X}\Phi_X(\xi) \lhd \sqrt{M} \\ -w^B_t\rhd \theta^{B}_{0,X}\Psi_X(\xi) \lhd w^{B*}_t\sqrt{M} & \theta^{B}_{0,X}\Phi_X(\xi) \lhd \sqrt{1-M} \end{array}\right)
\\\equiv{}&
\left(\begin{array}{cc} \theta^{B}_{0,X}\Psi_X(\xi) \lhd \sqrt{1-M} & \theta^{B}_{0,X}\Psi_X(\xi) \lhd \sqrt{M} \\ -w^B_t\rhd \theta^{B}_{0,X}\Phi_X(\xi) \lhd w^{B*}_t\sqrt{M} & w^B_t\rhd \theta^{B}_{0,X}\Phi_X(\xi) \lhd w^{B*}_t\sqrt{1-M} \end{array}\right)
\\+{}&
\left(\begin{array}{cc} 0 & \theta^{B}_{0,X}(\Phi_X(\xi) - \Psi_X(\xi)) \lhd \sqrt{M} \\ -w^B_t\rhd \theta^{B}_{0,X} (\Psi_X(\xi) - \Phi_X(\xi)) \lhd w^{B*}_t\sqrt{M} & 0 \end{array}\right)
\\\stackrel{\eqref{eq:lem:KKarantine3}}{\equiv}&
\left(\begin{array}{cc} \theta^{B}_{0,X}\Psi_X(\xi) \lhd \sqrt{1-M} & \theta^{B}_{0,X}\Psi_X(\xi) \lhd \sqrt{M} \\ -w^B_t\rhd \theta^{B}_{0,X}\Phi_X(\xi) \lhd w^{B*}_t\sqrt{M} & w^B_t\rhd \theta^{B}_{0,X}\Phi_X(\xi) \lhd w^{B*}_t\sqrt{1-M} \end{array}\right)
\\={}&
\left(\Ad\left(\begin{array}{cc}1&0\\0&w^B_t\end{array}\right)
\left(\begin{array}{cc}\theta^{B}_{0,X}\Psi_X(\xi)&0\\0&\theta^{B}_{0,X}\Phi_X(\xi)\end{array}\right)
\right) \lhd \left(\begin{array}{cc}\sqrt{1-M}&\sqrt{M}\\-\sqrt{M}&\sqrt{1-M}\end{array}\right)
\end{align*}
mod $\cK(B^\rs,\beta^\rs(X))\otimes M_2$, where in the third equivalence we used the following equivalences mod $\cK(B^\rs,\beta^\rs(X))$, 
\begin{align*}
    w^B_t\rhd \theta^{B}_{0,X}\Psi_X(\xi)\lhd w^{B*}_t\sqrt{1-M} 
    &\equiv 
    \theta^{B}_{0,X}\Psi_X(\xi) \lhd\sqrt{1-M} ,
    \\
    w^B_t\rhd \theta^{B}_{0,X}\Phi_X(\xi) \lhd w^{B*}_t\sqrt{1-M}
    &\equiv
    \theta^{B}_{0,X}\Phi_X(\xi) \lhd \sqrt{1-M} ,
\end{align*}
which hold by focusing on the upper right entry of the inclusion $\cA_{2,X}\sigma_X(\sqrt{1-M})\subset \cJ_X$ following from \eqref{eq:lem:KKarantine1}. 
As in the proof of \cite[Lemma 2.8]{GASZ22}, we can define the desired norm-continuous path $V\colon [0,3]\to\cU(\fD_{\Theta})$ by 
\[V_t=\left\{\begin{array}{cl}
\exp(t\log U) & (\text{if $0\leq t\leq 1$}) \\
(1\oplus_{s}w_{t-1}^{B*})U(w_{t-1}^B\oplus_{s}1) & (\text{if $1\leq t\leq 2$}) \\
(1\oplus_{s}w_1^{B*})\exp((3-t)\log U)(w_1^B\oplus_{s}1) & (\text{if $2\leq t\leq 3$}) \\
\end{array}\right. ,\]
where $\log U\in \fD_{\Theta}$ is well-defined with an appropriate holomorphic branch of $\log\colon\{z\in\bC^\times \,|\, \mathop{\mathrm{Re}}z\geq 0 \}\to \bC$ thanks to the fact that $U+U^*\geq 0$. 
\end{proof}

\begin{theorem}\label{thm:KKCtostableoh}
Let $(A,\alpha,\fu)$ and $(B,\beta,\fv)$ be $\cC$-C$^*$-algebras with $A$ separable and $B$ $\sigma$-unital, and $(\phi,\psi)\in\bE^\cC((\alpha,\fu),(\beta,\fv))$. 
If $[\phi,\psi]=0\in\KK^\cC((\alpha,\fu),(\beta,\fv))$, then $\phi$ and $\psi$ are stably $\cC$-operator homotopic. 
\end{theorem}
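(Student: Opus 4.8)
The plan is to reduce the statement, via the Cuntz--Thomsen picture established in \cref{thm:Cuntzpicture}, to a direct application of \cref{lem:KKarantine}. By that theorem together with \cref{lemma: CuntzSumGroup}, the group $\bE^\cC((\alpha,\fu),(\beta,\fv))/{\simeq}$ is canonically isomorphic to $\KK^\cC((\alpha,\fu),(\beta,\fv))$, so the hypothesis $[\phi,\psi]=0$ means that $(\phi,\psi)$ is homotopic, through $\cC$-Cuntz pairs, to a degenerate pair; that is, there is some $(\Phi',\Psi')\in\bE^\cC((\alpha,\fu),(\beta\otimes\id_{C[0,1]},\fv\otimes1))$ with $\ev_0\circ\Phi'=\phi$, $\ev_0\circ\Psi'=\psi$ and $\ev_1\circ\Phi'=\ev_1\circ\Psi'$.

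The next step is to reshape this homotopy into the rigid form demanded by \cref{lem:KKarantine}, which requires a Cuntz pair $(\Phi,\Psi)$ over $C[0,1]$ with $\ev_0\circ\Phi=\ev_1\circ\Phi=\ev_0\circ\Psi$. I would obtain this by a \emph{doubling-back} trick: writing $\overline{(-)}$ for the reversal of the $C[0,1]$-parameter, let $\Phi$ be the concatenation of $\Phi'$ with $\overline{\Phi'}$, and let $\Psi$ be the concatenation of $\Phi'$ with $\overline{\Psi'}$. Since $\Phi'$ and $\Psi'$ agree at parameter $1$, both concatenations are norm-continuous; and since $\Phi'_X(\xi)-\Psi'_X(\xi)$ lies in $\cK(B^\rs\otimes C[0,1],\beta^\rs(X)\otimes C[0,1])$ while $\Phi$ and $\Psi$ coincide (with $\Phi'$) on the first half of the interval, the pair $(\Phi,\Psi)$ again lies in $\bE^\cC((\alpha,\fu),(\beta\otimes\id_{C[0,1]},\fv\otimes1))$. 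By construction $\ev_0\circ\Phi=\ev_1\circ\Phi=\ev_0\circ\Psi=\phi$ and $\ev_1\circ\Psi=\psi$.

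Finally, applying \cref{lem:KKarantine} to $(\Phi,\Psi)$ produces a $\cC$-cocycle representation $\kappa\coloneqq\theta^B\circ\Psi\oplus_{r^B_0,r^B_\infty}\theta^B_0\circ\Phi$ together with a $\cC$-operator homotopy between $\phi\oplus_{r^B_0,r^B_\infty}\kappa$ and $\psi\oplus_{r^B_0,r^B_\infty}\kappa$. As $r^B_0$ and $r^B_\infty$ generate a copy of $\cO_2$ in $\cM(B^\rs)$ (and, conjugating by the unitary implementing a change of such isometries, one has $\fD_{\Ad(u)\circ\rho}=u\fD_\rho u^*$, so the $\cC$-operator homotopy class does not depend on that choice), this is exactly the assertion that $\phi$ and $\psi$ are stably $\cC$-operator homotopic, with stabiliser $\kappa$. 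The real content lies entirely in \cref{lem:KKarantine} --- which runs through the $\cC$-equivariant Kasparov technical theorem --- so I expect the only point in the deduction above needing genuine care to be the verification in the middle step that the doubling-back concatenation stays inside $\bE^\cC$ over $C[0,1]$ (the compactness condition must survive the reparametrisation and gluing) and matches the boundary pattern $\ev_0\circ\Phi=\ev_1\circ\Phi=\ev_0\circ\Psi$.
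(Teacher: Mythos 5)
Your proposal is correct and follows the paper's proof in essence: both reduce the statement to \cref{lem:KKarantine} applied to a homotopy of $\cC$-Cuntz pairs with boundary pattern $\ev_0\circ\Phi=\ev_1\circ\Phi=\ev_0\circ\Psi=\phi$ and $\ev_1\circ\Psi=\psi$. The paper obtains such a homotopy in one line as the element witnessing $(\phi,\phi)\simeq(\phi,\psi)$ (both classes vanish by \cref{thm:Cuntzpicture} and \cref{lemma: CuntzSumGroup}), whereas your doubling-back concatenation constructs it explicitly from a null-homotopy of $(\phi,\psi)$ --- a harmless variation that makes transparent the same gluing arguments the paper's shorter formulation implicitly relies on.
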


\begin{proof}
The conclusion follows by applying \cref{lem:KKarantine} to the element in $\bE^{\cC}((\alpha,\fu),(\beta\otimes\id_{C[0,1]},\fv\otimes 1))$ inducing the homotopy between $[\phi,\phi]$ and $[\phi,\psi].$ 
\end{proof}

\section{Absorbing \texorpdfstring{$\cC$}{C}-cocycle representations}\label{sec:absorbing}

In this section we will show the existence of absorbing representations between separable $\cC$-C$^*$-algebras. Before we introduce absorbing representations we need a notion of asymptotic unitary equivalence analogous to \cite[Definition 2.1]{DADEIL01} (see also \cite[Definition 3.1]{GASZ22}).

\begin{defn}\label{defn:asympunitaryeq}
    Let $\phi,\psi\colon(A,\alpha,\fu)\rightarrow (B,\beta,\fv)$ be $\cC$-cocycle representations.
    We write $\phi\sim_{\cC}\psi$ if there exists a norm-continuous path $u\colon[1,\infty)\rightarrow \cU(\cM(B))$ such that for all $X\in \cC$ and $\xi\in \alpha(X)$
    \begin{enumerate}[label=(\roman*)]
        \item $\psi_X(\xi)=\lim_{t\rightarrow\infty} u_t\rhd \phi_X(\xi)\lhd u_t^*$, and \label{item:conv}
       \item $\psi_X(\xi)-u_t\rhd \phi_X(\xi)\lhd u_t^*\in \cK(B,\beta(X))$ for all $t\geq 1$, \label{item:Dphiu}
    \end{enumerate}
    where the limit in condition \ref{item:conv} is with respect to the norm topology on $\cL(B,\beta(X))$.
\end{defn}
It is straightforward to check that the relation $\sim_{\cC}$ is an equivalence relation. Furthermore, if $\theta,\phi,\psi\colon(A,\alpha,\fu)\rightarrow (B,\beta,\fv)$ are $\cC$-cocycle representations with $\theta\sim_{\cC}\phi$ through a unitary path $u_t$ and $B$ is stable, then the path given by $u_t\oplus 1$ witnesses that $\theta\oplus \psi\sim_{\cC}\phi\oplus \psi$.

\begin{rmk}\label{rmk: AsympGroupCase}
In the case of genuine group actions by a countable discrete group $G$ and cocycle representations $(\phi,\mathbbm{u}),(\psi,\mathbbm{v})$ in the sense of \cite{GASZ22}, the relation $\sim_\cC$ defined above is genuinely different than $\sim_{\asy}$ from \cite[Definition 3.1]{GASZ22}. Using Remark \ref{rmk: GroupCase}, we see that $\phi\sim_\cC\psi$ if there exists a norm-continuous path $u\colon[1,\infty)\rightarrow \cU(\cM(B))$ such that for any $g\in G$ and $a\in A$, we have that $$\mathbbm{v}_g^*\psi(a)=\lim_{t\rightarrow\infty} \beta_g(u_t) \mathbbm{u}_g^*\phi(a) u_t^*$$ and 
$$\mathbbm{v}_g^*\psi(a)- \beta_g(u_t) \mathbbm{u}_g^*\phi(a) u_t^*\in B, \ t\geq 1.$$A straightforward calculation shows that this is equivalent to the following conditions to hold for any $a\in A$, $g\in G$, and $t\geq 1$:
\begin{itemize}
    \item $\psi(a)=\lim_{t\rightarrow\infty} u_t\phi(a) u_t^*$;
    \item $\psi(a)\mathbbm{v}_g=\lim_{t\rightarrow\infty} \psi(a)u_t\mathbbm{u}_g\beta_g(u_t^*)$;
    \item $\psi(a)-u_t\phi(a)u_t^*\in B$;
    \item $\psi(a)(\mathbbm{v}_g-u_t\mathbbm{u}_g\beta_g(u_t^*))\in B$.
\end{itemize}
For example if $\phi$ and $\psi$ are the zero $^*$-homomorphisms and $\bu$ is a $\beta$-1-cocycle then $(0,\bu)\sim_\cC(0,\bf{1})$ always as all the bullet points above are vacuous (whereby $\bf{1}$ denotes the trivial $\beta$-1-cocycle).
Whereas, if $B$ is stable then $(0,\bu)\sim_{\asy}(0,\bf{1})$ implies that $\bu$ is an asymptotic coboundary in the sense of \cite[Definition~2.2]{SZ17}.
\end{rmk}

\begin{rmk}\label{rmk: asympcomments}
If $\phi,\psi\colon (A,\alpha,\fu)\rightarrow (B^\rs,\beta^\rs,\fv^\rs)$ and the pair $(\phi,\psi)$ forms a $\cC$-Cuntz pair from $(\alpha,\fu)$ to $(\beta,\fv)$, then condition \ref{item:Dphiu} above is precisely saying that $u_t\in \fD_{\phi}$ for all $t\geq 1$. In this case, we have that $[\Ad(u_1)\circ \phi,\psi]=0$ via the homotopy
\begin{equation*}
    (\phi^{(t)},\psi^{(t)})=\begin{cases}
        (\psi,\psi),\quad \quad \quad \quad t=0,\\
        (\Ad(u_{1/t})\phi,\psi),\quad t\in (0,1].
    \end{cases}
\end{equation*}\end{rmk}
Note that by Proposition \ref{prop:phiXmaps} it suffices to quantify  only over $X\in \Irr(\cC)$ in Definition \ref{defn:asympunitaryeq}. We may now introduce absorbing representations.
\begin{defn}
  Let $(A,\alpha,\fu)$ and $(B,\beta,\fv)$ be $\cC$-C$^*$-algebras with $B$ stable and let $\fC$ be a class of $\cC$-cocycle representations from $(\alpha,\fu)$ to $(\beta,\fv)$. We say that a $\cC$-cocycle representation $\theta\colon(A,\alpha,\fu)\rightarrow (B,\beta,\fv)$ is \emph{absorbing for $\fC$} if for all $\phi\in\fC$ 
  \[\theta\oplus\phi\sim_{\cC}\theta.\] 
  If $\fC$ is the class of all $\cC$-cocycle representations from $(\alpha,\fu)$ to $(\beta,\fv)$, we simply call $\theta$ \emph{absorbing}.
\end{defn}

\begin{rmk}\label{rmk:uniquenessabsorbing}
    Note that absorbing representations, if they exist, are unique up to $\sim_{\cC}$. Indeed if $\theta,\vartheta\colon(A,\alpha,\fu)\rightarrow (B,\beta,\fv)$ are two absorbing representations then $\theta\sim_{\cC} \theta\oplus \vartheta\sim_{\cC} \vartheta$.
\end{rmk}

A key technical tool to construct absorbing representations is the notion of an infinite direct sum of $\cC$-cocycle representations. We briefly outline the construction. Let $(A,\alpha,\fu)$ and $(B,\beta,\fv)$ be $\cC$-C$^*$-algebras with $B$ stable and $\phi^{(n)}\colon(A,\alpha,\fu)\rightarrow (B,\beta,\fv)$ be a sequence of $\cC$-cocycle representations for $n\in \bN$. Let $r_n\in \cM(B)$ be a sequence of isometries such that $\sum_{n=1}^\infty r_nr_n^*=1$ in the strict topology. We then define the infinite direct sum of $\{\phi^{(n)}\}_{n\in \bN}$ by \[\bigoplus_{n=1}^\infty \phi^{(n)}_X=\sum_{n=1}^\infty r_n\rhd \phi^{(n)}_X\lhd r_n^*, \ X\in\cC.\]  
To see that this is indeed well-defined, we first need to show that \[\bigoplus_{n=1}^\infty \phi^{(n)}_X(\xi)\in\cL(B,\beta(X))\] for any $X\in\cC$ and $\xi\in\alpha(X)$. Consider the unitary $U\in \cL(B,B^{\infty})$ defined by $b\mapsto (r_1^*b,r_2^*b,\ldots)$ and the map $\Phi_X(\xi)\in \cL(B^{\infty},\beta(X)^{\infty})$ for $\xi\in \alpha(X)$ defined by $\Phi_X(\xi)(b_1,b_2,\ldots)=(\phi_X^{(1)}(\xi)(b_1),\phi_X^{(2)}(\xi)(b_2),\ldots)$. Then  \[\bigoplus_{n=1}^\infty\phi_X^{(n)}(\xi)=(U^*\otimes \id_{\beta(X)})\circ \Phi(\xi)\circ U.\] Notice that for this last composition to make sense we have used the canonical identification of $\beta(X)^{\infty}$ with $B^{\infty}\otimes \beta(X)$. Furthermore, a standard computation shows that $\bigoplus_{n=1}^\infty\phi^{(n)}$ satisfies the conditions of Proposition \ref{prop:phiXmaps} and hence is a $\cC$-cocycle representation from $(A,\alpha,\fu)$ to $(B,\beta,\fv)$. Moreover, as in the case of finite direct sums, $\bigoplus_{n=1}^\infty\phi^{(n)}(\xi)$ only depends on the sequence $r_n$ up to unitary equivalence. Precisely, if $v_n\in\mathcal{M}(B)$ is another sequence of isometries satisfying the same relation, then the unitary defined by $u =\sum\limits_{n=1}^\infty r_nv_n^*$ implements this equivalence. If the sequence $\phi^{(n)}$ is constant and equal to a $\cC$-cocycle representation $\phi$ for all $n\in \bN$ then we denote $\bigoplus_{n=1}^\infty \phi$ by $\phi^{\infty}$ and call it the \emph{infinite repeat} of $\phi$.
%\begin{rmk}\label{rmk:actionnormvsevaluationnorm}
    %Note that if $T\in \cL(B,E)$ with $E$ a non-degenerate $B$-correspondence and $b,c\in B$, then the norm of the operator $b\rhd T\lhd c\in \cL(B,E)$ coincides with the norm of the element $b\rhd T(c)$ in $E$. Indeed 
    %\[\|b\rhd T\lhd c\|=\sup_{a\in A_1}\|b\rhd T\lhd c(a)\|=\sup_{a\in A_1}\|b\rhd T(c)\lhd a\|\leq \|b\rhd T(c)\|,\] 
    %and conversely, taking an approximate unit of contractions $u_{\lambda}\in B$, it follows from the continuity of $T$ that
    %\[\|b\rhd T(c)\|=\lim_{\lambda\in \Lambda}\|b\rhd T(cu_{\lambda})\|\leq \sup_{a\in A_1}\|b\rhd T\lhd c(a)\|=\|b\rhd T\lhd c\|.\] 
    %Following the same argument it is also true that if you take $n\in \bN$, $T_i\in \cL(B,E)$ and $b_i$, $c_i\in B$ for $1\leq i \leq n$ the equality
    %$\|\sum_{i=1}^n b_i\rhd T_i\lhd c_i\|=\|\sum_{i=1}^nb_i\rhd T_i(c_i)\|$ holds.
    %We will use these norm equalities often during the remainder of this section.
%\end{rmk}

\par We may now introduce two notions of $\cC$-weak containment of $\cC$-cocycle representations that will be crucial in showing the existence of absorbing $\cC$-cocycle representations. These notions are motivated by \cite[Definition 3.3]{GASZ22} and \cite[Definition 2.11]{DADEIL02}(see also \cite[Definition 3.5]{GASZ22}). Recall in the definitions below that $B_1$ is the set of positive contractions in $B$.

\begin{defn}\label{defn:weakcontainment}
 Let $\phi\colon(A,\alpha,\fu)\rightarrow (B,\beta,\fv)$ be a $\cC$-cocycle representation and $\fC$ be a family of $\cC$-cocycle representations from $(A,\alpha,\fu)$ to $(B,\beta,\fv)$. We say that $\phi$ is \emph{$\cC$-weakly contained} in $\fC$, denoted $\phi\preccurlyeq_{\cC} \fC$, if for all $\varepsilon>0$, $b\in B_1$, finite $K\subset \Irr(\cC)$, and compact sets $\cF_X\subset \alpha(X)$ for $X\in K$, there exist $\psi^{(1)}, \psi^{(2)},\ldots ,\psi^{(n)}\in \fC$ and elements $\{c_{j,k} \,|\, j=1,2,\ldots ,n,\ k=1,2,\ldots ,N\}\subset B$ such that for all $X\in K$
 \begin{equation}\label{eqn:weak}
     \max_{\xi\in \cF_X}\|b^*\rhd \phi_X(\xi)\lhd b-\sum_{j=1}^n\sum_{k=1}^N c_{j,k}^*\rhd \psi_X^{(j)}(\xi)\lhd c_{j,k}\|\leq \varepsilon
 \end{equation}
 and
 \begin{equation}\label{eqn:unitweak}
     \|b^*b-\sum_{j=1}^n\sum_{k=1}^Nc_{j,k}^*c_{j,k}\|\leq \varepsilon.
 \end{equation}
\end{defn}

\begin{defn}\label{defn:weakcontaimentinfinity}
    Let $\phi,\psi\colon(A,\alpha,\fu)\rightarrow (B,\beta,\fv)$ be $\cC$-cocycle representations. We say that $\phi$ is \emph{$\cC$-contained in} $\psi$ \emph{at infinity} if for all $\varepsilon>0$, $b\in B_1$, finite $K\subset \Irr(\cC)$, and compact sets $\cF_X\subset \alpha(X)$ for $X\in K$, there exists an element $x\in B$ such that for all $X\in K$
     \begin{equation}\label{eqn:weakinf}
     \max_{\xi\in \cF_X}\|b^*\rhd \phi_X(\xi)\lhd b-x^*\rhd \psi_X(\xi)\lhd x\|\leq \varepsilon,
 \end{equation}
 \begin{equation}\label{eqn:unitcondweak}
      \|b^*b-x^*x\|\leq \varepsilon,
 \end{equation}
 and
 \begin{equation}\label{eqn:orthogonalityweak}
    \|x^*b\|\leq \varepsilon.
 \end{equation}
\end{defn}
In the definitions above we could also have taken $b\in B$ by rescaling it by its norm.
\begin{rmk}
When restricted to group actions, with $(A,\alpha)$ being $\bC$ with the trivial $G$-action, and $(B,\beta)$ being $\bK$ with the trivial $G$-action, $\Hilb(G)^{\op}$-cocycle representations $\bC\rightarrow \bK$, with a non-degenerate underlying $^*$-homomorphism, coincide with unitary representations of $G$ on $\ell^2(\bN)$. In this case, our notion of $\cC$-weak containment coincides with that of weak containment of group representations (see \cite[Remark 3.4]{GASZ22}). We warn the reader though that our notions of $\cC$-weak containment and $\cC$-containment at infinity do not seem to coincide with those of \cite{GASZ22} when restricted to group actions and cocycle representations between them; not even a priori in the case that the cocycle representations are non-degenerate. Precisely, if we take $(\phi,\mathbbm{u})$ to be a cocycle representation in the sense of \cite{GASZ22} and $\mathfrak{C}$ be a class of cocycle representations in the sense of \cite{GASZ22}, then, with the same choices of sets as in Definition \ref{defn:weakcontainment}, $\phi\preccurlyeq_{\cC} \fC$ if there exist $(\psi^{(1)},\mathbbm{v}^{(1)}), (\psi^{(2)},\mathbbm{v}^{(2)}),\ldots ,(\psi^{(n)},\mathbbm{v}^{(n)})\in \fC$ and elements $\{c_{j,k} \,|\, j=1,2,\ldots ,n,\ k=1,2,\ldots ,N\}\subset B$ such that for all $g\in K$
 \begin{equation*}
     \max_{a\in \cF}\|\beta_g(b^*)\mathbbm{u}_g^*\phi(a)b-\sum_{j=1}^n\sum_{k=1}^N \beta_g(c_{j,k}^*)(\mathbbm{v}_g^{(j)})^*\psi^{(j)}(a) c_{j,k}\|\leq \varepsilon
 \end{equation*}
 and \eqref{eqn:unitweak} holds. Likewise, one can see that $\cC$-containment at infinity restricts to a different notion in general than the one in \cite[Definition 3.5]{GASZ22}. The same example as considered in Remark \ref{rmk: AsympGroupCase} shows that these two notions are genuinely different to those considered in \cite{GASZ22} in the case of degenerate cocycle representations.

Finally, we also note that conditions \eqref{eqn:unitweak} and \eqref{eqn:unitcondweak} can be removed if the $\cC$-cocycle representations considered in the hypothesis are unital. 
\end{rmk}

If the class $\fC$ in Definition \ref{defn:weakcontainment} consists of one $\cC$-cocycle representation $\psi$ then we simply write $\phi\preccurlyeq_{\cC} \psi$. In this case $\preccurlyeq_{\cC}$ is reflexive and transitive. Clearly $\cC$-containment at infinity implies $\cC$-weak containment.

 \begin{rmk}\label{rmk:simplificationsweak}
     To show either $\cC$-weak containment or $\cC$-containment at infinity, it suffices to quantify only over compact sets $\cF_X\subset \alpha(X)_1$. Moreover, one may assume that $c_{j,k}$ and $x$ in Definition \ref{defn:weakcontainment} and Definition \ref{defn:weakcontaimentinfinity} are contractions. By considering approximate units and using that each $\beta(X)$ is non-degenerate, Definition \ref{defn:weakcontaimentinfinity} is equivalent to the a priori stronger statement as follows:
     \par 
     For all $\varepsilon>0$, $b\in B_1$, finite $K\subset\Irr(\cC)$, compact sets $\cF_X\subset \alpha(X)$ and  $\cG_X\subset \beta(X)$ for $X\in K$ one may choose $x$ satisfying \eqref{eqn:weakinf}, \eqref{eqn:unitcondweak}, \eqref{eqn:orthogonalityweak} and that $\sup_{\eta\in \cG_X}\|x^*\rhd \eta\|<\varepsilon$ for all $X\in K$. 
     
     To see this, note that using an approximate unit of $B$ we can take a contractive positive element $e\in B$ such that $\|eb- b\|<\frac{\varepsilon}{3+3\|\xi \|}$ and $\| e\eta -\eta \|<\frac{\varepsilon}{3}$ for all $X\in K$, $\xi\in\cF_X$, and $\eta\in\cG_X$. Applying the original Definition \ref{defn:weakcontaimentinfinity} to $(e,\varepsilon/3)$ instead of $(b,\varepsilon)$ we obtain $y\in B$ such that 
     \begin{equation*}
     \max_{\xi\in \cF_X}\|e^*\rhd \phi_X(\xi)\lhd e-y^*\rhd \psi_X(\xi)\lhd y\|\leq \varepsilon/3,
     \end{equation*}
     $\|e^*e-y^*y\|\leq \varepsilon/3$,
     and
     $\|y^*e\|\leq \varepsilon/3$.
     Then, $x\coloneqq yb$ satisfies the desired conditions. 
 \end{rmk}

\begin{rmk}\label{rmk:topologyHomcC}
For $\cC$-C$^*$-algebras $(A,\alpha,\fu)$ and $(B,\beta,\fv)$, we equip each $\cB(\alpha(X),\cL(B,\beta(X)))$ with the topology given by the family of seminorms $\phi_X \mapsto \| \phi_X(\xi) (b) \|$ over all $\xi\in \alpha(X), b\in B$, which induces a topology on $\mathfrak{X}\coloneqq \prod_{X\in\Irr(\cC)}\cB(\alpha(X),\cL(B,\beta(X)))$ by the product topology. Therefore, we obtain a topology on $\Hom^\cC((\alpha,\fu),(\beta,\fv))$ via the canonical injection into $\mathfrak{X}$. 
If $A$ and $B$ are separable, this topology on the unit ball of $\cB(\alpha(X),\cL(B,\beta(X)))$ is second countable and metrisable when $\cB(\alpha(X),\cL(B,\beta(X)))_1$ is equipped with the metric \[d(\phi_X,\psi_X)\coloneqq \sum\limits_{m,n=1}^{\infty} 2^{-m-n}\| \phi_{X}(\xi_{m})\lhd b_n - \psi_{X}(\xi_{m})\lhd b_n \|,\] where we chose dense subsets $\{\xi_{m}\}_{m\in\bN}\subset \alpha(X)_1$ and $\{ b_n \}_n \subset B_1$ in the unit balls. If moreover $\Irr(\cC)$ is countable, as $\Hom((\alpha,\fu),(\beta,\fv)))$ is contained $\prod_{X\in\Irr(\cC)}\cB(\alpha(X),\cL(B,\beta(X)))_1$ by \cref{prop:phiXmaps}, it follows that the topology on $\Hom^\cC((\alpha,\fu),(\beta,\fv))$ is second countable and metrisable.

We say $\phi,\psi\in\Hom^\cC((\alpha,\fu),(\beta,\fv))$ are approximately unitarily equivalent and write $\phi\au\psi$ if $\phi$ is contained in the closure of $\{ \Ad(u) \circ \psi \,|\, u\in \cU(\cM(B)) \}$. 
\end{rmk}

\begin{comment}
    As for second countability, for any $m,n\in\bN$, we have a countable dense subset $\{g_{m,n,l}\}_l\subset \cB(\mathrm{span}\{ \xi_1,\cdots,\xi_m \}\otimes \mathrm{span}\{b_1,\cdots,b_n\}, \beta(X))$ with respect to the norm, and if we apply Hahn--Banach theorem twice to $g_{m,n,l}$, we obtain $\widetilde{g}_{m,n,l}\in\cB(\alpha(X),\cL(B,\beta(X)))$, which forms a countable dense subset. 
\end{comment}

\begin{lemma}\label{lem:densecontainment}
    Let $\phi,\psi\colon(A,\alpha,\fu)\rightarrow (B,\beta,\fv)$ be two $\cC$-cocycle representations and $\fC$ be a collection of $\cC$-cocycle representations such that $\phi$ is contained in the closure of $\fC$. Then $\phi\preccurlyeq_{\cC} \fC$. In particular, if $\phi\au \psi$ then $\phi\preccurlyeq_{\cC} \psi$.
\end{lemma}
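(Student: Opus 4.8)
The plan is, given the data $\varepsilon>0$, $b\in B_1$, a finite set $K\subset\Irr(\cC)$ and compact sets $\cF_X\subset\alpha(X)$ for $X\in K$, to verify the conditions of \cref{defn:weakcontainment} with the simplest conceivable witness: a single $\psi^{(1)}=\psi\in\fC$ chosen sufficiently close to $\phi$, together with $n=N=1$ and $c_{1,1}=b$. With this choice, \eqref{eqn:unitweak} reads $\|b^*b-b^*b\|=0$ and so holds automatically, and the task is reduced to finding $\psi\in\fC$ with
\[
\max_{\xi\in\cF_X}\bigl\| b^*\rhd\phi_X(\xi)\lhd b - b^*\rhd\psi_X(\xi)\lhd b \bigr\|\leq\varepsilon \qquad\text{for all } X\in K.
\]

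The one genuinely substantive step — and the place where some care is needed — is to observe that, because $b$ lies in $B$ rather than merely in $\cM(B)$, the operator norm appearing above is governed by a single evaluation at $b$. Concretely, for any right $B$-linear $f\in\cL(B,\beta(X))$ and $c\in B$ one has $(b^*\rhd f\lhd b)(c)=b^*\rhd f(bc)=\bigl(b^*\rhd f(b)\bigr)\lhd c$, the last equality using right $B$-linearity of $f$ and the commutation of the left and right $B$-actions on $\beta(X)$; since $\|\zeta\lhd c\|\leq\|\zeta\|$ for $\|c\|\leq1$, this gives $\|b^*\rhd f\lhd b\|\leq\|b^*\rhd f(b)\|_{\beta(X)}\leq\|f(b)\|$. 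Applying this to $f=\phi_X(\xi)-\psi_X(\xi)$ and using $\|b\|\leq1$, it suffices to produce $\psi\in\fC$ with $\sup_{\xi\in\cF_X}\|\phi_X(\xi)(b)-\psi_X(\xi)(b)\|\leq\varepsilon$ for each $X\in K$ — and closeness of $\psi$ to $\phi$ at individual pairs $(\xi,b)$ is exactly what the topology of \cref{rmk:topologyHomcC} records, so membership of $\phi$ in the closure of $\fC$ puts such a $\psi$ within reach. I would emphasise that this reduction is the crux: without it one is left to control $\sup_{\|c\|\leq1}\|(\phi_X(\xi)-\psi_X(\xi))(bc)\|$ over the non-compact set $\{bc:\|c\|\leq1\}$, which the pointwise topology cannot do.

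What remains is the routine passage from pointwise to uniform closeness over the compact sets $\cF_X$. For this I would first note that every $\cC$-cocycle representation $\rho$ is pointwise contractive, $\|\rho_X(\xi)\|\leq\|\xi\|$: this follows from condition \ref{item:isometric} of \cref{prop:phiXmaps}, since $\|\rho_X(\xi)\|^2=\|\langle\rho_X(\xi),\rho_X(\xi)\rangle\|=\|\rho_{1_\cC}(\langle\xi,\xi\rangle)\|\leq\|\xi\|^2$ by contractivity of the $^*$-homomorphism $\rho_{1_\cC}$. Then, choosing finite $(\varepsilon/4)$-nets $\{\xi_i^X\}_i$ of each $\cF_X$ ($X\in K$), I would use $\phi\in\overline{\fC}$ to find one $\psi\in\fC$ with $\|\phi_X(\xi_i^X)(b)-\psi_X(\xi_i^X)(b)\|<\varepsilon/4$ for all the finitely many $(X,i)$, and conclude by the triangle inequality together with the uniform bounds $\|\phi_X\|,\|\psi_X\|\leq1$ and $\|b\|\leq1$. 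This establishes $\phi\preccurlyeq_\cC\fC$. For the final assertion, I would apply the result to $\fC\coloneqq\{\Ad(u)\circ\psi\mid u\in\cU(\cM(B))\}$, whose members are $\cC$-cocycle representations by \cref{example: cocyclerep}~(1); the witness produced is some $\Ad(u)\circ\psi$ with $c_{1,1}=b$, and since $b^*\rhd(\Ad(u)\circ\psi)_X(\xi)\lhd b=(u^*b)^*\rhd\psi_X(\xi)\lhd(u^*b)$ with $u^*b\in B$ and $(u^*b)^*(u^*b)=b^*b$, this is precisely a witness for $\phi\preccurlyeq_\cC\psi$.
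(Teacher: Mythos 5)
Your proposal is correct and follows essentially the same route as the paper: take $n=N=1$, $c_{1,1}=b$, and a single $\psi\in\fC$ close to $\phi$, so that \eqref{eqn:unitweak} is automatic and only \eqref{eqn:weak} needs checking. The paper states the uniform estimate $\max_{\xi\in\cF_X}\|\phi_X(\xi)\lhd b-\psi_X(\xi)\lhd b\|\leq\varepsilon$ directly from $\phi\in\overline{\fC}$ and then contracts by $b^*$; you merely make explicit the two points it leaves implicit, namely the reduction of this operator norm to the evaluation $\|(\phi_X(\xi)-\psi_X(\xi))(b)\|$ controlled by the topology of \cref{rmk:topologyHomcC}, and the passage from pointwise to uniform closeness over the compact sets $\cF_X$ via $\varepsilon$-nets and the contractivity $\|\rho_X(\xi)\|\leq\|\xi\|$.
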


\begin{proof}
    Let $\varepsilon>0$, $K\subset \Irr(\cC)$ finite, $b\in B_1$, and $\cF_X\subset \alpha(X)$ for $X\in K$ be compact. Then as $\phi$ is in the closure of $\fC$ there exists $\psi\in \fC$ such that for all $X\in K$
    \[\max_{\xi\in \cF_X}\|\phi_X(\xi)\lhd b-\psi_X(\xi)\lhd b\|\leq \varepsilon.\] 
    As $b^*$ is a contraction, we get that
    \[\max_{\xi\in \cF_X}\|b^*\rhd \phi_X(\xi)\lhd b-b^*\rhd\psi_X(\xi)\lhd b\|\leq \max_{\xi\in \cF_X}\|\phi_X(\xi)\lhd b-\psi_X(\xi)\lhd b\|\leq \varepsilon.\] 
    Therefore, the conditions of Definition \ref{defn:weakcontainment} are satisfied with $l=1$, $N=1$, $\psi^{(1)}=\psi$ and $c_{1,1}=b$. 
    %\nn{Shall we omit this last part? I think it's clear -RN}If there exists a net $u_\lambda\in \cU(\cM(B))$ so that $\Ad(u_\lambda)\psi\rightarrow \phi$ then with $\varepsilon, K, b$ and $\cF_X$ as above, the same argument for $\fC=\{\Ad(u_\lambda)\psi\colon\lambda\in \Lambda\}$ yields that there exists a unitary $u\in \cU(\cM(B))$ such that for all $X\in K$
    %\[\max_{\xi\in \cF_X}\|b^*\rhd \phi_X(\xi)(b)-(u^*b)^*\rhd\psi_X(\xi)(u^*b)\|<\varepsilon.\] 
    %Thus, taking $l=1$, $N=1$, $\psi^{(1)}=\psi$ and $c_{1,1}=u^*b$ in Definition \ref{defn:weakcontainment} one has that $\phi\preccurlyeq_{\cC} \psi$.
\end{proof}

\begin{lemma}\label{lem:containmentinsum}
    Let $\phi,\psi^{(n)}\colon(A,\alpha,\fu)\rightarrow (B,\beta,\fv)$ be $\cC$-cocycle representations for $n\in \bN$. If we further assume that $B$ is stable, then 
    \[\phi\preccurlyeq_{\cC} \{\psi^{(n)}\}_{n\in\bN}\iff \phi\preccurlyeq_{\cC} \bigoplus_{n=1}^\infty\psi^{(n)}.\footnote{Here $\bigoplus_{n=1}^\infty\psi^{(n)}$ is interpreted as the representation with the associated collection of linear maps given by $\sum_{n=1}^\infty r_n\rhd \psi_X^{(n)}\lhd r_n^*$ for a fixed sequence of isometries $r_n\in\cM(B)$ such that $\sum_{n=1}^\infty r_nr_n^*=1$.}\]
    In particular, $\phi^{\infty}\preccurlyeq_{\cC} \phi$ and $\phi\preccurlyeq_{\cC} \phi^\infty$.
\end{lemma}
\begin{proof}
Let $r_n\in \cM(B)$ be a sequence of isometries such that $\sum_{n=1}^\infty r_nr_n^*=1$ in the strict topology and let $\bigoplus_{n=1}^\infty\psi^{(n)}_X=\Psi_X$ for any $X\in\cC$. We start by showing the only if direction. By hypothesis, for all $K\subset \Irr(\cC)$ finite, $\cF_X\subset \alpha(X)$ for $X\in K$ compact, $\varepsilon>0$, and $b\in B$ there exist $\{c_{j,k} \,|\, 1\leq j \leq n, 1\leq k\leq N\}\subset B$  such that for all $\xi \in \cF_X$ and $X\in K$
     \[\|b^*\rhd \phi_X(\xi)\lhd b-\sum_{j=1}^n\sum_{k=1}^N c_{j,k}^*\rhd \psi_X^{(j)}(\xi)\lhd c_{j,k}\|\leq \varepsilon\]  
     and
     \[\|b^*b-\sum_{j=1}^n\sum_{k=1}^Nc_{j,k}^*c_{j,k}\|\leq\varepsilon.\] 
     Let $d_{j,k}=r_jc_{j,k}$ for $1\leq j \leq n$ and $1\leq k \leq N$. Then, for any $X\in K$ and $\xi\in \cF_X$,
     \begin{align*}
         b^*\rhd \phi_X(\xi)\lhd b&=_\varepsilon \sum_{j=1}^n\sum_{k=1}^Nc_{j,k}^*\rhd \psi_X^{(j)}(\xi)
     \lhd c_{j,k}\\
     &=\sum_{k=1}^N\sum_{j=1}^nd_{j,k}^*\rhd \Psi_X(\xi)\lhd d_{j,k}.
     \end{align*}
     Moreover, $b^*b=_\varepsilon \sum_{j=1}^n\sum_{k=1}^Nd_{j,k}^*d_{j,k}$ as required.
     We now turn to the if direction. For any $K\subset \Irr(\cC)$ finite, $\cF_X\subset \alpha(X)_1$ compact for $X\in K$, $\varepsilon>0$, and $b\in B_1$ choose contractions $\{c_{k} \,|\, 1\leq k\leq N\}$ such that for any $X\in K$ and $\xi\in\cF_X$, 
     \[\|b^*\rhd \phi_X(\xi)\lhd b-\sum_{k=1}^Nc_k^*\rhd \Psi_X(\xi)\lhd c_k\|\leq \varepsilon\] 
     and 
     \[\|b^*b-\sum_{k=1}^Nc_k^*c_k\|\leq \varepsilon.\] 
     Choose $n\in \bN$ such that $\sum_{j=1}^nr_jr_j^*c_k=_{\varepsilon/2N} c_k$ for all $1\leq k \leq N$. Then letting $f=\sum_{j=1}^nr_jr_j^*$ and $d_{j,k}=r_j^*c_k$, it follows that for $\xi\in \cF_X$ for $X\in K$
     \begin{align*}
         b^*\rhd\phi_X(\xi)\lhd b&=_\varepsilon \sum_{k=1}^Nc_k^*\rhd \Psi_X(\xi)\lhd c_k\\
         &=_\varepsilon \sum_{k=1}^N(c_k^*f^*)\rhd\Psi_X(\xi)\lhd (fc_k)\\
         &=\sum_{j=1}^n\sum_{k=1}^N d_{j,k}^*\rhd \psi^{(j)}_X(\xi)\lhd d_{j,k}
     \end{align*}
     and 
     \[b^*b=_\varepsilon \sum_{k=1}^Nc_k^*c_k=_\varepsilon \sum_{j=1}^n\sum_{k=1}^Nd_{j,k}^*d_{j,k}\] 
     as required.
\end{proof}

We now show a $\cC$-equivariant version of \cite[Lemma 3.9]{GASZ22} which is a useful sufficient criterion to obtain absorption of $\cC$-cocycle representations.

\begin{lemma}\label{lem:isometryabsorption}
    Let $\phi,\psi\colon(A,\alpha,\fu)\rightarrow (B,\beta,\fv)$ be $\cC$-cocycle representations with $B$ stable. Suppose that there exists a sequence of isometries $s_n\in\cM(B)$ such that for all $\xi\in \alpha(X)$ and $X\in \cC$
    \begin{enumerate}[label=\textit{(\roman*)}]
    \item\label{item:lem:isometryabsorption1} 
    $\|s_n\rhd \psi_X(\xi)-\phi_X(\xi)\lhd s_n\|\rightarrow 0$,
    \item\label{item:lem:isometryabsorption2} 
    $s_n\rhd \psi_X(\xi)-\phi_X(\xi)\lhd s_n\in \cK(B,\beta(X))$,
    \item\label{item:lem:isometryabsorption3} 
    $s_{n+1}^*s_n=0,\ \forall n\geq 1$,
    \end{enumerate}
    and moreover that there exists a unital inclusion $\iota\colon\cO_2\rightarrow \cM(B)$ with $\iota(a)\rhd\psi_X(\xi)=\psi_X(\xi)\lhd \iota(a)$ for all $\xi\in \alpha(X)$, $X\in \cC$, and $a\in \cO_2$. Then $\phi\sim_{\cC} \psi\oplus \phi$.
\end{lemma}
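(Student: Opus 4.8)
The plan is to adapt the construction of \cite[Lemma~3.9]{GASZ22} to the $\cC$-equivariant setting, realising the asymptotic unitary equivalence $\phi\sim_\cC\psi\oplus\phi$ by an infinite telescoping of rotations produced from the isometries $s_n$, with the copy of $\cO_2$ commuting with $\psi$ used to reproduce copies of $\psi$ along the way. By \cref{prop:phiXmaps} (as already observed after \cref{rmk: asympcomments}), to establish $\phi\sim_\cC\psi\oplus\phi$ it suffices to verify the two conditions of \cref{defn:asympunitaryeq} for $X$ ranging only over $\Irr(\cC)$, and by a routine density argument for $\xi$ in a fixed countable dense subset of each $\alpha(X)_1$; accordingly, the hypotheses (i)--(iii) are only needed for such $\xi$, and I may fix for each $n$ an error bound
\[
\varepsilon_n\coloneqq\sup\bigl\|s_n\rhd\psi_X(\xi)-\phi_X(\xi)\lhd s_n\bigr\|\xrightarrow{\ n\to\infty\ }0,
\]
the supremum over the finitely many data relevant at stage $n$. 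Since $\sim_\cC$ is an equivalence relation, it is equivalent to prove $\psi\oplus\phi\sim_\cC\phi$. Two structural inputs will be used: (a) from the $s_n$, that the corner $s_ns_n^*$ of $\phi$ looks, modulo $\cK(B,\beta(X))$ and up to norm $\varepsilon_n$, like a copy of $\psi$ --- indeed $s_n^*\rhd\phi_X(\xi)\lhd s_n\equiv_{\varepsilon_n}\psi_X(\xi)$ mod $\cK(B,\beta(X))$ by (i)--(ii) --- together with the orthogonality $s_ns_n^*\perp s_{n+1}s_{n+1}^*$ from (iii); and (b) from $\iota\colon\cO_2\to\cM(B)$, writing $t_1,t_2$ for the canonical generators, the exact identity $\psi\oplus_{\iota(t_1),\iota(t_2)}\psi=\psi$ of $\cC$-cocycle representations --- immediate from $\iota(a)\rhd\psi_X(\xi)=\psi_X(\xi)\lhd\iota(a)$ together with $\iota(t_1t_1^*+t_2t_2^*)=1$ --- and, iterating, copies of $\psi$ sitting on the pairwise orthogonal corners $\iota(t_1^{n-1}t_2)$, $n\ge1$, all commuting with $\psi$.

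\emph{The path.} Following \cite[Lemma~3.9]{GASZ22}, I would build a norm-continuous path $u\colon[1,\infty)\to\cU(\cM(B))$ by concatenating, over the intervals $[n,n+1]$, finite products of two kinds of unitaries: Kasparov-type rotations $R_n(\theta)\in\cM(B)$, $\theta$ running from $0$ to $\tfrac{\pi}{2}$, equal to $1$ off the projection $s_ns_n^*+s_{n+1}s_{n+1}^*$ and rotating $s_n$ into $s_{n+1}$ on it --- these are genuine unitaries precisely because $s_{n+1}^*s_n=0$, and satisfy $R_n(0)=1$ --- and shift unitaries on the $\iota$-corners carrying the reservoir of copies of $\psi$. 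The stages are arranged so that the cumulative conjugation has, by time $n+1$, folded one copy of $\psi$ from $\psi\oplus\phi$ into the corner $s_ns_n^*$ of $\phi$ via input (a) (thereby altering the representation only by a compact operator of norm $\le\varepsilon_n$, using (i)--(ii) and the fact that $\cK(B,\beta(X))$ is closed under the left and right $\cM(B)$-actions on $\cL(B,\beta(X))$), while input (b) re-splits and translates the reservoir so that a fresh copy of $\psi$ is always available and none survives in the limit. As $\varepsilon_n\to0$, the conjugates $u_t\rhd(\psi\oplus\phi)_X(\xi)\lhd u_t^*$ converge in norm to $\phi_X(\xi)$ as $t\to\infty$, and the compactness condition of \cref{defn:asympunitaryeq} holds for every $t$ by (ii). This gives $\psi\oplus\phi\sim_\cC\phi$, hence $\phi\sim_\cC\psi\oplus\phi$.

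\emph{Main obstacle.} The hard part is the analytic organisation of the previous paragraph: arranging the cumulative products to match at the junctions $t=n$, so that $u$ is norm-continuous on all of $[1,\infty)$ (this is precisely what $R_n(0)=1$ and $s_{n+1}^*s_n=0$ deliver); verifying that the conjugated representations converge to $\phi$ itself, not to $\phi$ with a residual tail of copies of $\psi$ (this is where the identity $\psi\oplus\psi=\psi$ supplied by the $\cO_2$ is indispensable); and controlling the convergence uniformly in $\xi$ over the relevant compact sets and simultaneously over all $X\in\Irr(\cC)$ (handled by the reductions above and, where approximate units intervene, by $\sigma$-unitality of $B$ through estimates as in \cref{lem:approxunit}). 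The $\cC$-equivariance itself is mild: every operation above has the form $x\rhd(-)\lhd y$ with $x,y\in\cM(B)$, which by the bimodularity in \cref{prop:phiXmaps} acts compatibly on the entire coherent family $\{(-)_X\}_{X\in\cC}$; so the argument proceeds object-by-object with no new equivariant phenomenon, and --- in contrast to other parts of the paper --- without any use of duality.
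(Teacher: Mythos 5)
There is a genuine gap, and it sits exactly at the point you dismiss in your closing sentence. Hypotheses \ref{item:lem:isometryabsorption1}--\ref{item:lem:isometryabsorption2} only control the differences $s_n\rhd \psi_X(\xi)-\phi_X(\xi)\lhd s_n$, but any verification that $u_t\rhd \phi_X(\xi)\lhd u_t^*$ converges to $(\psi_X\oplus\phi_X)(\xi)$ must commute $s_t$ (or $s_t^*$) past $\phi_X(\xi)$ on \emph{both} sides of the conjugation, i.e.\ it also needs the companion estimates $\|s_t^*\rhd \phi_X(\xi)-\psi_X(\xi)\lhd s_t^*\|\to 0$ and $s_t^*\rhd \phi_X(\xi)-\psi_X(\xi)\lhd s_t^*\in\cK(B,\beta(X))$. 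In the non-equivariant or group case these follow from (i)--(ii) by taking adjoints, since $\phi(a)^*=\phi(a^*)$; here $\phi_X(\xi)\in\cL(B,\beta(X))$ and its adjoint lives in $\cL(\beta(X),B)$, so no such shortcut exists. The paper obtains the starred estimates precisely by duality: combining \cref{lemma:barnotation,lem:usingisometryofstar} with \cref{prop:dualrep} one gets
\[ s_t^*\rhd \phi_X(\xi)-\psi_X(\xi)\lhd s_t^* \;=\; \beta(\mu_X^{-1})\circ\overline{\bigl(\phi_{\overline{X}}(\overline{\xi})\lhd s_t-s_t\rhd \psi_{\overline{X}}(\overline{\xi})\bigr)}, \]
whose norm is at most $d_X^{1/2}\,\|\phi_{\overline{X}}(\overline{\xi})\lhd s_t-s_t\rhd \psi_{\overline{X}}(\overline{\xi})\|$ and which lies in $\cK(B,\beta(X))$ by (ii) applied at $\overline{X}$ and $\overline{\xi}$. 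So your claim that the argument proceeds ``object-by-object \dots without any use of duality'' is false, and your reduction of the hypotheses to countable dense subsets of $\alpha(X)_1$ for $X\in\Irr(\cC)$ hides the fact that they must be invoked at the conjugate data as well.

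Separately, the construction is never actually produced. No telescoping of ``Kasparov-type rotations'' and ``shift unitaries'' absorbing one copy of $\psi$ per unit interval from a reservoir is needed, and the identity $\psi\oplus\psi=\psi$ plays no role. The paper interpolates $s_{n+t}=(1-t)^{1/2}s_n+t^{1/2}s_{n+1}$ (an isometry thanks to \ref{item:lem:isometryabsorption3}) and uses the single explicit path $u_t=(r_1r_1^*+r_2s_tr_2^*)s_t^*+r_2(1-s_ts_t^*)$, where $r_1,r_2$ are the images of the generators of $\cO_2$ under $\iota$; the commutation of $\iota(\cO_2)$ with $\psi$ is used only to kill the cross terms and to identify the $r_1$-corner with $\psi$ in the resulting computation. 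Since your scheme is stated only at the level of intentions (``arranged so that \dots'', ``the hard part is the analytic organisation''), and since it explicitly denies the one genuinely $\cC$-equivariant input that makes the estimate closable, it does not constitute a proof as written.
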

\begin{proof}
    We follow the strategy in \cite[Lemma 2.3, Lemma 2.4]{DADEIL01} (see also \cite[Lemma 3.8, Lemma 3.9]{GASZ22}). Firstly $s_n$ can be extended to a norm-continuous path $(s_t)_{t\geq 1}$ of isometries still satisfying \ref{item:lem:isometryabsorption1} and \ref{item:lem:isometryabsorption2} by setting $s_{n+t}=(1-t)^{1/2}s_n+t^{1/2}s_{n+1}$ for all $t\in [0,1]$. Moreover, note that for any $X\in \cC$ and $\xi\in \alpha(X)$, combining  \cref{lemma:barnotation,lem:usingisometryofstar} gives that
    \begin{align*}
    \|s_t^*\rhd \phi_X(\xi)-\psi_X(\xi)\lhd s_t^*\|
    &\leq d_X^{1/2}\|\phi_{\overline{X}}(\overline{\xi})\lhd s_t-s_t\rhd \psi_{\overline{X}}(\overline{\xi})\|\longrightarrow 0
    \end{align*}
    as $t\rightarrow \infty$. Using Lemma \ref{lem:usingisometryofstar} again, it follows that \[s_t^*\rhd \phi_X(\xi)-\psi_X(\xi)\lhd s_t^*\in \beta(\mu_X^{-1})\circ\overline{\cK(B,\beta(\overline{X}))}\subset \cK(B,\beta(X)).\] Let $r_1,r_2\in \cM(B)$ be the images of the generators of $\cO_2$ through $\iota$. Then consider the norm-continuous path $u\colon[1,\infty)\rightarrow \cU(\cM(B))$ given by
    \[u_t=(r_1r_1^*+r_2s_tr_2^*)s_t^*+r_2(1-s_ts_t^*).\] 
    Now, using the properties of $s_t$, for any $\varepsilon>0$, $X\in \cC$, and $\xi\in \alpha(X)$ one can choose $n_0$ large enough so that for any $t\geq n_0$
    \[
    \lVert s_t\rhd\phi_X(\xi)-\phi_X(\xi)\lhd s_t\rVert+\lVert s_t^*\rhd\phi_X(\xi)-\phi_X(\xi)\lhd s_t^*\rVert<\varepsilon.
    \]
    Thus, as $s_t$ are isometries for any $t\geq n_0$ one has
    {\small
    \begin{align*}
        & u_t\rhd \phi_X(\xi)\lhd u_t^*\\
        &=(r_1r_1^*+r_2s_tr_2^*)s_t^*\rhd \phi_X(\xi)\lhd s_t(r_1r_1^*+r_2s_t^*r_2^*)\\ 
        &\quad +(r_1r_1^*+r_2s_tr_2^*)s_t^*
         \rhd \phi_X(\xi)\lhd (1-s_ts_t^*)r_2^*\\
         &\quad +r_2(1-s_ts_t^*)\rhd \phi_X(\xi)\lhd s_t(r_1r_1^*+r_2s_t^*r_2^*)\\
        &\quad +r_2(1-s_ts_t^*)\rhd \phi_X(\xi)\lhd (1-s_ts_t^*)r_2^*\\
        &=_{4\varepsilon}(r_1r_1^*+r_2s_tr_2^*)\rhd \psi_X(\xi)\lhd(r_1r_1^*+r_2s_t^*r_2^*)+r_2\rhd \phi_X(\xi)\lhd (1-s_ts_t^*)r_2^*\\
        &= r_1\rhd \psi_X(\xi)\lhd r_1^*+r_2s_t\rhd \phi_X(\xi)\lhd s_t^*r_2^*+r_2\rhd \phi_X(\xi)\lhd (1-s_ts_t^*)r_2^*\\
        &=_{\varepsilon}(\psi_X\oplus\phi_X)(\xi).
    \end{align*}
    }
    As $\varepsilon$ is arbitrary and each of the differences in the computation above are contained in $\cK(B,\beta(X))$, the statement follows.
    \end{proof}

The following lemma is a $\cC$-equivariant adaptation of \cite[Lemma 3.11]{GASZ22}.

\begin{lemma}\label{lem:biglemsec3}
    Let $A$ be a separable $\cC$-C$^*$-algebra, $B$ be a $\sigma$-unital and stable $\cC$-C$^*$-algebra, and $\phi,\psi\colon(A,\alpha,\fu)\rightarrow (B,\beta,\fv)$ be $\cC$-cocycle representations. Then the following are equivalent:
    \begin{enumerate}[label=\textit{(\roman*)}]
        \item $\psi$ is $\cC$-contained in $\phi$ at infinity;\label{item:1lem3.11}
        \item $\psi^{\infty}$ is $\cC$-contained in $\phi$ at infinity;\label{item:2lem3.11}
        \item There exists an isometry $S\in \cM(B)$ such that
        \[\psi_X^{\infty}(\xi)-S^*\rhd \phi_X(\xi)\lhd S\in \cK(B,\beta(X))\] 
        for all $X\in \cC$ and $\xi\in\alpha(X)$;\label{item:3lem3.11}
        \item $\phi\sim_{\cC} \psi^{\infty}\oplus \phi$.\label{item:4lem3.11}
    \end{enumerate}
\end{lemma}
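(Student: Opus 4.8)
The plan is to prove the cycle of implications $\ref{item:3lem3.11}\Rightarrow\ref{item:4lem3.11}\Rightarrow\ref{item:1lem3.11}\Rightarrow\ref{item:2lem3.11}\Rightarrow\ref{item:3lem3.11}$, since each link turns out to be either a direct application of an earlier lemma or a compactness/approximate-unit argument. For $\ref{item:3lem3.11}\Rightarrow\ref{item:4lem3.11}$, I would take the isometry $S$ provided by \ref{item:3lem3.11} and manufacture the sequence of isometries needed to invoke \cref{lem:isometryabsorption}. Concretely, since $B$ is stable, fix a unital copy $\iota\colon\cO_2\to\cM(B)$ that commutes with the image of $\psi^{\infty}$ — this is automatic from the construction of the infinite repeat (one builds $\psi^\infty$ using isometries $r_n$, and a unital $\cO_2\subset\cM(B)$ commuting with all $r_n\rhd(-)\lhd r_n^*$ exists because the infinite repeat can be taken inside a corner complemented by another stable copy). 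Set $s_n\coloneqq S\cdot(\text{shift}^{n})$ built from the $r_n$'s implementing $\psi^\infty=\bigoplus\psi$; then $s_n\rhd\psi^\infty_X(\xi)-\psi^\infty_X(\xi)\lhd s_n$ reduces via \ref{item:3lem3.11} to something in $\cK(B,\beta(X))$ and norm-tending to $0$ because the shift pushes mass off to infinity, while $s_{n+1}^*s_n=0$ by orthogonality of the shifted ranges. Feeding $(s_n)$ and $\iota$ into \cref{lem:isometryabsorption} (with the roles $\phi\leftrightarrow\phi$, $\psi\leftrightarrow\psi^\infty$) yields $\phi\sim_\cC\psi^\infty\oplus\phi$.

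The implication $\ref{item:4lem3.11}\Rightarrow\ref{item:1lem3.11}$ is the heart of the argument but follows the template of \cite[Lemma 3.11]{GASZ22}. From $\phi\sim_\cC\psi^\infty\oplus\phi$ we get a norm-continuous unitary path $u_t$ with $\psi^\infty_X(\xi)\oplus\phi_X(\xi)=\lim_t u_t\rhd\phi_X(\xi)\lhd u_t^*$ and the differences lying in $\cK(B,\beta(X))$. Now given $\varepsilon>0$, $b\in B_1$, finite $K\subset\Irr(\cC)$ and compact $\cF_X\subset\alpha(X)_1$, pick $t$ large; then $x\coloneqq u_t^*\rhd(r_1\rhd b)$ where $r_1$ is the first coordinate isometry of the $\psi^\infty$-decomposition is a candidate witness for $\psi\preccurlyeq_\cC\phi$ at infinity: conjugating $\psi_X(\xi)$ into the first summand of $\psi^\infty$ and then transporting by $u_t$ produces $x^*\rhd\phi_X(\xi)\lhd x$ close to $b^*\rhd\psi_X(\xi)\lhd b$, while $\|x^*x-b^*b\|$ is small because $u_t$ is unitary and $r_1$ is isometric, and the orthogonality estimate $\|x^*b\|\leq\varepsilon$ comes from the fact that $b$ itself (placed in the first summand, really $r_1 b$) is $\cK$-close — after conjugating by $u_t$ — to something supported on a summand orthogonal to where $x$ lives, exploiting that $\psi^\infty$ has infinitely many summands so one can always shift into a fresh one. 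Here I would use \cref{lem:approxunit} to control the compact pieces and \cref{rmk:simplificationsweak} to reduce to contractions. The bookkeeping of which summand goes where is the main obstacle and must be done carefully, exactly as in \cite{GASZ22}.

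The implication $\ref{item:1lem3.11}\Rightarrow\ref{item:2lem3.11}$ is essentially \cref{lem:containmentinsum}: $\psi\preccurlyeq_\cC\phi$ at infinity trivially implies $\psi\preccurlyeq_\cC\{\phi\}$, and one upgrades to $\psi^\infty$ at infinity by a diagonal argument — cover a countable exhausting family of $(\varepsilon,b,K,\cF)$ data, produce for the $n$-th datum a witness $x_n$, and observe that since $B$ is stable one can relocate each $x_n$ into the $n$-th summand of $\psi^\infty$, obtaining a single witness for $\psi^\infty$ at infinity with the orthogonality condition \eqref{eqn:orthogonalityweak} preserved because distinct summands are orthogonal. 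Finally $\ref{item:2lem3.11}\Rightarrow\ref{item:3lem3.11}$ is the standard "build an isometry by successive approximation" argument (the $\cC$-equivariant analogue of the non-equivariant construction in \cite{DADEIL02}): using separability of $A$ and $\sigma$-unitality of $B$, apply \cref{lem:approxunit} to get an approximate unit adapted to the separable subspaces $\psi^\infty_X(\alpha(X))$ and $\phi_X(\alpha(X))$, then inductively choose contractions $x_n\in B$ from \ref{item:2lem3.11} (with the orthogonality condition ensuring the $x_n$ assemble, after the usual telescoping, into a genuine isometry $S\in\cM(B)$ — or rather into a path that one then makes into an isometry using \cref{lem:strictpathconnisomet}) satisfying $\psi^\infty_X(\xi)\equiv S^*\rhd\phi_X(\xi)\lhd S$ mod $\cK(B,\beta(X))$ for all $X$ and $\xi$ in the chosen dense sets, hence for all $\xi$ by continuity and \cref{prop:phiXmaps}. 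I expect the bookkeeping in $\ref{item:4lem3.11}\Rightarrow\ref{item:1lem3.11}$ to be the genuinely delicate step; the others are adaptations of well-understood non-equivariant or group-equivariant arguments, with \cref{lem:usingisometryofstar} supplying the duality input that replaces group invertibility whenever one needs to move an isometry from the left of a $\phi_X(\xi)$ to its right.
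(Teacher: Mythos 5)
Your overall architecture matches the paper's: the paper likewise runs a cycle through \cref{lem:isometryabsorption} for \ref{item:3lem3.11}$\Rightarrow$\ref{item:4lem3.11}, a successive-approximation construction of the isometry (via \cref{lem:approxunit}) for \ref{item:2lem3.11}$\Rightarrow$\ref{item:3lem3.11}, and duality via \cref{lem:usingisometryofstar} to move isometries across $\phi_X(\xi)$. (The paper proves \ref{item:1lem3.11}$\Rightarrow$\ref{item:2lem3.11}$\Rightarrow$\ref{item:3lem3.11}$\Rightarrow$\ref{item:1lem3.11} together with \ref{item:4lem3.11}$\Leftrightarrow$\ref{item:3lem3.11}; your direct \ref{item:4lem3.11}$\Rightarrow$\ref{item:1lem3.11} is just the composition of the paper's one-line \ref{item:4lem3.11}$\Rightarrow$\ref{item:3lem3.11}, namely $S=u^*s_1$, with \ref{item:3lem3.11}$\Rightarrow$\ref{item:1lem3.11}, and is fine.) However, two of your links have genuine gaps. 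Your \ref{item:1lem3.11}$\Rightarrow$\ref{item:2lem3.11} does not work as described: containment at infinity is a ``for every $(\varepsilon,b,K,\cF_X)$ there exists a witness'' statement, so there is no single global witness to assemble by a diagonal argument over a countable exhausting family (that idea belongs to \ref{item:2lem3.11}$\Rightarrow$\ref{item:3lem3.11}); moreover \cref{lem:containmentinsum} concerns $\cC$-weak containment with the infinite repeat on the \emph{containing} side, the opposite of what is needed here. The real content is: approximate $b^*\rhd\psi^\infty_X(\xi)\lhd b$ by a finite sum of compressions by $r_k^*b$, apply \ref{item:1lem3.11} iteratively --- this is exactly where \eqref{eqn:orthogonalityweak} is used --- to obtain witnesses $x_1,\dots,x_N$ that are mutually almost orthogonal, including $\|x_l^*\rhd\phi_X(\xi)\lhd x_k\|$ small for $l>k$; then invoke \cref{lem:usingisometryofstar} (at the cost of $d_X^{1/2}$, which forces $K=\overline{K}$ and $\overline{\cF_X}\subset\cF_{\overline{X}}$) to get the same for $l<k$, and set $x=\sum_j x_jr_j^*$.

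In \ref{item:3lem3.11}$\Rightarrow$\ref{item:4lem3.11} you assert that $T_n\coloneqq s_n\rhd\psi^\infty_X(\xi)-\phi_X(\xi)\lhd s_n$ lies in $\cK(B,\beta(X))$ and tends to $0$ ``via \ref{item:3lem3.11}''. But \ref{item:3lem3.11} only controls the one-sided compression $s_n^*\rhd\phi_X(\xi)\lhd s_n-\psi^\infty_X(\xi)$, and since $s_ns_n^*\neq1$ this does not directly bound the two-sided intertwining defect that \cref{lem:isometryabsorption} requires: the uncontrolled term $(1-s_ns_n^*)\rhd\phi_X(\xi)\lhd s_n$ must be dealt with. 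The paper does this by computing $T_n^*T_n$, which expands (using $\phi_X(\xi)^*\circ\phi_X(\xi)=\phi_{1_\cC}(\langle\xi,\xi\rangle)$) into terms controlled by the compression, showing $T_n^*T_n$ is compact and norm-null, and then deducing $T_n\in\cK(B,\beta(X))$ and $\|T_n\|\to0$ by polar decomposition in $\cK(B\oplus\beta(X))$. It also replaces $\psi^\infty$ by the unitarily equivalent double repeat $(\psi^\infty)^\infty$ so that $s_n=Sr_n$ automatically satisfies $s_n^*\rhd\phi_X(\xi)\lhd s_n\to\psi^\infty_X(\xi)$; your shift construction can be arranged to give the same convergence, but neither that nor the $T_n^*T_n$ step is automatic and both must be carried out.
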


\begin{proof}
    Let $r_n\in \cM(B)$ be isometries such that $\sum_{n=1}^\infty r_nr_n^* = 1$ in the strict topology.
    \ref{item:1lem3.11} $\Rightarrow$ \ref{item:2lem3.11} Choose a finite set $K\subset \Irr(\cC)$, compact subsets $\cF_X\subset \alpha(X)$ for $X\in K$, $\varepsilon>0$ and $b\in B_1$. Without loss of generality we may assume that (up to isomorphism of objects) $1_{\cC}\in K=\overline{K}$ and that any $\xi\in \cF_X$ satisfies $\|\xi\|\leq 1$ and $\overline{\xi}\in \cF_{\overline{X}}$. Let $C=\max_{X\in K}d_X^{1/2}$.
    Following the proof of \cite[Lemma 3.11]{GASZ22} (see also Remark \ref{rmk:simplificationsweak}) one can choose $N\in \bN$, and $b_0,x_1,\ldots, x_N\in B_1$ with $b_0$ positive such that letting $b_{0,N}=\sum_{k=1}^N r_kb_0r_k^*$ (note that $b_{0,N}$ is a positive contraction as $b_0$ is) one has that $\|b-b_{0,N}b\|\leq \varepsilon/4$, and for all $1\leq k \leq N$, $l>k$, $\xi\in\cF_X$, $X\in K$
    
    \begin{equation}\label{eq: longlemma1}
    \|b_0\rhd \psi_X(\xi)\lhd b_0-x_k^*\rhd \phi_X(\xi)\lhd x_k\|\leq \frac{\varepsilon}{8N},\end{equation}
    
    \begin{equation}\label{eq: longlemma2}
        \|b_0^2-x_k^*x_k\|\leq \frac{\varepsilon}{4N},\ \text{and}
    \end{equation}
    \begin{equation}\label{eqn:orthogonal}\|x_l^*b\|+\|x_l^*x_k\|+\|x_l^*\rhd \phi_X(\xi)\lhd x_k\|\leq \frac{\varepsilon}{4N^2C}.\end{equation}
   Similarly, it follows from \cref{lemma:barnotation,lem:usingisometryofstar}, and \eqref{eqn:orthogonal} that for $1\leq l< k\leq N$, $X \in K$ and $\xi\in \cF_X$
   \begin{align*}
       \|x_l^*\rhd \phi_X(\xi)\lhd x_k\|
       %&\leq d_X^{1/2}\|x_k^*\rhd \phi_{\overline{X}}(\overline{\xi})\lhd x_l\|\\ 
       \leq C\|x_k^*\rhd \phi_{\overline{X}}(\overline{\xi}) \lhd x_l \| \leq \frac{\varepsilon}{4N^2}.
   \end{align*}
   Set $x=\sum_{j=1}^Nx_jr_j^*$. For all $X\in K$ and $\xi\in \cF_X$ it follows that 
   \begin{align*}
   x^*\rhd \phi_X(\xi)\lhd x&\ = \ \sum_{k,l=1}^N (r_kx_k^*)\rhd \phi_X(\xi)\lhd (x_lr_l^*)\\
   &\stackrel{\eqref{eqn:orthogonal}}{=}_{\!\!\varepsilon/4} \sum_{k=1}^N (r_kx_k^*)\rhd \phi_X(\xi)\lhd (x_kr_k^*)\\
   &\stackrel{\eqref{eq: longlemma1}}{=}_{\!\!\varepsilon/4} \sum_{k=1}^N (r_kb_0)\rhd \psi_X(\xi)\lhd (b_0r_k^*)\\
   %&= \sum_{k,l=1}^N\sum_{j=1}^\infty (t_kb_0t_k^*t_j)\rhd \psi_X(\xi)\lhd (t_j^*t_{l}b_0t_{l}^*)\\
   &\ = \ b_{0,N}\rhd \psi^\infty_X(\xi)\lhd b_{0,N}
   \end{align*}
   Therefore, as $b_{0,N}b=_{\varepsilon/4} b$, we have that
   \[\|b^*\rhd \psi_X^\infty(\xi)\lhd b-(xb)^*\rhd \phi_X(\xi)\lhd (xb)\|\leq \varepsilon\] 
   for all $X\in K$ and $\xi\in \cF_X$. By \eqref{eqn:orthogonal}, we also have that $\|(xb)^*b\|\leq \varepsilon$. Furthermore, we get that
   \begin{align*}
       b^*b&\ =_{\varepsilon/2} b^*b_{0,N}b_{0,N}b
       =\sum_{j=1}^Nb^*r_jb_0^2r_j^*b
       \stackrel{\eqref{eq: longlemma2}}{=}_{\!\!\varepsilon/4}\sum_{j=1}^Nb^*r_jx_j^*x_jr_j^*b
       \stackrel{\eqref{eqn:orthogonal}}{=}_{\!\!\varepsilon/4} b^*x^*xb.
   \end{align*}
   \ref{item:2lem3.11} $\Rightarrow$ \ref{item:3lem3.11}. Let $K_n\subset \Irr(\cC)$ be increasing finite sets such that (up to isomorphism) $1_\cC\in K_n=\overline{K_n}$ and $\bigcup_{n\in \bN} K_n = \Irr(\cC)$. Moreover, by separability of $A$, each $\alpha(X)$ is a separable Banach space. Therefore, we may choose compact sets $\cF_X\subset \alpha(X)_1$ for $X\in \Irr(\cC)$ such that $\overline{\xi}\in \cF_{\overline{X}}$ whenever $\xi\in \cF_X$ and the linear span of each $\cF_X$ is dense in $\alpha(X)$. We also let $C_n=\max_{X\in K_n} d_X^{1/2}$.
   By Lemma \ref{lem:approxunit}, there exists a countable, increasing, approximate unit $(e_n)_{n\geq 1}$ in $B$ that is quasicentral for $\psi_X^{\infty}(\xi)$ for any $X\in \Irr(\cC)$ and $\xi \in \alpha(X)$.
   Fix $0<\varepsilon<1/2$. By passing to a subsequence of $e_n$ we may assume that
   \begin{equation}\label{eqn:approxunitlem3.11}
       \max_{\xi\in \cF_X}\|[\psi_{X}^{\infty}(\xi),f_n]\|\leq 2^{-n}\varepsilon,\ \forall X\in K_n,
   \end{equation}
   where $f_1=e_1^{1/2}$ and $f_n=(e_n-e_{n-1})^{1/2}$ for all $n\geq 2$. 
    Note also that $f_n$ are positive contractions and that $\sum_{n=1}^\infty f_n^2=1$ in the strict topology. We may apply the hypothesis inductively (see also Remark \ref{rmk:simplificationsweak}) to get contractions $x_n\in B$ satisfying
    \begin{equation}\label{eqn:inductive1}
        \max_{\xi\in \cF_X}\|f_n\rhd \psi_X^{\infty}(\xi)\lhd f_n-x_n^*\rhd \phi_X(\xi)\lhd x_n\|\leq 2^{-(n+1)}\varepsilon, \forall X\in K_n,
    \end{equation}
    \begin{equation}\label{eqn:inductive2}
        \|x_n^*x_n-f_n^2\|\leq 2^{-(n+1)}\varepsilon, 
    \end{equation}
    \begin{equation}\label{eqn:inductive3}
        \max_{\xi\in \cF_X}\|x_n^*\rhd \phi_X(\xi)\lhd x_j\|\leq C_n^{-1}2^{-(2+j+n)}\varepsilon,\ \forall X\in K_n,\ 1\leq j<n.
    \end{equation}
    \begin{equation}\label{eqn:inductive3.5}
        \|x_n^*e_j\|+\|x_n^*x_j\|\leq 2^{-(2+j+n)}\varepsilon,\ \forall  1\leq j<n.
    \end{equation}
 Applying the same argument as in \ref{item:1lem3.11}$\Rightarrow$\ref{item:2lem3.11}, \eqref{eqn:inductive3} also yields that for any $X\in K_n$ and $1\leq j<n$,
 \begin{equation}\label{eqn:inductive4}
     \max_{\xi\in \cF_X}\{\|x_j^*x_n\|,\|x_j^*\rhd \phi_X(\xi)\lhd x_n\|\}\leq 2^{-(2+j+n)}\varepsilon.
 \end{equation}
 \par We let $R=\sum_{n=1}^\infty x_n$, making sense of the convergence of this sum in the strict topology. That $R$ is well-defined follows exactly as in the proof of \ref{item:2lem3.11} $\Rightarrow$ \ref{item:3lem3.11} in \cite[Lemma 3.11]{GASZ22} by additionally making use of \eqref{eqn:inductive2} (to show this note that conditions \eqref{eqn:inductive1} and \eqref{eqn:inductive3.5} are used). 
 \par We are going to check that $R$ satisfies the required properties for $S$ apart from being an isometry. First note that by \eqref{eqn:inductive3} and \eqref{eqn:inductive4}, $R^*\rhd \phi_X(\xi)\lhd R-\sum_{n=1}^\infty x_n^*\rhd \phi_X(\xi)\lhd x_n$ is contained in $\cK(B,\beta(X))$ for all $X\in \Irr(\cC)$ and $\xi\in \cF_X$. Indeed, this difference is an absolutely convergent sum of elements of the form $x_n^*\rhd \phi_X(\xi)\lhd x_k \in \cK(B,\beta(X))$ (any element of this form is contained in $\cK(B,\beta(X))$ as $x_k\in B=\cK(B)$ and the right action is precomposition) and $\cK(B,\beta(X))$ is a Banach space. We will use this same argument a few times in the next two displayed computations. Let $X \in \Irr(\cC)$ and $\xi \in \cF_X$ then we have 
 \begin{equation*}
 \begin{array}{rcl}
     R^*\rhd \phi_X(\xi)\lhd R 
     %=\sum_{k,n<j}x_k^*\rhd \phi_X(\xi)\lhd x_n &+\sum_{n\geq j} x_n^*\rhd \phi_X(\xi)\lhd x_n &+\sum_{n\neq k\geq j} x_k^*\rhd \phi_X(\xi)\lhd x_n\\
     &\equiv& \sum_{n=1}^\infty x_n^*\rhd \phi_X(\xi)\lhd x_n\\
     &\stackrel{\eqref{eqn:inductive1}}{\equiv}& \sum_{n=1}^\infty f_n\rhd \psi_X^{\infty}(\xi)\lhd f_n\\
     &\stackrel{\eqref{eqn:approxunitlem3.11}}{\equiv}& \sum_{n=1}^\infty f_n^2\rhd \psi_X^{\infty}(\xi) \\
     &=& \psi_X^{\infty}(\xi) 
 \end{array}
 \end{equation*}
 mod $\cK(B,\beta(X))$. Furthermore, we have that
 \begin{align*}
     R^*R &=\sum_{n,k=1}^\infty x_n^*x_k
     \stackrel{\eqref{eqn:inductive3},\eqref{eqn:inductive4}}{\equiv_{\varepsilon}}
     \sum_{n=1}^\infty x_n^*x_n 
     \stackrel{\eqref{eqn:inductive2}}{{\equiv}_{\varepsilon}}  \sum_{n=1}^\infty f_n^2 
     = 1
 \end{align*}
 modulo $B$.
 In particular, $|R|^2=R^*R\in 1+B$.
 From $\|R^*R-1\|\leq 2\varepsilon<1$, we see that $|R|= (R^*R)^{1/2}$ is invertible, and hence $|R|^{-1}\in 1+B$ or $1-|R|^{-1}\in B$.
 Then $S\coloneqq R|R|^{-1}\in \cM(B)$ is an isometry and 
 \begin{align*}
     &\psi_X^{\infty}(\xi)-S^*\rhd \phi_X(\xi)\lhd S \in \cK(B,\beta(X))
 \end{align*}
 for all $X\in \Irr(\cC)$ thanks to the properties of $R$ and that $R-S=R(1-|R|^{-1})\in B$. As the claimed result holds for all $X\in \Irr(\cC)$ then it also holds for all $X\in \cC$ by using the semisimplicity of $\cC$ combined with the naturality of $\cC$-cocycle representations (\ref{item:naturality} Proposition \ref{prop:phiXmaps}).

    \ref{item:3lem3.11} $\Rightarrow$ \ref{item:1lem3.11}. Let $S$ be as in the hypothesis. For $b\in B_1$, $X\in \cC$ and $\xi\in \alpha(X)$, consider the sequence $x_n=Sr_nb$. First $x_n^*b=b^*r_n^*S^*b\rightarrow 0$ as $n\rightarrow \infty$ because $r_n^*c\rightarrow 0$ for all $c\in B$. Also, $x_n^*x_n=b^*b$ for all $n\in \bN$. Finally, as $\beta(X)$ is non-degenerate, any operator $T\in \cK(B,\beta(X))$ satisfies $r_n^*\rhd T\lhd r_n\rightarrow 0$ (this is immediate for finite rank operators and their span is dense in $\cK(B,\beta(X))$ in the norm topology). Therefore
    \begin{align*}
        &b^*\rhd \psi_X(\xi)\lhd b-x_n^*\rhd \phi_X(\xi)\lhd x_n\\
        ={}&b^*\rhd (r_n^*\rhd (\psi_X^{\infty}(\xi)-S^*\rhd \phi_X(\xi)\lhd S)\lhd r_n)\lhd b\longrightarrow 0.
    \end{align*}
    To perform this estimation when quantifying over arbitrary finite sets $K\subset \Irr(\cC)$ and compact sets $\cF_X\subset \alpha(X)$ for $X\in K$ we note that it is sufficient to choose $n$ large enough for the approximations above for finitely many elements $\xi\in\alpha(X)$ for $X\in K$ and use that $\psi_X$ and $\phi_X$ are continuous.
    
    \ref{item:4lem3.11} $\Rightarrow$ \ref{item:3lem3.11}. By hypothesis, there exist $u\in \cM(B)$ and isometries $s_1,s_2\in \cM(B)$ with $1_{\cM(B)}=s_1s_1^*+s_2s_2^*$ such that \[u\rhd \phi_X(\xi)\lhd u^*-s_1\rhd \psi_X^{\infty}(\xi)\lhd s_1^{*}-s_2\rhd \phi_X(\xi)\lhd s_2^*\in \cK(B,\beta(X))\] for all $X\in\cC$, $\xi\in\alpha(X)$. Then $S=u^*s_1$ is the required isometry.
    
    \ref{item:3lem3.11} $\Rightarrow$ \ref{item:4lem3.11}. The double infinite repeat $(\psi^{\infty})^{\infty}$ is unitarily equivalent to $\psi^{\infty}$. Therefore choose an isometry $S\in \cM(B)$ so that
    \[S^*\rhd\phi_X(\xi)\lhd S-(\psi_X^\infty)^\infty(\xi)\in \cK(B,\beta(X))\] 
    for all $\xi\in \alpha(X)$ and $X\in \cC$. Conjugating by $r_n^*$ and setting $s_n=Sr_n$ one has that
    \begin{equation}\label{eqn:convergence}
    s_n^*\rhd\phi_X(\xi)\lhd s_n-\psi_X^\infty(\xi)\longrightarrow 0
    \end{equation}
     as $n\rightarrow \infty$ for all $\xi\in \alpha(X)$ and $X\in \cC$ (see also the proof of \ref{item:3lem3.11} $\Rightarrow$ \ref{item:1lem3.11}). Note that the sequence $s_n$ satisfies the conditions of Lemma \ref{lem:isometryabsorption}. Indeed, $s_{n+1}^*s_n=0$ for any $n\geq 1$. Moreover, combining \eqref{eqn:convergence} and the properties of $\cC$-cocycle representations one has that 
     \begin{align*}
         &(s_n\rhd \psi_X^{\infty}(\xi)-\phi_X(\xi)\lhd s_n)^*\circ (s_n\rhd \psi_X^{\infty}(\xi)-\phi_X(\xi)\lhd s_n)\\
         ={}&\psi_X^\infty(\xi)^*\circ (\psi_X^{\infty}(\xi)-s_n^*\rhd \phi_X(\xi)\lhd s_n)\\
         -{}&s_n^*\rhd \phi_X(\xi)^*\lhd s_n\circ \psi_X^{\infty}(\xi)+s_n^*\rhd \phi_X(\xi)^*\circ \phi_X(\xi)\lhd s_n\\
         ={}&\psi_X^\infty(\xi)^*\circ (\psi_X^{\infty}(\xi)-s_n^*\rhd \phi_X(\xi)\lhd s_n)\\
         +{}&(s_n^*\phi_{1_{\cC}}(\langle \xi,\xi\rangle)s_n-\psi^\infty_{1_\cC}(\langle\xi,\xi\rangle))\\
         +{}&(\psi_X^{\infty}(\xi)-s_n^*\rhd \phi_X(\xi)\lhd s_n)^*\circ\psi_X^{\infty}(\xi)\\
         &\hspace{-1.5em}\longrightarrow 0.
     \end{align*}
     Therefore, as each term on the right hand side of the equation above is in $\cK(B,\beta(X))$ for every $n\in \bN$, so is the product $(s_n\rhd \psi_X^{\infty}(\xi)-\phi_X(\xi)\lhd s_n)^*\circ (s_n\rhd \psi_X^{\infty}(\xi)-\phi_X(\xi)\lhd s_n)$. Now $s_n\rhd \psi_X^{\infty}(\xi)-\phi_X(\xi)\lhd s_n$ is an element of the C$^*$-algebra $\cK(B\oplus\beta(X))$ and so by polar decomposition (see for example \cite[Proposition II.3.2.1]{BlackadarEncycBook}) 
     one has that $s_n\rhd \psi_X^{\infty}(\xi)-\phi_X(\xi)\lhd s_n\in \cK(B,\beta(X))$ for all $n\in \bN$. Moreover, the computation above shows that $\|s_n\rhd \psi_X^{\infty}(\xi)-\phi_X(\xi)\lhd s_n\|\longrightarrow 0$. The result now follows from Lemma \ref{lem:isometryabsorption} as the elements
     \[\widetilde{r}_i=\sum_{n=1}^\infty r_{2n-i}r_n^*\in \cM(B),\ i=0,1\] 
     give the required copy of $\cO_2$.
\end{proof}

\begin{cor}\label{cor:weakcontgivesasym}
    Let $\phi,\psi\colon(A,\alpha,\fu)\rightarrow (B,\beta,\fv)$ be $\cC$-cocycle representations with $A$ separable and $B$ $\sigma$-unital and stable.\ Then $\psi\preccurlyeq_{\cC} \phi$ if and only if $\phi^{\infty}\sim_{\cC}\psi^{\infty}\oplus \phi^{\infty}$.
\end{cor}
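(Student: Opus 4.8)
The plan is to deduce the statement from \cref{lem:biglemsec3,lem:containmentinsum}. The key observation is that applying \cref{lem:biglemsec3} with $\phi$ replaced by the infinite repeat $\phi^{\infty}$ (which is legitimate since $\phi^{\infty}$ is again a $\cC$-cocycle representation $(A,\alpha,\fu)\to(B,\beta,\fv)$, as $B$ is stable), the equivalence of its conditions (i) and (iv) reads
\[
\text{$\psi$ is $\cC$-contained in $\phi^{\infty}$ at infinity}
\quad\Longleftrightarrow\quad
\phi^{\infty}\sim_{\cC}\psi^{\infty}\oplus\phi^{\infty}.
\]
So it suffices to prove that $\psi\preccurlyeq_{\cC}\phi$ if and only if $\psi$ is $\cC$-contained in $\phi^{\infty}$ at infinity. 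The ``if'' direction is easy: $\cC$-containment of $\psi$ in $\phi^{\infty}$ at infinity implies $\psi\preccurlyeq_{\cC}\phi^{\infty}$, and since $\phi^{\infty}\preccurlyeq_{\cC}\phi$ by \cref{lem:containmentinsum} and $\preccurlyeq_{\cC}$ is transitive, we get $\psi\preccurlyeq_{\cC}\phi$.

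For the ``only if'' direction assume $\psi\preccurlyeq_{\cC}\phi$. By \cref{lem:containmentinsum} (which gives $\phi\preccurlyeq_{\cC}\phi^{\infty}$) and transitivity we have $\psi\preccurlyeq_{\cC}\phi^{\infty}$, so the remaining task is to upgrade this weak containment to $\cC$-containment in $\phi^{\infty}$ at infinity, i.e.\ to extract a \emph{single} approximating element that is in addition approximately orthogonal to the chosen $b$. Fix $\varepsilon>0$, $b\in B_1$, a finite set $K\subset\Irr(\cC)$ and compact sets $\cF_X\subset\alpha(X)_1$, and write $\phi^{\infty}=\sum_n r_n\rhd\phi\lhd r_n^{*}$ for isometries $r_n\in\cM(B)$ with pairwise orthogonal ranges summing strictly to $1$. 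Using $\psi\preccurlyeq_{\cC}\phi$ together with \cref{rmk:simplificationsweak}, choose contractions $c_1,\dots,c_M\in B$ with
\[
\Bigl\| b^{*}\rhd\psi_X(\xi)\lhd b-\sum_{m=1}^{M}c_m^{*}\rhd\phi_X(\xi)\lhd c_m \Bigr\|\le\varepsilon
\quad\text{and}\quad
\Bigl\| b^{*}b-\sum_{m=1}^{M}c_m^{*}c_m \Bigr\|\le\varepsilon
\]
for all $X\in K$, $\xi\in\cF_X$. Since the increasing sum $\sum_n r_nr_n^{*}$ converges strictly to $1$, one has $\|r_n^{*}b\|\to0$, so we may pick $L$ with $\|r_n^{*}b\|\le\varepsilon/M$ for $n\ge L$ and put $x\coloneqq\sum_{m=1}^{M}r_{L+m-1}c_m\in B$. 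A direct computation using $r_{L+m-1}^{*}\rhd\phi^{\infty}_X(\xi)\lhd r_{L+m'-1}=\delta_{m,m'}\,\phi_X(\xi)$ for $1\le m,m'\le M$ yields $x^{*}\rhd\phi^{\infty}_X(\xi)\lhd x=\sum_m c_m^{*}\rhd\phi_X(\xi)\lhd c_m$ and $x^{*}x=\sum_m c_m^{*}c_m$, so the two displayed estimates become \eqref{eqn:weakinf} and \eqref{eqn:unitcondweak}, while $\|x^{*}b\|\le\sum_m\|c_m\|\,\|r_{L+m-1}^{*}b\|\le\varepsilon$ gives \eqref{eqn:orthogonalityweak}. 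Hence $\psi$ is $\cC$-contained in $\phi^{\infty}$ at infinity, as required.

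The main obstacle is precisely this last orthogonalisation step: weak containment only provides finitely many scattered approximants $c_m$, and turning them into one element orthogonal to $b$ is what forces the passage to $\phi^{\infty}$ rather than $\phi$ — the infinite repeat supplies infinitely many mutually orthogonal copies of $\phi$, into which the $c_m$ can be loaded at indices $\ge L$ chosen large enough that $\|r_n^{*}b\|$ is negligible. Everything else is formal bookkeeping: reflexivity and transitivity of $\preccurlyeq_{\cC}$, the two comparisons $\phi\preccurlyeq_{\cC}\phi^{\infty}\preccurlyeq_{\cC}\phi$ of \cref{lem:containmentinsum}, the implication ``$\cC$-containment at infinity $\Rightarrow$ $\cC$-weak containment'', and the translation between conditions (i) and (iv) of \cref{lem:biglemsec3}; one should also keep in mind the reductions of \cref{rmk:simplificationsweak} (it suffices to quantify over $\cF_X\subset\alpha(X)_1$ and over contractions) when verifying the weak-containment conditions.
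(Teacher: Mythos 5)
Your proof is correct and follows essentially the same approach as the paper: you reduce the claimed equivalence to ``$\psi$ is $\cC$-contained in $\phi^{\infty}$ at infinity'' via \cref{lem:biglemsec3} and then carry out the same orthogonalisation trick (loading the finitely many approximants $c_m$ into summands of $\phi^{\infty}$ at indices $\geq L$ chosen so that $r_n^*b$ is small) for the forward direction. The only difference is cosmetic: in the ``if'' direction the paper deduces $\psi^{\infty}\oplus\phi^{\infty}\preccurlyeq_{\cC}\phi^{\infty}$ from $\phi^{\infty}\au\psi^{\infty}\oplus\phi^{\infty}$ via \cref{lem:densecontainment} and then chains weak containments, whereas you invoke the implication (iv)$\Rightarrow$(i) of \cref{lem:biglemsec3} directly; both are valid and yield the same chain $\psi\preccurlyeq_{\cC}\phi^{\infty}\preccurlyeq_{\cC}\phi$.
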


\begin{proof}
    Let $r_n\in \cM(B)$ be isometries such that $\sum_{n=1}^\infty r_nr_n^*=1$ in the strict topology; all infinite repeats in this proof will be constructed with respect to this sequence. We start by showing the if direction. Note that $\phi^{\infty}\sim_{\cC}\psi^{\infty}\oplus \phi^{\infty}$ implies that $\phi^{\infty}\au\psi^{\infty}\oplus \phi^{\infty}$. Then, $\psi^{\infty}\oplus \phi^{\infty}\preccurlyeq_{\cC} \phi^{\infty}$ by Lemma \ref{lem:densecontainment}. Therefore, one has that
    \[\psi\preccurlyeq_{\cC} \psi^{\infty}\preccurlyeq_{\cC} \psi^{\infty}\oplus \phi^{\infty}\preccurlyeq_{\cC} \phi^{\infty}\preccurlyeq_{\cC} \phi,\] 
    where the first and last $\cC$-weak containments follow from Lemma \ref{lem:containmentinsum}. As $\preccurlyeq_{\cC}$ is transitive it follows that $\psi\preccurlyeq_{\cC} \phi$.
    
    \par We now turn to the only if direction. By Lemma \ref{lem:biglemsec3} it suffices to show that $\psi$ is $\cC$-contained in $\phi^{\infty}$ at infinity. Let $ K\subset \Irr(\cC)$ be finite, $\cF_X\subset \alpha(X)$ for $X\in K$ be compact, $\varepsilon>0$, and $b\in B_1$. As $\psi\preccurlyeq_{\cC} \phi$, there exist $d_i$ for $1\leq i \leq k$ so that for all $X\in K$
    \[\sup_{\xi\in \cF_X}\|b^*\rhd \psi_X(\xi)\lhd b-\sum_{i=1}^k d_i^*\rhd \phi_X(\xi)\lhd d_i\|\leq \varepsilon\] and
    \[\|b^*b-\sum_{i=1}^k d_i^*d_i\|\leq \varepsilon.\] 
    We now let $N$ be so that $\|b-\sum_{i=1}^N r_ir_i^*b\|\leq \varepsilon$ and set $x=\sum_{i=1}^kr_{N+i}d_i$. It follows that for all $\xi\in\cF_X$ and $X\in K$
    \[x^*\rhd \phi_X^{\infty}(\xi)\lhd x=\sum_{i=1}^kd_i^*\rhd \phi_X(\xi)\lhd d_i=_{\varepsilon} b^*\rhd \psi_X(\xi)\lhd b.\] 
    A similar estimation shows that $\|x^*x-b^*b\|\leq \varepsilon$, which gives that $\|x\|\leq 1+\varepsilon$. Moreover, $\|x^*b\|= \|x^*(b-\sum_{i=1}^N r_ir_i^*b)\|\leq \varepsilon+\varepsilon^2$. This shows that $\psi$ is $\cC$-contained in $\phi^{\infty}$ at infinity, which finishes the proof.
\end{proof}

We may now show the existence of absorbing $\cC$-cocycle representations between separable $\cC$-C$^*$-algebras.

\begin{theorem}\label{thm:absorbingexists}
    Let $\cC$ be a unitary tensor category with countably many isomorphism classes of simple objects and $(A,\alpha,\fu)$, $(B,\beta,\fv)$ be separable $\cC$-C$^*$-algebras with $B$ stable. Then there exists an absorbing $\cC$-cocycle representation from $(A,\alpha,\fu)$ to $(B,\beta,\fv)$.
\end{theorem}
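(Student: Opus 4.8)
The plan is to produce $\theta$ as a sufficiently large infinite direct sum of $\cC$-cocycle representations and then read off absorption from the weak-containment machinery developed above, in particular from \cref{cor:weakcontgivesasym}. Almost all of the real work is already done in \cref{lem:biglemsec3} and \cref{cor:weakcontgivesasym}; the proof of the theorem itself is a short assembly.

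\textbf{Step 1 (separability of the morphism space).} Since $A$ and $B$ are separable and $\Irr(\cC)$ is countable, by \cref{rmk:topologyHomcC} the space $\Hom^\cC((\alpha,\fu),(\beta,\fv))$ is second countable and metrisable, hence separable. Fix a sequence $\{\theta^{(n)}\}_{n\in\bN}$ that is dense in $\Hom^\cC((\alpha,\fu),(\beta,\fv))$ for this topology.

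\textbf{Step 2 (construction).} As $B$ is separable, hence $\sigma$-unital, and stable, infinite direct sums of $\cC$-cocycle representations into $(B,\beta,\fv)$ make sense; set $\Psi\coloneqq\bigoplus_{n\in\bN}\theta^{(n)}$ and then $\theta\coloneqq\Psi^{\infty}$. I claim $\theta$ is absorbing.

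\textbf{Step 3 (universal weak containment).} Let $\phi\colon(A,\alpha,\fu)\to(B,\beta,\fv)$ be an arbitrary $\cC$-cocycle representation. Since $\{\theta^{(n)}\}_n$ is dense, $\phi$ lies in its closure, so \cref{lem:densecontainment} gives $\phi\preccurlyeq_\cC\{\theta^{(n)}\}_n$. By \cref{lem:containmentinsum} this is equivalent to $\phi\preccurlyeq_\cC\Psi$, and, applying the ``in particular'' part of \cref{lem:containmentinsum} to $\Psi$, we have $\Psi\preccurlyeq_\cC\Psi^\infty=\theta$; transitivity of $\preccurlyeq_\cC$ then yields $\phi\preccurlyeq_\cC\theta$.

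\textbf{Step 4 (absorption).} By \cref{cor:weakcontgivesasym}, $\phi\preccurlyeq_\cC\theta$ implies $\theta^\infty\sim_\cC\phi^\infty\oplus\theta^\infty$. Since $\theta=\Psi^\infty$, the double infinite repeat $\theta^\infty=(\Psi^\infty)^\infty$ is unitarily equivalent to $\Psi^\infty=\theta$ (as already observed in the proof of \cref{lem:biglemsec3}), and a unitary equivalence is a special case of $\sim_\cC$ via a constant unitary path; likewise $\oplus$ preserves unitary equivalence. Hence $\theta\sim_\cC\phi^\infty\oplus\theta$. Finally, using that $\sim_\cC$ is preserved under $\oplus$ with a fixed cocycle representation (the remark after \cref{defn:asympunitaryeq}), together with associativity and commutativity of $\oplus$ up to unitary equivalence and the identity $\phi\oplus\phi^\infty\cong\phi^\infty$, we get
\[ \phi\oplus\theta \;\sim_\cC\; \phi\oplus(\phi^\infty\oplus\theta) \;\cong\; \phi^\infty\oplus\theta \;\sim_\cC\; \theta . \]
Thus $\theta\oplus\phi\sim_\cC\theta$ for every $\phi$, i.e.\ $\theta$ is absorbing.

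\textbf{Main obstacle.} The substantive difficulties are all upstream of this statement: the duality computations in \cref{lemma:barnotation,lem:usingisometryofstar} and the quasicentral approximate unit of \cref{lem:approxunit} feeding into the long argument of \cref{lem:biglemsec3}, and the chain of equivalences of \cref{cor:weakcontgivesasym}. Given those, the only points requiring care in this proof are (i) that the topology of \cref{rmk:topologyHomcC} which makes $\Hom^\cC$ separable is exactly the topology whose closure appears in \cref{lem:densecontainment}, and (ii) the bookkeeping around the double infinite repeat $(\Psi^\infty)^\infty\cong\Psi^\infty$, the absorption $\phi\oplus\phi^\infty\cong\phi^\infty$, and the fact that unitary equivalence is an instance of $\sim_\cC$ stable under $\oplus$.
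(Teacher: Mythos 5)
Your proposal is correct and follows essentially the same route as the paper: choose a dense sequence $\{\theta^{(n)}\}$ using the second countability from \cref{rmk:topologyHomcC}, form the infinite direct sum, deduce universal $\cC$-weak containment from \cref{lem:densecontainment} and \cref{lem:containmentinsum}, and conclude via \cref{cor:weakcontgivesasym}. The only cosmetic difference is that the paper names $\bigoplus_n\theta^{(n)}$ as $\theta$ and exhibits $\theta^\infty$ as the absorbing representation, whereas you build the infinite repeat into the definition of $\theta$ from the start; the bookkeeping with $(\Psi^\infty)^\infty\cong\Psi^\infty$ and $\phi\oplus\phi^\infty\cong\phi^\infty$ is the same in both.
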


\begin{proof}
When $A$ and $B$ are separable and $\cC$ has countably many isomorphism classes of simple objects, the topology on $\Hom^\cC((\alpha,\fu),(\beta,\fv))$ is second countable by \cref{rmk:topologyHomcC}. 
Therefore, there exists a sequence of $\cC$-cocycle representations $\theta^{(n)}\colon(A,\alpha,\fu)\rightarrow (B,\beta,\fv)$ which is dense in the set of $\cC$-cocycle representations from $(A,\alpha,\fu)$ to $(B,\beta,\fv)$. We denote by $\theta$ the infinite direct sum $\bigoplus_{n=1}^\infty \theta^{(n)}$. Combining Lemma \ref{lem:densecontainment} and Lemma \ref{lem:containmentinsum} we have that any $\cC$-cocycle representation $\phi\colon(A,\alpha,\fu)\rightarrow (B,\beta,\fv)$ satisfies $\phi\preccurlyeq_{\cC} \theta$. It follows from Corollary \ref{cor:weakcontgivesasym} that 
\[\theta^{\infty}\sim_{\cC} \phi^{\infty}\oplus \theta^{\infty}=\phi\oplus (\phi^{\infty}\oplus \theta^{\infty})\sim_{\cC} \phi\oplus \theta^{\infty}.\qedhere\]
\end{proof}

\begin{rmk}\label{rmk:extabsorbing}
    We note that our arguments, as in \cite[Theorem 3.16]{GASZ22}, also show the existence of absorbing $\cC$-cocycle representations in $\sigma$-additive sets of $\cC$-cocycle representations between separable $\cC$-C$^*$-algebras. This may be relevant for future applications.  
\end{rmk}

\begin{cor}\label{cor:KKthroughabsorbing}
 Let $A$ and $B$ be separable $\cC$-C$^*$-algebras and let $\theta\colon(A,\alpha,\fu)\rightarrow (B^\rs,\beta^\rs,\fv^\rs)$ be an absorbing $\cC$-cocycle representation. Then every element $z\in \KK^{\cC}((\alpha,\fu),(\beta,\fv))$ may be expressed as a $\cC$-Cuntz pair $[\phi,\theta]$ for some absorbing $\cC$-cocycle representation $\phi\colon(A,\alpha,\fu)\rightarrow (B^\rs,\beta^\rs,\fv^\rs)$.
\end{cor}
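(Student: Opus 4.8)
The plan is to combine the Cuntz--Thomsen picture of $\KK^\cC$ (Theorem \ref{thm:Cuntzpicture}) with the existence and uniqueness of absorbing $\cC$-cocycle representations (Theorem \ref{thm:absorbingexists} and Remark \ref{rmk:uniquenessabsorbing}). First I would use Theorem \ref{thm:Cuntzpicture} to represent the given class $z\in\KK^\cC((\alpha,\fu),(\beta,\fv))$ by some $\cC$-Cuntz pair $(\phi_0,\psi_0)\in\bE^\cC((\alpha,\fu),(\beta,\fv))$, so $\phi_0,\psi_0\colon(A,\alpha,\fu)\to(B^\rs,\beta^\rs,\fv^\rs)$ with $\phi_{0,X}(\xi)-\psi_{0,X}(\xi)\in\cK(B^\rs,\beta^\rs(X))$ for all $X,\xi$. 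The idea is then to ``absorb $\theta$'' into both legs of this pair. Since $B^\rs$ is stable, we may fix isometries $r_1,r_2\in\cM(B^\rs)$ generating a copy of $\cO_2$ and pass to the Cuntz sum $(\phi_0\oplus_{r_1,r_2}\theta,\ \psi_0\oplus_{r_1,r_2}\theta)$, which by \cref{lemma: CuntzSumGroup} (or directly, since the diagonal pair is degenerate) is homotopic to $(\phi_0,\psi_0)$ and hence still represents $z$.

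Next I would arrange that the second leg becomes exactly $\theta$. Set $\psi\coloneqq\psi_0\oplus_{r_1,r_2}\theta$ and $\phi\coloneqq\phi_0\oplus_{r_1,r_2}\theta$, and observe that since $\theta$ is absorbing, $\theta\sim_\cC\theta\oplus\psi_0$ and also $\theta\sim_\cC\theta\oplus\phi_0$; combined with the fact that adding $\theta$ on the other side preserves $\sim_\cC$ over a stable algebra (the remark following \cref{defn:asympunitaryeq}), we get $\psi\sim_\cC\theta$ via some norm-continuous unitary path $u_t$. Crucially, because $(\phi_0,\psi_0)$ is a Cuntz pair and adding $\theta$ to both sides preserves this, $(\phi,\psi)$ is again a Cuntz pair, so $u_t\in\fD_\phi$ for all $t$ by \cref{rmk: asympcomments}. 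Hence $(\phi,\theta)=(\phi,\lim_t \Ad(u_t)\psi)$; more precisely, replacing $\phi$ by $\Ad(u_1^*)\circ\phi$ and using that conjugating one leg of a Cuntz pair by a unitary in $\fD_\phi$ does not change the homotopy class (again \cref{rmk: asympcomments} applied symmetrically, or a direct homotopy), we obtain that $z=[\Ad(u_1^*)\circ\phi,\ \theta]$ with $\Ad(u_1^*)\circ\phi$ still a Cuntz pair against $\theta$. Finally, $\phi=\phi_0\oplus_{r_1,r_2}\theta$ is absorbing since $\theta$ is: for any $\cC$-cocycle representation $\rho$, one computes $\phi\oplus\rho=(\phi_0\oplus_{r_1,r_2}\theta)\oplus\rho\sim_\cC\phi_0\oplus(\theta\oplus\rho)\sim_\cC\phi_0\oplus\theta=\phi$ using associativity of Cuntz sums up to unitary equivalence and the absorbing property of $\theta$; conjugating by a unitary preserves being absorbing, so $\Ad(u_1^*)\circ\phi$ is absorbing as well. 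Renaming $\Ad(u_1^*)\circ\phi$ as $\phi$ gives the statement.

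The main obstacle I anticipate is bookkeeping rather than conceptual: one must be careful that all the Cuntz sums, infinite repeats, and $\Ad(u)$-conjugations are taken with respect to compatible choices of $\cO_2$-generating isometries inside $\cM(B^\rs)$ (using that the isomorphism class of a Cuntz sum is independent of this choice, as recorded before \cref{defn:CuntzDirectSum}), and that passing from ``$\psi\sim_\cC\theta$ with $u_t\in\fD_\phi$'' to ``$z=[\phi,\theta]$'' is justified by the homotopy exhibited in \cref{rmk: asympcomments}: the path $(\Ad(u_{1/t})\circ\phi\ (\text{or }\psi),\ \theta)$ for $t\in(0,1]$ extended by $(\theta,\theta)$ at $t=0$, which is a legitimate element of $\bE^\cC((\alpha,\fu),(\beta\otimes\id_{C[0,1]},\fv\otimes1))$ because $u_t\in\fD_\phi$ guarantees the Cuntz-pair condition holds along the path. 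Verifying that this path is indeed norm-continuous into $\Hom^\cC$ and lands in the Cuntz-pair set at every time is the one place requiring genuine (though routine) care.
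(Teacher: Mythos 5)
Your overall route is exactly the paper's: write $z=[\phi_0,\psi_0]$, Cuntz-sum $\theta$ onto both legs, use absorption to produce a unitary path $u_t$ with $\Ad(u_t)\circ(\psi_0\oplus\theta)\to\theta$, replace the second leg by $\theta$ at the cost of conjugating the first leg by $u_1$, and observe that the resulting first leg is absorbing. However, your justification of the central replacement step rests on two claims that are not correct as stated. First, ``$u_t\in\fD_\phi$ by \cref{rmk: asympcomments}'' does not follow: that remark yields $u_t\in\fD_\phi$ only when the two legs of the relation $\sim_\cC$ themselves form a Cuntz pair, whereas here the relation is between $\psi=\psi_0\oplus\theta$ and $\theta$, which need not agree modulo compacts. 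Condition \ref{item:Dphiu} of \cref{defn:asympunitaryeq} only gives $\theta_X(\xi)-u_t\rhd\psi_X(\xi)\lhd u_t^*\in\cK(B^\rs,\beta^\rs(X))$, hence (using that $(\phi,\psi)$ is a Cuntz pair) that $(\Ad(u_t)\circ\phi,\theta)$ is a Cuntz pair for every $t$ --- not that $[u_t,\phi_X(\xi)]\in\cK(B^\rs,\beta^\rs(X))$. Second, and more seriously, ``conjugating one leg of a Cuntz pair by a unitary in $\fD_\phi$ does not change the homotopy class'' is false in general: for a single $u\in\cU(\fD_\phi)$ the class $[\Ad(u)\circ\phi,\phi]$ is typically nonzero (such classes are precisely what the stable uniqueness theorem is about), and \cref{rmk: asympcomments} gives vanishing only in the presence of a full asymptotic path, not for one unitary in $\fD_\phi$.

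The step is easily repaired, and this is how the paper argues. Since $B^\rs$ is stable and $\sigma$-unital, $u_1$ is path-connected to $1$ in $\cU(\cM(B^\rs))$, so conjugating \emph{both} legs gives $[\phi,\psi]=[\Ad(u_1)\circ\phi,\Ad(u_1)\circ\psi]$. The homotopy $(\Ad(u_{1/t})\circ\psi,\theta)$ for $t\in(0,1]$, extended by the degenerate pair $(\theta,\theta)$ at $t=0$ (legitimate because each $(\Ad(u_s)\circ\psi,\theta)$ is a Cuntz pair and $\Ad(u_s)\circ\psi\to\theta$), shows $[\Ad(u_1)\circ\psi,\theta]=0$; additivity then gives $z=[\Ad(u_1)\circ\phi,\theta]$. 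Note that the analogous path with $\phi$ in place of $\psi$ does \emph{not} converge to $(\theta,\theta)$, since the compact difference $\phi_X(\xi)-\psi_X(\xi)$ is carried along by the conjugation, so one really must route the vanishing through $\psi$. Your remaining points --- that $(\Ad(u_1)\circ\phi,\theta)$ is a Cuntz pair, and that $\phi_0\oplus\theta$ and hence its unitary conjugate is absorbing --- are fine and match the paper.
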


\begin{proof}
 Let $z=[\psi,\kappa]$ for some $\cC$-Cuntz pair from $(\alpha,\fu)$ to $(\beta,\fv)$. As $\theta$ is absorbing, one has that $\kappa\oplus\theta\sim_{\cC}\theta$ so there exists a path of unitaries $w\colon[1,\infty)\rightarrow \cU(\cM(B^\rs))$ such that $[\Ad(w_1)\circ (\kappa\oplus\theta),\theta]$ forms a degenerate $\cC$-Cuntz pair.
 Therefore, $[\Ad(w_1)\circ (\kappa\oplus\theta),\theta]=0$ in $\KK^{\cC}((\alpha,\fu),(\beta,\fv))$ by Lemma~\ref{lemma: CuntzSumGroup}.
 Moreover, also using that any unitary in $\cU(\cM(B^\rs))$ is path connected to the identity, one has that
 \begin{align*}
     z&=[\psi,\kappa]\\
     &=[\psi\oplus \theta,\kappa\oplus \theta]\\
     &=[\Ad(w_1)\circ (\psi\oplus \theta),\Ad(w_1)\circ (\kappa\oplus \theta)]+[\Ad(w_1)\circ (\kappa\oplus \theta),\theta]\\
     &=[\Ad(w_1)\circ (\psi\oplus \theta),\theta].
 \end{align*}
 Where the last equality follows as $[\psi,\phi]+[\phi,\kappa]=[\psi,\kappa]$ for any $\cC$-Cuntz pairs (this follows from the argument of \cite[(ii) Lemma 1.10]{GASZ22}). Since $\theta$ is absorbing, so is $\Ad(w_1)\circ (\psi\oplus \theta)$, so the conclusion follows.
\end{proof}

\section{Obtaining asymptotic unitary equivalence}\label{sect: AsymptEquiv}

Similarly to the results in \cite[Section 4]{GASZ22}, the goal of this section is to show that if two $\cC$-cocycle representations are $\cC$-operator homotopic, then the unitary path witnessing the homotopy can be changed to take values in the minimal unitisation of $B$ instead of its multiplier algebra. Before proving this result, let us state an appropriate notion of asymptotic unitary equivalence.

\begin{defn}\label{defn:properasympunitaryeq}
    Let $\phi,\psi\colon(A,\alpha,\fu)\rightarrow (B,\beta,\fv)$ be $\cC$-cocycle representations. We say that $\phi$ and $\psi$ are \emph{$\cC$-properly asymptotically unitarily equivalent} if there exists a norm-continuous path $u\colon[0,\infty)\rightarrow \cU(1+B)$ such that for all $X\in \cC$ and $\xi\in \alpha(X)$
  \[\psi_X(\xi)=\lim_{t\rightarrow\infty} u_t\rhd \phi_X(\xi)\lhd u_t^*.\] If we can further arrange that $u_0 = 1$, then we say that $\phi$ and $\psi$ are \emph{$\cC$-strongly asymptotically unitarily equivalent}. Moreover, if $B$ is stable, we say that $\phi$ and $\psi$ are \emph{stably $\cC$-properly (resp. strongly) asymptotically unitarily equivalent} if there exists a $\cC$-cocycle representation $\theta\colon (A, \alpha,\fu) \to (B, \beta,\fv)$ such that $\phi \oplus \theta$ is $\cC$-properly (resp. strongly) asymptotically unitarily equivalent to $\psi\oplus\theta$.
\end{defn}

The following two lemmas are the main technical tools in showing that $\cC$-operator homotopy implies $\cC$-strong asymptotic unitary equivalence. The proofs essentially follow the strategy in \cite[Lemma 4.2, Lemma 4.3]{GASZ22}, with a key technical difference being the use of Lemma \ref{lem:approxunit}.

\begin{lemma}\label{lemma: ChangeUnitPath1}
Let $A$ be a separable $\cC$-C$^*$-algebra, $B$ be a $\sigma$-unital $\cC$-C$^*$-algebra, and $D_X\subseteq \cL(B,\beta(X))$ be a separable closed linear subspace for each $X\in\Irr(\cC)$. Let $\phi\colon(A,\alpha,\fu)\rightarrow (B,\beta,\fv)$ be a $\cC$-cocycle representation. Let $U\colon[0,\infty)\to \cU(\fD_\phi)$ be a norm-continuous path such that $U_0=1$, $U|_{[1,\infty)}$ is constant, $\|U_t-1\|<2$ for any $t\in[0,1]$, and $[U_t,D_X]\subseteq\cK(B,\beta(X))$ for any $X\in\cC$. Then for all sequences $\varepsilon_n>0$ over $n\geq 0$, all finite sets $K_n\subseteq \Irr(\cC)$, and all compact sets $\cF_n(X)\subseteq D_X$ for any $X\in K_n$, there exists a norm-continuous path $v \colon [0, \infty) \to \cU(1 + B)$ such that \[\|[v_t^*U_t,\eta]\|\leq \varepsilon_n\] for any $n\geq 0$, $t\in[n,n+1]$, $X\in K_n$, and any $\eta\in\cF_n(X)$. Moreover, we can choose the path $v$ such that $v_t=1$ whenever $U_t=1$. 
%Let $A$ be a separable $\cC$-C$^*$-algebra, $B$ be a $\sigma$-unital $\cC$-C$^*$-algebra, and $D_X\subseteq \cL(B,\beta(X))$ be a separable closed linear subspace for each $X\in\cC$. Let $\phi\colon(A,\alpha,\fu)\rightarrow (B,\beta,\fv)$ be a cocycle representation. Let $U\colon[0,\infty)\to \cU(\fD_\phi)$ be a norm-continuous path such that $U_0=1$, $U|_{[1,\infty)}$ is constant, $\|U_t-1\|<2$ for any $t\in[0,1]$, and $[U_t,D_X]\subseteq\cK(B,\beta(X))$ for any $X\in\cC$. Then for all sequences $\varepsilon_n>0$, all finite sets $K_n\subseteq \Irr(\cC)$, all finite sets $\cG_n(X)\subseteq \alpha(X)$ for any $X\in K_n$, and all compact sets $\cF_n(X)\subseteq D_X$ for any $X\in K_n$, there exists a norm-continuous path $v \colon [0, \infty) \to \cU(1 + B)$ such that \[\|[v_t^*U_t,\phi_X(\xi)]\|\leq\varepsilon_n\] and \[\|[v_t^*U_t,\eta]\|\leq \varepsilon_n\] for any $n\geq 0$, $t\in[n,n+1]$, $X\in K_n$, $\xi\in\cG_n(X)$, and any $\eta\in\cF_n(X)$. Moreover, we can choose the path $v$ such that $v_t=1$ whenever $U_t=1$. 
\end{lemma}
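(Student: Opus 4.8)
The plan is to construct $v$ piece-by-piece on the intervals $[n, n+1]$, using the crucial input that the commutators $[U_t, \eta]$ lie in $\cK(B,\beta(X))$, so that a suitable approximate unit of $B$ can be used to "cut down" $U_t$ to something that differs from an element of $1+B$ only by something small on the relevant compact sets. First I would invoke Lemma \ref{lem:approxunit} applied to the separable subspaces $D_X \subseteq \cL(B,\beta(X))$: this yields an increasing approximate unit $(e_m)_m$ of $B$ which is quasicentral for every $\eta \in D_X$ and satisfies $e_m \rhd T, T \lhd e_m \to T$ for all $T \in \cK(B, \beta(X))$. Enlarging the compact sets if necessary, I may assume the $\cF_n(X)$ are increasing in $n$ and (after reindexing) that the approximate unit is chosen fine enough that for each $n$ there is an index $m_n$ with $\|[e_{m_n}, \eta]\| $ and $\|(1-e_{m_n})\rhd[U_t,\eta]\|$ both controlled by (a fraction of) $\varepsilon_n$ simultaneously for all $t \in [0,1]$, $X \in K_n$, $\eta \in \cF_n(X)$; the latter uses compactness of $\cF_n(X)$ together with norm-continuity of $t \mapsto U_t$ and of the maps $\phi_X$, so that $\{[U_t,\eta] : t \in [0,1], \eta \in \cF_n(X)\}$ is a norm-compact, hence "uniformly approximable by $e_m$", subset of $\cK(B,\beta(X))$.

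Next I would define $v$ on $[n,n+1]$ by the standard trick of conjugating a "cut-down" of $U_t$ back into the unitisation. Concretely, since $U|_{[1,\infty)}$ is constant, write $U$ for that constant value on $[1,\infty)$; on $[0,1]$ use that $\|U_t - 1\| < 2$ to take a continuous logarithm $h_t = \log U_t \in \fD_\phi$ (a holomorphic branch of $\log$ on $\bC \setminus (-\infty,0]$ works since $-1$ is not in the spectrum), with $h_0 = 0$. Then $e_{m_n}^{1/2} h_t e_{m_n}^{1/2} \in B$ (as $h_t \in \cM(B)$ and $e_{m_n} \in B$), and I set $v_t \coloneqq \exp(i\,\mathrm{Im}(\cdots))$ — more cleanly, I take $v_t$ to be the unitary part of $1 + (e_{m_n}^{1/2} h_t e_{m_n}^{1/2})$-type perturbation, or simply $v_t = \exp(g_t)$ with $g_t = e_{m_n}^{1/2}\, h_t\, e_{m_n}^{1/2} \in B$, rescaled to lie in $\cU(1+B)$; on $[1,\infty)$ I freeze $v$ at this value (this is consistent since $U$ is constant and $g$ can be taken constant there). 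The point is that $v_t^* U_t$ is then, modulo $\cK(B,\beta(X))$ and on the compact sets, close to $\exp(-g_t)\exp(h_t)$ where $g_t$ and $h_t$ agree "up to $1-e_{m_n}$", and conjugating $\eta$ by $\exp(-g_t)\exp(h_t)$ moves $\eta$ by $\|[1 - e_{m_n}\text{-error}, \eta]\|$, which is $\le \varepsilon_n$ by the choice of $m_n$. One also needs to glue the pieces at integer endpoints; since at $t = n+1 = (n+1)+0$ both the $n$-th and $(n+1)$-th formulas use $U_{n+1}$ (with $U_{n+1} = U$ constant for $n \ge 1$), and since $g$ can be taken to vary continuously through the change of cutoff $e_{m_n} \rightsquigarrow e_{m_{n+1}}$ within the interval rather than at the endpoint, continuity of $v$ is arranged. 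The final clause "$v_t = 1$ whenever $U_t = 1$" is immediate from $h_t = 0 \Rightarrow g_t = 0 \Rightarrow v_t = 1$; in particular $v_0 = 1$.

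The main obstacle I anticipate is the bookkeeping that makes $v$ genuinely norm-continuous across all of $[0,\infty)$ while simultaneously: (a) being $1$ exactly where $U$ is, (b) lying in $1+B$ (not just $\cM(B)$), and (c) achieving the commutator bound $\varepsilon_n$ on $[n,n+1]$ — note $\varepsilon_n$ is allowed to shrink, which forces the cutoff index $m_n$ to grow, so the cutoff must be changed continuously from interval to interval without breaking continuity or the estimate. The standard device (as in \cite[Lemma 4.2]{GASZ22}) is to let the cutoff interpolate: on $[n,n+1]$ use $e_t \coloneqq (1-s)e_{m_n} + s\, e_{m_{n+1}}$ for $s = t - n$, which is still a positive contraction in $B$, is quasicentral asymptotically, and the estimates degrade continuously. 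A secondary technical point is that $\exp(g_t)$ with $g_t \in B$ need not literally be in $\cU(1+B)$ unless $g_t^* = -g_t$; so I would either symmetrise ($g_t \coloneqq e_t^{1/2} h_t e_t^{1/2}$ is not skew-adjoint even though $h_t$ is, because $e_t^{1/2}$ doesn't commute with $h_t$) — the clean fix is to replace $h_t$ by its skew-adjoint logarithm directly, i.e. since $U_t \in \cU(\fD_\phi)$ with $\|U_t - 1\| < 2$ we may write $U_t = \exp(a_t)$ with $a_t = -a_t^* \in \fD_\phi$ and $a_0 = 0$ (functional calculus on a unitary avoiding $-1$), set $g_t \coloneqq e_t^{1/2} a_t e_t^{1/2}$ which is genuinely skew-adjoint and in $B$, and then $v_t \coloneqq \exp(g_t) \in \cU(1+B)$ works; the quasicentrality of $e_t$ relative to $a_t$ (equivalently relative to $U_t$, which lies in $\fD_\phi$ — though note $a_t \notin D_X$ in general, so one needs $[e_t, a_t] \to 0$, which requires that $a_t$ lies in a separable C$^*$-subalgebra that we also feed into Lemma \ref{lem:approxunit}, enlarging $D_X$ to include a countable dense subset of $\{a_t : t \in [0,1]\}$) gives $v_t^* U_t = \exp(-g_t)\exp(a_t) \approx \exp(a_t - g_t) = \exp((1-e_t^{1/2}(\cdot)e_t^{1/2})a_t)$, and $1 - e_t^{1/2}(\cdot)e_t^{1/2}$ applied to $a_t$ is a $\cK(B,\beta(X))$-level perturbation precisely because $[a_t, \eta] = [U_t\text{-related stuff}, \eta] \in \cK$, delivering $\|[v_t^* U_t, \eta]\| \le \varepsilon_n$ after the cutoff absorbs the compact part. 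I expect this last estimate to be the heart of the argument and to mirror \cite[Lemma 4.2]{GASZ22} closely once the $\cC$-equivariant $\cK$-ideal property (from Lemma \ref{lem:fD*alg} / the definition of $\fD_\phi$) is in place.
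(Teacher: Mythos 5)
Your proposal is correct and follows essentially the same route as the paper: take a self-adjoint (or skew-adjoint) logarithm $a_t$ of $U_t$, cut it down by a continuously interpolated quasicentral approximate unit supplied by Lemma \ref{lem:approxunit} (applied to data that includes both the $D_X$ and the path $a_t$), exponentiate to get $v_t\in\cU(1+B)$, and control $[v_t^*U_t,\eta]$ via $[a_t-e_ta_te_t,\eta]\to0$. The only preparatory step you elide is that to deduce $[a_t,D_X]\subset\cK(B,\beta(X))$ from the same property of $U_t$ one needs the commutant-mod-compacts set to be a C$^*$-algebra, which requires first enlarging the $D_X$ so that $\overline{D_X}\subset D_{\overline{X}}$ before invoking Lemma \ref{lem:fD*alg}.
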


%\nn{I made the assertion a bit shorter by suppressing $\cG_X$. (This will not lose the generality if we let $\phi_X(\cG_X)\subset \cF_X$.) KK}

\begin{proof}
We start by enlarging $D_X$ such that it is defined for all irreducible objects $X\in \cC$ and such that $\overline{D_{X}}\subset D_{\overline{X}}$. For any irreducible object $Y\in\cC$ and an isomorphism $\nu\colon X\to Y$ with $X\in\Irr(\cC)$, we put the linear subspace $D_Y\coloneqq\beta(\nu)\circ D_X \subset \cL(B,\beta(Y))$, which is independent on the choice of $\nu$ whose ambiguity is only up to scalar multiplication. 
%\marginpar{What is this trying to say? The $D_X$ sets are given, so we implicitly assume they are 'the same' for objects in the same isom class. RN \\ You are right, but the point (if I'm not too pedantic) is to make 'the same' rigorous. For example, if we accidentally chose $\overline{\overline{X}}=X$ (not just isomorphic), we would need $D_X=\mu_XD_X$. This subtlety can be ignored as $D_Y\coloneqq\nu \circ D_X$ is independent of $\nu$. KK}
%We may assume $\phi_X(\alpha(X))\subset D_X$ by taking $\phi_X(\alpha(X))\cup D_X$ if necessary. 
Moreover, we replace $D_X$ with $\overline{\spa}(D_{X} \cup \beta(\mu_X^{-1})\circ \overline{D_{\overline{X}}})$ which satisfies the required conditions. Indeed, by putting 
$\nu\coloneqq (R_{\overline{\overline{X}}}^*\otimes \id_{\overline{X}}) (\id_{\overline{\overline{\overline{X}}}}\otimes\mu_X\otimes \id_{\overline{X}}) (\id_{\overline{\overline{\overline{X}}}}\otimes \overline{R}_{X})\neq 0$, we have
\begin{align*}
    \overline{ D_{X} \cup \beta(\mu_X^{-1})\circ \overline{D_{\overline{X}}} }
    ={}&
    \overline{ \beta(\mu_X^{-1})\circ ( D_{\overline{\overline{X}}} \cup \overline{D_{\overline{X}}} ) }\\
    \stackrel{\eqref{eqn:barinnerprod2}}{=}&
    \beta(\nu) \circ \overline{ D_{\overline{\overline{X}}} \cup \overline{D_{\overline{X}}} }
    \\
    ={}&
    \beta(\mu_{\overline{X}}^{-1}) \circ \overline{ D_{\overline{\overline{X}}} \cup \overline{D_{\overline{X}}} }\\
    ={}&
     D_{\overline{X}}\cup \beta(\mu_{\overline{X}}^{-1})\circ \overline{D_{\overline{\overline{X}}}}. 
\end{align*}
Note that the second equality holds by expanding the definition of the bar notation and using the conjugate equations, whereas in the third equality we use that $\beta(\nu)$ and $\beta(\mu_{\overline{X}}^{-1})$ only differ by a scalar due to the irreducibility of $\overline{X}$.
Then we have the C$^*$-subalgebra $\fD_D\subset \cM(B)$ given by \cref{lem:fD*alg}. 
%which is contained in $\fD_\phi$. Using the notation in the statement of the lemma, we can assume that the sets $\cG_k(X)$ and $\cF_k(X)$ consists of contractions and satisfy $\phi_X(\cG_k(X))\subset \cF_k(X)$. 
Using the notation in the statement of the lemma, we can assume that the sets $\cF_k(X)$ consist of contractions. 
Moreover, $\|U_t-1\|<2$ for any $t\in[0,1]$, so functional calculus yields a norm-continuous path of self-adjoint elements $a\colon[0,\infty)\to\fD_{D}$ such that \[U_t=\exp(ia_t),\ \|a_t\|<\pi,\ \text{and}\ [a_t, D_X]\subseteq\cK(B,\beta(X)),\] for any $t\in[0,\infty)$ and $X\in\cC$. In particular, $a_0=0$ and $a|_{[1,\infty)}$ is constant.

Then, by Lemma \ref{lem:approxunit}, there exists an increasing approximate unit $e_n\in B$ such that
\begin{equation}\label{eq: approxunit1}
e_n\rhd T\to T \  \text{and}\ T\lhd e_n\to T\ \text{for any} \ T\in \cK(B,\beta(X)),\  X\in \cC,   
\end{equation}
\begin{comment}\label{eq: approxunit2}
\lim\limits_{n\to\infty}\|[e_n,\phi_X(\xi)]\|=0\ \text{for any} \ X\in\cC, \ \xi\in\alpha(X).
\end{comment} 
\begin{equation}\label{eq: approxunit3}
\lim\limits_{n\to\infty}\|[\eta,e_n]\|=0 ,
\end{equation}
and
\begin{equation}\label{eq: approxunit4}
\lim\limits_{n\to\infty}\max\limits_{t\in[0,1]}\|[a_t,e_n]\|=0 
\end{equation}
for any $X\in \cC$ and $\eta\in \cF_X$. %Furthermore, picking elements in the convex hull if necessary, we can ensure quasicentrality of the approximate unit $e_n$. In particular, we assume that

Extending linearly, we find an increasing norm-continuous path of positive contractions $e\colon[0,\infty)\to B$ satisfying the same asymptotic conditions as the sequence $e_n$. Therefore, we get that 
\begin{align*}
&\lim\limits_{s\to\infty}\max\limits_{t\in[0,1]}\|[a_t-e_sa_te_s,\eta]\|\\
&\stackrel{\eqref{eq: approxunit3},\eqref{eq: approxunit4}}{=} \lim\limits_{s\to\infty}\max\limits_{t\in[0,1]}\|(1-e_s^2)[a_t,\eta]\|\stackrel{\eqref{eq: approxunit1}}{=}0
\end{align*} for any $X\in \cC$, any $\eta\in D_X$, where the last equality uses that $a_t\in\fD_D$. 
%A similar calculation using \eqref{eq: approxunit1}, \eqref{eq: approxunit3}, \eqref{eq: approxunit4}, and that $[a_t, D_X]\subseteq\cK(B,\beta(X))$ shows that \[\lim\limits_{s\to\infty}\|[\eta,a_t-e_sa_te_s]\|=0\] for any $X\in\cC$, $\eta\in D_X$, and uniformly over $t\in[0,1]$.

Choose a sequence of positive numbers $\delta_n$ as in \cite[Lemma 4.2]{GASZ22}. Reparametrising our paths if necessary, we can assume that the following estimates hold:
\begin{equation}\label{eq: est1}
\max\limits_{t\in[n,n+1]}\|[a_t,e_ta_te_t]\|\leq \delta_n;
\end{equation}

\begin{comment}\label{eq: est2}
\max\limits_{t\in[n,n+1]}\max\limits_{X\in K_n,\xi\in\cG_n(X)}\|[a_t-e_ta_te_t,\phi_X(\xi)]\|\leq\delta_n;   
\end{comment}

\begin{equation}\label{eq: est3}
\max\limits_{t\in[n,n+1]}\max\limits_{X\in K_n,\eta\in\cF_n(X)}\|[a_t-e_ta_te_t,\eta]\|\leq\delta_n.  
\end{equation}

To finish the proof, we define $v_t=\exp(ie_ta_te_t)\in\cU(1+B)$ for any $t\in[0,\infty)$ and show that it satisfies the required conditions. Fix $n\geq 0$, $t\in[n,n+1]$, $X\in K_n$, and $\eta\in\cF_n(X)$. 
%$\supset\phi_X(\cG_n(X))$. 
Then \[\|[v_t^*U_t,\eta]\|\stackrel{\eqref{eq: est1}}{=}_{\!\!2\varepsilon_n/8}\|[\exp(i(a_t-e_ta_te_t)),\eta]\|\stackrel{\eqref{eq: est3}}{=}_{\!\!\varepsilon_n/8}0.\] The last statement follows as $v_t=1$ whenever $a_t=0$.
\end{proof}

\begin{lemma}\label{lemma: ChangeUnitPath2}
Let $A$ be a separable $\cC$-C$^*$-algebra, $B$ be a $\sigma$-unital $\cC$-C$^*$-algebra, and $\phi\colon(A,\alpha,\fu)\rightarrow (B,\beta,\fv)$ be a $\cC$-cocycle representation. Let $U\colon[0,\infty)\to \cU(\fD_\phi)$ be a norm-continuous path such that $U_0=1$. Then there exists a norm-continuous path $v\colon[0,\infty)\to\cU(1+B)$ with $v_0=1$ such that \[\lim\limits_{t\to\infty}\|[v_t^*U_t,\phi_X(\xi)]\|=0\] for any $X\in\cC$ and any $\xi\in\alpha(X)$.
\end{lemma}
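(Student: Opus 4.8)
The proof will follow the strategy of \cite[Lemma 4.3]{GASZ22}, with \cref{lemma: ChangeUnitPath1} and \cref{lem:approxunit} as the $\cC$-equivariant inputs. First I would set up the relevant separable subspaces: for each $X\in\Irr(\cC)$ put $D_X\coloneqq\overline{\spa}\,\phi_X(\alpha(X))\subseteq\cL(B,\beta(X))$, which is separable since $\alpha(X)$ is separable and $\phi_X$ is continuous, and note that $[\fD_\phi, D_X]\subseteq\cK(B,\beta(X))$ because this holds for $\phi_X(\alpha(X))$ by \cref{defn: DPhi} and both sides are norm-closed. Since $\cC$ has countably many simple objects, each $\alpha(X)$ is separable, and (by naturality, \cref{prop:phiXmaps}) control on simple objects gives control on all objects, it suffices to arrange $\lim_{t\to\infty}\|[v_t^{*}U_t,\eta]\|=0$ for $\eta$ ranging over a fixed countable set that is dense in $\bigcup_{X\in\Irr(\cC)} D_X$ (one then approximates a general $\phi_X(\xi)$ using that $\|[v_t^{*}U_t,\cdot]\|$ is $2$-Lipschitz uniformly in $t$). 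Accordingly I fix an exhaustion by finite sets $K_1\subseteq K_2\subseteq\cdots$ with $\bigcup_n K_n=\Irr(\cC)$, finite sets $\cF_n(X)\subseteq D_X$ increasing in $n$ with $\bigcup_n\cF_n(X)$ dense in $D_X$ for $X\in K_n$, and a sequence $\varepsilon_n\downarrow 0$.

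The second step is to tame $U$. Replacing $U$ by $U\circ\rho$ for a suitable increasing homeomorphism $\rho$ of $[0,\infty)$ (which does not affect the conclusion, as one may reparametrise the output $v$ the same way), I may assume $\|U_s-U_{s'}\|<2$ whenever $s,s'$ lie in a common interval $[j-1,j+1]$. I then work on the decomposition $[0,\infty)=\bigcup_{j\geq 0}[j,j+1]$. For each $j$ the local difference path $C^{(j)}_\theta\coloneqq U_j^{*}U_{j+\theta}$ for $\theta\in[0,1]$, extended to be constant equal to $C^{(j)}_1$ on $[1,\infty)$, takes values in $\cU(\fD_\phi)$, starts at $1$, satisfies $\|C^{(j)}_\theta-1\|<2$, and obeys $[C^{(j)}_\theta,D_X]\subseteq\cK(B,\beta(X))$, so \cref{lemma: ChangeUnitPath1} applies to it. I would construct $v$ on $[j,j+1]$ inductively: assuming $v$ has been built on $[0,j]$ with $v_j\in\cU(1+B)$ and with the inductive invariant that $v_j^{*}U_j$ commutes sufficiently well with the stage-$j$ data, I use the path produced by \cref{lemma: ChangeUnitPath1} for $C^{(j)}$ — conjugated by the running unitary $v_j^{*}U_j$ and precomposed, if necessary, with a further small $\cU(1+B)$-correction coming from a quasicentral approximate unit for $\phi$ as in \cref{lem:approxunit} — to define $v$ on $[j,j+1]$ so that it matches the already-constructed value at $t=j$, stays in $\cU(1+B)$, terminates at a value $v_{j+1}$ for which the invariant holds at stage $j+1$, and has $\|[v_t^{*}U_t,\eta]\|$ controlled for $t\in[j,j+1]$ and $\eta$ in the stage-$j$ data. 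Since the $\cF_n(X)$ exhaust $D_X$ and $\varepsilon_n\downarrow 0$, letting $j\to\infty$ yields the desired path, with $v_0=1$ built in from the base case $v_0=1$, $U_0=1$.

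The main obstacle is precisely this gluing: \cref{lemma: ChangeUnitPath1} only handles a single ``bump'', while $U$ may wander through $\cU(\fD_\phi)$ indefinitely (e.g.\ oscillate forever), so the corrections on successive intervals must be assembled into one continuous path without the commutator errors accumulating. The resolution, exactly as in \cite[Lemma 4.3]{GASZ22}, is that the approximate units furnished by \cref{lem:approxunit} can be chosen simultaneously quasicentral for $\phi$ and so as to cut down any prescribed compact subset of $\bigoplus_{X\in K}\cK(B,\beta(X))$; this is what allows the correction on the $j$-th interval to \emph{absorb} the residual error carried over from the earlier intervals — which lies in such a compact, hence ``cuttable'', set — rather than merely adding to it, so that the invariant can be re-established at each integer time. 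Carrying out this bookkeeping, namely tracking which finite sets of compact operators must be cut down at each stage and verifying continuity at the junctions $t=j$, is the technical heart of the argument; the conjugation identities of \cref{lem:usingisometryofstar} (used to pass between $\xi$ and $\overline\xi$) and \cref{lemma:barnotation} handle the only genuinely $\cC$-specific points, and otherwise no new difficulty arises beyond what is already packaged into \cref{lemma: ChangeUnitPath1} and \cref{lem:approxunit}.
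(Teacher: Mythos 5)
Your setup (unit-interval decomposition with $\|U_s-U_{s'}\|<2$ on each piece, bump paths handled by \cref{lemma: ChangeUnitPath1}, exhaustion by finite sets $K_n$ and finite data sets, reduction to a dense family via the uniform $2$-Lipschitz bound on commutators) matches the paper. The gap is in the assembly step, which you correctly identify as the heart of the matter but resolve incorrectly. Your plan freezes $v$ on $[0,j]$ and extends it interval by interval, claiming that the residual error $\|[v_j^*U_j,\eta]\|$ carried over from earlier stages can be \emph{absorbed} by a further small correction in $\cU(1+B)$ coming from a quasicentral approximate unit. This mechanism does not work: for unitaries $u,w$ one has $[uw,\eta]=u[w,\eta]+[u,\eta]w$, so a quasicentral $u$ (i.e.\ one with $[u,\eta]$ small) approximately \emph{preserves} the existing commutator error rather than cancelling it, and no multiplication by a further unitary can decrease the norm of the fixed compact operator $[v_j^*U_j,\eta]$. "Cutting down" a compact set by $1-e_n$ is irrelevant here, since the quantity to be controlled is a commutator norm, not a product with an approximate unit. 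With a frozen prefix, the error at integer times is non-decreasing, so the limit cannot be forced to zero.

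The paper's resolution is structurally different. Each bump correction $v^{(n)}$ is defined on \emph{all} of $[0,\infty)$ (equal to $1$ on $[0,n]$), and \cref{lemma: ChangeUnitPath1} is invoked with tolerances $2^{-(n+k)}$ on the interval $[k,k+1]$ for every $k\geq n$: after its bump, $U^{(n)}$ is constant, but $v^{(n)}$ keeps evolving via the approximate unit so that its commutator error with the relevant data continues to shrink as $t\to\infty$. The total correction is the product $v_t=v_t^{(n)}\cdots v_t^{(0)}$ on $[n,n+1]$, and a telescoping estimate bounds the total error on $[k,k+1]$ by $\sum_{n=0}^{k}2^{-(n+k)}\leq 2^{1-k}\to 0$; the errors do add up, but the sum tends to zero because every earlier correction improves on later intervals. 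Making the telescoping work requires feeding the $n$-th application of \cref{lemma: ChangeUnitPath1} not the raw data $\phi_X(\xi)$ but the conjugated sets $\cG_n^k(X)=\{U_t^{(n-1)}\cdots U_t^{(0)}\rhd\phi_X(\xi)\lhd U_t^{(0)*}\cdots U_t^{(n-1)*}\}$, a point absent from your outline. (Minor further remark: \cref{lem:usingisometryofstar} and \cref{lemma:barnotation} are not needed in this step; the $\cC$-specific conjugate-object bookkeeping is already encapsulated inside \cref{lemma: ChangeUnitPath1}.)
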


\begin{proof}
Since $U$ is uniformly continuous on bounded intervals, there exists an increasing sequence $0=t_0<t_1<t_2<\ldots$ with $t_n\to\infty$ such that 
\begin{equation}\label{eq: CutUnitPath}
\max\limits_{t_n\leq s\leq t_{n+1}}\|U_s-U_{t_n}\|<2,\ n\geq 0.
\end{equation}
Reparametrising the path if necessary, we can assume that $t_n=n$ for any $n\in\mathbb{N}$. We then define a norm-continuous path $U^{(n)}\colon[0,\infty)\to\cU(\fD_\phi)$ by \[  U_t^{(n)}= 
\begin{cases} 
      U_{n+1}U_n^* & t>n+1 \\
      U_tU_n^* & n\leq t\leq n+1 \\
      1 & t<n.
   \end{cases}
\]In particular, using that $U_0=1$, we have that \[U_t=U_t^{(n)}U_t^{(n-1)}\ldots U_t^{(0)}\] for any $n\in\mathbb{N}$ and any $t\leq n+1$.

Let $K_n$ be an increasing sequence of finite sets whose union is $\Irr(\cC)$. For any $X\in \Irr(\cC)$, let $\cG_k(X)\subseteq\alpha(X)$ for $k\geq\min\{ n\,|\, X\in K_n \}$ be an increasing sequence of finite sets of contractions such that the union $\bigcup_{k}\cG_k(X)$ is dense in the unit ball of $\alpha(X)$. Then, for $n\leq k$ and $X\in K_n$, we consider the compact sets \[\cG_n^k(X)=\{U_t^{(n-1)}\ldots U_t^{(0)}\rhd\phi_X(\xi)\lhd U_t^{(0)*}\ldots U_t^{(n-1)*}\colon \xi\in \cG_k(X),\ t\leq k+1\}.\] 

We now apply Lemma \ref{lemma: ChangeUnitPath1} for every $n\in\mathbb{N}$ to the path $U_{t+n}^{(n)}$ for $t\in[0,\infty)$. Note that $U^{(n)}_n=1$, while \eqref{eq: CutUnitPath} shows that $\|U_t^{(n)}-1\|<2$ for any $t\in[n,n+1]$. Furthermore, as $U_t^{(m)}\in\fD_\phi$ for any $m\geq 0$, we have that \[U_t^{(n)}\rhd\eta\lhd U_t^{(n)*}-\eta\in\cK(B,\beta(X))\] for any $k\geq n$, $X\in K_k$, $t\geq 0$, and $\eta\in\cG_n^k$. Thus, Lemma \ref{lemma: ChangeUnitPath1} yields a norm-continuous path $v^{(n)} \colon [0, \infty) \to \cU(1 + B)$ such that 
%\begin{equation}\label{eq: UnitPath1}
%\|[v_t^{(n)*}U_t^{(n)},\phi_X(\xi)]\|\leq 2^{-(n+k)}    
%\end{equation} and
\begin{equation}\label{eq: UnitPath2}
\|[v_t^{(n)*}U_t^{(n)},\eta]\|\leq 2^{-(n+k)}    
\end{equation}
for every $k\geq n$, any $t\in[k,k+1], X\in K_k$, $\xi\in \cG_k(X)$, and $\eta\in\cG_n^k(X)$. Since $U_t^{(n)}=1$ for any $t\leq n$, Lemma \ref{lemma: ChangeUnitPath1} also gives that $v^{(n)}|_{[0,n]}=1$.

Using the family of paths $v^{(n)}$, we can now define a norm-continuous path $v\colon[0,\infty)\to\cU(1+B)$ by \[v_t=v_t^{(n)}v_t^{(n-1)}\ldots v_t^{(0)}, \quad n\leq t\leq n+1.\] We prove that the path $v$ satisfies the required properties. First note that $v_0=v_0^{(0)}=1$. Then, for any $k\geq 0$, $t\in[k,k+1]$, $X\in K_k$, and any $\xi\in\cG_k(X)$, we have that 
\begin{align*}
&\|[v_t^*U_t,\phi_X(\xi)]\| \\
={}& \|v_t^{(k)*}U_t^{(k)}\ldots U_t^{(0)}\rhd \phi_X(\xi)\lhd U_t^{(0)*}\ldots U_t^{(k)*}v_t^{(k)}\\
-{}& v_t^{(k-1)}\ldots v_t^{(0)}\rhd \phi_X(\xi)\lhd v_t^{(0)*}\ldots v_t^{(k-1)*}\\
+{}& U_t^{(k-1)}\ldots U_t^{(0)}\rhd\phi_X(\xi)\lhd U_t^{(0)*}\ldots U_t^{(k-1)*}\\
-{}& U_t^{(k-1)}\ldots U_t^{(0)}\rhd\phi_X(\xi)\lhd U_t^{(0)*}\ldots U_t^{(k-1)*}\|\\
\leq{}& 
\|[v_t^{(k)*}U_t^{(k)},U_t^{(k-1)}\ldots U_t^{(0)}\rhd \phi_X(\xi)\lhd U_t^{(0)*}\ldots U_t^{(k-1)*}]\|\\
+&\|[v_t^{(0)*}\ldots v_t^{(k-1)*}U_t^{(k-1)}\ldots U_t^{(0)},\phi_X(\xi)]\|\\
\stackrel{\eqref{eq: UnitPath2}}{\leq}& 
2^{-2k}+\|[v_t^{(0)*}\ldots v_t^{(k-1)*}U_t^{(k-1)}\ldots U_t^{(0)},\phi_X(\xi)]\|.
\end{align*} 
Repeating this estimation $k-1$ more times, we obtain that \[\|[v_t^*U_t,\phi_X(\xi)]\|\leq \sum\limits_{i=k}^{2k}2^{-i}\leq 2^{1-k},\] so the conclusion follows.
\end{proof}

We can now obtain $\cC$-strong asymptotic unitary equivalence from $\cC$-operator homotopy.

\begin{theorem}\label{thm: OpHomToAsympUnitEq}
Let $(A,\alpha,\fu)$ be a separable $\cC$-C$^*$-algebra, $(B,\beta,\fv)$ be a $\sigma$-unital $\cC$-C$^*$-algebra, and $\phi,\psi\colon(A,\alpha,\fu)\rightarrow (B,\beta,\fv)$ be $\cC$-cocycle representations. If $\phi$ and $\psi$ are $\cC$-operator homotopic then they are $\cC$-strongly asymptotically unitarily equivalent.
\end{theorem}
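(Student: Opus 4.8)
Here is my plan for proving Theorem \ref{thm: OpHomToAsympUnitEq}.

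\textbf{Setup and strategy.} The statement says: if $\phi$ and $\psi$ are $\cC$-operator homotopic, then they are $\cC$-strongly asymptotically unitarily equivalent, i.e. there is a norm-continuous path $u\colon[0,\infty)\to\cU(1+B)$ with $u_0=1$ and $\psi_X(\xi)=\lim_{t\to\infty}u_t\rhd\phi_X(\xi)\lhd u_t^*$ for all $X\in\cC$, $\xi\in\alpha(X)$. By definition of $\cC$-operator homotopy, there is a norm-continuous path $U\colon[0,1]\to\cU(\fD_\phi)$ with $U_0=1$ and $\Ad(U_1)\circ\phi=\psi$. Extending $U$ to be constant equal to $U_1$ on $[1,\infty)$, we get a norm-continuous path $U\colon[0,\infty)\to\cU(\fD_\phi)$ with $U_0=1$ such that $\Ad(U_t)\circ\phi=\psi$ for all $t\geq1$. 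The plan is to apply Lemma \ref{lemma: ChangeUnitPath2} directly to this path $U$.

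\textbf{Main argument.} Lemma \ref{lemma: ChangeUnitPath2} provides a norm-continuous path $v\colon[0,\infty)\to\cU(1+B)$ with $v_0=1$ such that $\lim_{t\to\infty}\|[v_t^*U_t,\phi_X(\xi)]\|=0$ for all $X\in\cC$ and $\xi\in\alpha(X)$. Set $u_t\coloneqq v_t$; then $u_0=1$ and $u_t\in\cU(1+B)$ as required. I claim $\psi_X(\xi)=\lim_{t\to\infty}u_t\rhd\phi_X(\xi)\lhd u_t^*$. For $t\geq1$, since $\Ad(U_t)\circ\phi=\psi$, we have $\psi_X(\xi)=U_t\rhd\phi_X(\xi)\lhd U_t^*$, so
\[
\|\psi_X(\xi)-u_t\rhd\phi_X(\xi)\lhd u_t^*\| = \|U_t\rhd\phi_X(\xi)\lhd U_t^* - v_t\rhd\phi_X(\xi)\lhd v_t^*\|.
\]
Conjugating inside by the unitary $v_t^*$ (which preserves the norm of the difference when also conjugating the other term, using that $\rhd$ and $\lhd$ are by unitaries), the right-hand side equals $\|v_t^*U_t\rhd\phi_X(\xi)\lhd U_t^*v_t - \phi_X(\xi)\| = \|[v_t^*U_t,\phi_X(\xi)]\|$, where the last equality uses that $v_t^*U_t$ is a unitary so that $w\rhd\eta\lhd w^* - \eta = (w\rhd\eta - \eta\lhd w)\lhd w^* = [w,\eta]\lhd w^*$ has the same norm as $[w,\eta]$ (here $w=v_t^*U_t$, $\eta=\phi_X(\xi)$, using that right multiplication by the unitary $w^*$ is isometric on $\cL(B,\beta(X))$). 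By Lemma \ref{lemma: ChangeUnitPath2} this tends to $0$ as $t\to\infty$, giving the claim.

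\textbf{Expected obstacle.} The genuine content has been front-loaded into Lemmas \ref{lemma: ChangeUnitPath1} and \ref{lemma: ChangeUnitPath2}, so this theorem is essentially a bookkeeping exercise. The only point requiring a little care is the norm manipulation turning $\|U_t\rhd\phi_X(\xi)\lhd U_t^* - v_t\rhd\phi_X(\xi)\lhd v_t^*\|$ into a commutator norm; this relies on the fact that the left action $\rhd$ of $\cU(\cM(B))$ (via $l_{\beta(X)}$) and the right action $\lhd$ of $\cU(\cM(B))$ on the Hilbert $\cM(B)$-bimodule $\cL(B,\beta(X))$ are implemented by unitaries and hence isometric, together with the identity $\Ad(w)(\eta)-\eta=[w,\eta]\lhd w^*$ valid for $w$ unitary. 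One should also note that since $\phi$ and $\psi$ agree outside $\cK(B,\beta(X))$ up to $\Ad(U_1)$, the path $u$ automatically satisfies no compactness constraint is needed here—the statement of $\cC$-strong asymptotic unitary equivalence (Definition \ref{defn:properasympunitaryeq}) does not require condition \ref{item:Dphiu} of Definition \ref{defn:asympunitaryeq}, so we are done once the limit is verified.
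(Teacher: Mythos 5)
Your argument is correct and is essentially identical to the paper's own proof: extend $U$ to $[0,\infty)$, apply Lemma \ref{lemma: ChangeUnitPath2} to produce $v$, and observe that $\psi_X(\xi)-v_t\rhd\phi_X(\xi)\lhd v_t^*$ has norm $\|[v_t^*U_t,\phi_X(\xi)]\|$, which tends to zero. The paper records the same computation in the compact form $\psi_X(\xi)=\lim_t U_t\rhd\phi_X(\xi)\lhd U_t^*=\lim_t v_t(v_t^*U_t)\rhd\phi_X(\xi)\lhd (v_t^*U_t)^*v_t^*=\lim_t v_t\rhd\phi_X(\xi)\lhd v_t^*$.
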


\begin{proof}
By definition, there exists a norm-continuous path $U\colon[0,\infty)\to \cU(\fD_\phi)$ such that $U_0=1$, $U|_{[1,\infty)}$ is constant, and \[U_1\rhd\phi_X(\xi)\lhd U_1^*=\psi_X(\xi)\] for any $X\in\cC$ and $\xi\in\alpha(X)$. Then, we can apply Lemma \ref{lemma: ChangeUnitPath2} to find a norm-continuous path $v\colon[0,\infty)\to\cU(1+B)$ with $v_0=1$ such that \[\lim\limits_{t\to\infty}\|[v_t^*U_t,\phi_X(\xi)]\|=0\] for any $X\in\cC$ and any $\xi\in\alpha(X)$. Thus, for any $X\in\cC$ and $\xi\in\alpha(X)$, we have that 
\begin{align*}
\psi_X(\xi)&=\lim\limits_{t\to\infty}U_t\rhd\phi_X(\xi)\lhd U_t^*\\&=\lim\limits_{t\to\infty}v_t(v_t^*U_t)\rhd\phi_X(\xi)\lhd (v_t^*U_t)^*v_t^*\\&=\lim\limits_{t\to\infty}v_t\rhd\phi_X(\xi)\lhd v_t^*,
\end{align*} which finishes the proof.
\end{proof}

\section{The \texorpdfstring{$\cC$}{C}-equivariant stable uniqueness theorem}
The lemma below follows exactly as in \cite[Proposition 1.13]{GASZ22}.

\begin{lemma}\label{lem:unitisation}
    Let $\phi,\psi\colon (A,\alpha,\fu)\rightarrow (B^\rs,\beta^\rs,\fv^\rs)$ be $\cC$-cocycle representations such that $(\phi,\psi)$ forms a $\cC$-Cuntz pair. If $u\in \cU(1+B^\rs)$, then the $\cC$-Cuntz pair $(\phi,\psi)$ is homotopic to $(\Ad(u)\phi,\psi)$.
\end{lemma}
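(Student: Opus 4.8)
The plan is to mimic the classical argument that a Cuntz pair is homotopic to its unitary conjugate when the unitary lies in the minimal unitisation, using a Whitehead-lemma style rotation trick to connect $u$ to the identity inside $\cU(1+M_2(B^\rs))$ while keeping track of $\cC$-equivariance. First I would fix $u\in\cU(1+B^\rs)$ and note that the element $\mathrm{diag}(u,u^*)\in\cU(1+M_2(B^\rs))$ is connected to $\mathrm{diag}(1,1)$ by the standard norm-continuous rotation path $w_s=\left(\begin{smallmatrix}\cos(\pi s/2)&\sin(\pi s/2)\\-\sin(\pi s/2)&\cos(\pi s/2)\end{smallmatrix}\right)\left(\begin{smallmatrix}u&0\\0&1\end{smallmatrix}\right)\left(\begin{smallmatrix}\cos(\pi s/2)&-\sin(\pi s/2)\\\sin(\pi s/2)&\cos(\pi s/2)\end{smallmatrix}\right)\left(\begin{smallmatrix}1&0\\0&u^*\end{smallmatrix}\right)$ for $s\in[0,1]$, which satisfies $w_0=\mathrm{diag}(u,u^*)$, $w_1=\mathrm{diag}(1,1)$, and $w_s-\mathrm{diag}(1,1)\in M_2(B^\rs)$ throughout. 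Since $B^\rs=B\otimes\bK$ is stable, I can absorb the matrix amplification: choose isometries $s_1,s_2\in\cM(B^\rs)$ generating a copy of $\cO_2$ and transport everything back through $\Ad(s_1,s_2)$, so that $v_s:=s_1 w_s^{(11)} s_1^*+s_1 w_s^{(12)}s_2^*+s_2 w_s^{(21)}s_1^*+s_2 w_s^{(22)}s_2^*$ gives a norm-continuous path in $\cU(1+B^\rs)$ from $s_1 u s_1^*+s_2 u^* s_2^*$ to $1$.

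Next I would build the homotopy of $\cC$-Cuntz pairs. Working in the $C[0,1]$-picture, consider the $\cC$-cocycle representations into $(B^\rs\otimes C[0,1],\beta^\rs\otimes\id,\fv^\rs\otimes1)$ given at parameter $s$ by $\Phi_s:=\Ad(v_s)\circ(\phi\oplus_{s_1,s_2}\psi)$ and $\Psi_s:=\phi\oplus_{s_1,s_2}\psi$, where on the second summand the conjugating unitary acts trivially up to the rotation bookkeeping — more precisely, I would arrange the path so that at $s=1$ one recovers $(\Ad(u)\phi\oplus_{s_1,s_2}\psi,\phi\oplus_{s_1,s_2}\psi)$ and at $s=0$ one recovers (a conjugate of) $(\phi\oplus_{s_1,s_2}\psi,\phi\oplus_{s_1,s_2}\psi)$. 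The point is that $\Phi_s-\Psi_s$ has entries that are sums of terms $v_s\rhd(\phi_X(\xi)-\psi_X(\xi))\lhd v_s^*$ plus commutator-type corrections $[v_s,\text{(stuff)}]\lhd(\cdots)$; the first kind lies in $\cK(B^\rs,\beta^\rs(X))$ because $(\phi,\psi)$ is a $\cC$-Cuntz pair, and the second kind lies there because $v_s-1\in B^\rs$ and $B^\rs=\cK(B^\rs)$ acts by compacts on the left of $\cL(B^\rs,\beta^\rs(X))$ (using that each $\beta^\rs(X)$ is proper, via Lemma \ref{lem:compactstensoridentity} and the identifications after Proposition \ref{prop:phiXmaps}). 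Hence $(\Phi,\Psi)\in\bE^{\cC}((\alpha,\fu),(\beta^\rs\otimes\id_{C[0,1]},\fv^\rs\otimes1))$ is a genuine homotopy, and evaluating at the endpoints gives $(\phi,\psi)\simeq(\Ad(u)\phi,\psi)$, using that Cuntz sum with the "diagonal" degenerate correction does not change the class by Lemma \ref{lemma: CuntzSumGroup}\ref{item: DegHom0} and that $\oplus$-ing with a fixed $\cC$-cocycle representation is compatible with $\simeq$.

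The main obstacle I expect is the bookkeeping needed to ensure that the rotation path genuinely produces a homotopy \emph{of $\cC$-Cuntz pairs} rather than merely a homotopy of the underlying $^*$-homomorphisms: one must verify that at every parameter $s$ the conjugated family $\{\Ad(v_s)\circ\phi_X\}_X$ still satisfies the coherence condition \ref{item:coherence} of Proposition \ref{prop:phiXmaps} with respect to $\fv^\rs$ — which it does automatically because $\Ad(v_s)$ is itself a $\cC$-cocycle representation by Example \ref{example: cocyclerep}(1) composed with $\phi$ — and, more delicately, that the difference $\Phi_{s,X}(\xi)-\Psi_{s,X}(\xi)$ is compact \emph{uniformly enough} in $s$ to live in $\cK(B^\rs\otimes C[0,1],\beta^\rs(X)\otimes C[0,1])$, i.e.\ as an operator over the coefficient algebra $C[0,1]$. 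This last point is handled exactly as in \cite[Proposition 1.13]{GASZ22}: the $s$-dependence is norm-continuous and the compact-valued corrections assemble into an element of the $C[0,1]$-valued compacts because $\cK(-\otimes C[0,1])\cong \cK(-)\otimes C[0,1]$ for the relevant modules. Once this is in place the argument is a routine transcription, so I would keep the exposition brief and simply point to \cite[Proposition 1.13]{GASZ22} for the details, noting only the two places where duality/Lemma \ref{lem:compactstensoridentity} replaces the group-theoretic input.
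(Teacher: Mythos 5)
Your overall strategy is the right one and is essentially the argument of \cite[Proposition 1.13]{GASZ22} that the paper invokes: the Whitehead rotation connecting $\mathrm{diag}(u,u^*)$ to $1$ inside $\cU(1+M_2(B^\rs))$, its transport into $\cU(1+B^\rs)$ via $\cO_2$-isometries, and the observation that conjugation by a unitary in $1+B^\rs$ perturbs each $\phi_X(\xi)$ only by elements of $\cK(B^\rs,\beta^\rs(X))$ (using that $\beta^\rs(X)$ is proper, so left multiplication by $v_s-1\in B^\rs$ is compact, and that the $s$-dependence is norm-continuous, so everything assembles over $C[0,1]$). Those ingredients are all correct. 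The gap is in the homotopy you actually display. With $v_0=s_1us_1^*+s_2u^*s_2^*$ and $v_1=1$, the pair $\bigl(\Ad(v_s)\circ(\phi\oplus_{s_1,s_2}\psi),\,\phi\oplus_{s_1,s_2}\psi\bigr)$ has endpoints $\bigl(\Ad(u)\phi\oplus_{s_1,s_2}\Ad(u^*)\psi,\,\phi\oplus_{s_1,s_2}\psi\bigr)$ at $s=0$ and the degenerate pair at $s=1$; the second summand gets conjugated by $u^*$, so you never see $\Ad(u)\phi\oplus_{s_1,s_2}\psi$. What this homotopy proves is $[\Ad(u)\phi,\phi]+[\Ad(u^*)\psi,\psi]=0$, which is not the lemma, and extracting the lemma from it requires further identities (such as $[\Phi,\Theta]+[\Theta,\Psi]=[\Phi,\Psi]$ and $[\Psi,\Phi]=-[\Phi,\Psi]$) whose proofs are themselves rotation arguments of exactly the kind at issue. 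The sentence ``I would arrange the path so that\ldots'' is precisely the step that is missing, and it is the crux of the proof.

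The fix is to conjugate $\phi\oplus_{s_1,s_2}0$ rather than $\phi\oplus_{s_1,s_2}\psi$, so that the $u^*$ sitting in the second diagonal entry of $w_s$ acts on the zero summand and is invisible: since $v_0s_1=s_1u$, the path $s\mapsto\bigl(\Ad(v_s)\circ(\phi\oplus_{s_1,s_2}0),\,\psi\oplus_{s_1,s_2}0\bigr)$ assembles (by your compactness argument, which goes through verbatim) into an element of $\bE^{\cC}((\alpha,\fu),(\beta^\rs\otimes\id_{C[0,1]},\fv^\rs\otimes1))$ with endpoints $(\Ad(u)\phi\oplus_{s_1,s_2}0,\,\psi\oplus_{s_1,s_2}0)$ and $(\phi\oplus_{s_1,s_2}0,\,\psi\oplus_{s_1,s_2}0)$. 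Stripping the zero summands is then legitimate: either note that $(\Theta\oplus_{s_1,s_2}0,\Omega\oplus_{s_1,s_2}0)=(\Theta,\Omega)\oplus_{s_1,s_2}(0,0)$ and apply \cref{lemma: CuntzSumGroup}, or connect $1$ to $s_1$ by a strictly continuous path of isometries in $\cM(B^\rs)$ (available since $B^\rs$ is stable) and conjugate both coordinates simultaneously, each stage remaining a $\cC$-Cuntz pair because $\Ad$ of an isometry preserves compact differences. With that single change of which representation gets rotated, the rest of your write-up is sound.
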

We can now state and prove the $\cC$-equivariant version of the stable uniqueness theorem (cf. \cite[Theorem 3.8]{DADEIL01} and \cite[Theorem 5.4]{GASZ22}).

\begin{theorem}\label{thm: StableUniq}
Let $\cC$ be a unitary tensor category with countably many isomorphism classes of simple objects, $A$ be a separable $\cC$-C$^*$-algebra, and $B$ be a $\sigma$-unital $\cC$-C$^*$-algebra. Let \[\phi,\psi\colon(A,\alpha,\fu)\to (B^\rs,\beta^\rs,\fv^\rs)\] be two $\cC$-cocycle representations that form a $\cC$-Cuntz pair. Then the following are equivalent:
\begin{enumerate}[label=\textit{(\roman*)}]
\item $[\phi,\psi]=0$ in $\KK^{\cC}((\alpha,\fu),(\beta,\fv))$;\label{SU1} 

\item $\phi$ and $\psi$ are stably $\cC$-operator homotopic;\label{SU2}

\item $\phi$ and $\psi$ are stably $\cC$-strongly asymptotically unitarily equivalent;\label{SU3}

\item $\phi$ and $\psi$ are stably $\cC$-properly asymptotically unitarily equivalent.\label{SU4}
\end{enumerate}
If $B$ is further assumed to be separable, then these statements are also equivalent to the following condition.

\begin{enumerate}[label=\textit{(\roman*)},resume]
\item $\phi\oplus\theta$ is $\cC$-strongly asympotically unitarily equivalent to $\psi\oplus\theta$ for every absorbing $\cC$-cocycle representation $\theta\colon (A,\alpha,\fu) \to (B^\rs,\beta^\rs,\fv^\rs)$.\label{SU5}
\end{enumerate}
\end{theorem}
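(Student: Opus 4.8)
The plan is to prove the chain of implications $\ref{SU1}\Rightarrow\ref{SU2}\Rightarrow\ref{SU3}\Rightarrow\ref{SU4}\Rightarrow\ref{SU1}$, and then, under the extra separability assumption, to close the loop through $\ref{SU5}$. Most of the heavy lifting has already been done in the preceding sections, so the proof is essentially a matter of assembling the pieces. For $\ref{SU1}\Rightarrow\ref{SU2}$ I would invoke \cref{thm:KKCtostableoh} directly: if $[\phi,\psi]=0$ in $\KK^\cC$ then $\phi$ and $\psi$ are stably $\cC$-operator homotopic. For $\ref{SU2}\Rightarrow\ref{SU3}$ I would apply \cref{thm: OpHomToAsympUnitEq} to the $\cC$-cocycle representations $\phi\oplus\kappa$ and $\psi\oplus\kappa$, where $\kappa$ is the stabilising $\cC$-cocycle representation witnessing the stable $\cC$-operator homotopy; here one uses that $B^\rs$ is $\sigma$-unital (since $B$ is $\sigma$-unital) so the hypotheses of \cref{thm: OpHomToAsympUnitEq} are met. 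The implication $\ref{SU3}\Rightarrow\ref{SU4}$ is immediate from the definitions in \cref{defn:properasympunitaryeq}, since a $\cC$-strong asymptotic unitary equivalence is in particular a $\cC$-proper one.

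The implication $\ref{SU4}\Rightarrow\ref{SU1}$ is where one must argue more carefully, and I expect this to be the main obstacle. Suppose $\phi\oplus\theta$ is $\cC$-properly asymptotically unitarily equivalent to $\psi\oplus\theta$ via a norm-continuous path $u\colon[0,\infty)\to\cU(1+B^\rs)$; after reparametrising we may as well index by $[1,\infty)$. Since $(\phi,\psi)$ is a $\cC$-Cuntz pair, so is $(\phi\oplus\theta,\psi\oplus\theta)$, and I would first argue that $[\phi,\psi]=[\phi\oplus\theta,\psi\oplus\theta]$ in $\bE^\cC/{\simeq}$ using \cref{lemma: CuntzSumGroup}\ref{item: DegHom0} and the group structure (the degenerate pair $(\theta,\theta)$ is trivial). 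Next, the point is that the $\cC$-proper asymptotic unitary equivalence realised by $u_t\in\cU(1+B^\rs)$ lets us replace $\psi\oplus\theta$ by $\Ad(u_1)\circ(\phi\oplus\theta)$ up to homotopy of $\cC$-Cuntz pairs: this is exactly \cref{lem:unitisation}, since $u_1\in\cU(1+B^\rs)$. So it remains to see that $(\Ad(u_1)\circ(\phi\oplus\theta),\psi\oplus\theta)$ represents $0$. For this I would build the homotopy of $\cC$-Cuntz pairs over $C[1,\infty]$ (one-point compactification) whose section at $t\in[1,\infty)$ is $(\Ad(u_t)\circ(\phi\oplus\theta),\psi\oplus\theta)$ and whose section at $\infty$ is the degenerate pair $(\psi\oplus\theta,\psi\oplus\theta)$; the fibrewise compactness of the differences $\Ad(u_t)\circ(\phi\oplus\theta)_X(\xi)-(\psi\oplus\theta)_X(\xi)$ follows because $u_t-1\in B^\rs$ and the original pair is a $\cC$-Cuntz pair, while continuity up to $t=\infty$ is precisely condition \ref{item:conv} in \cref{defn:asympunitaryeq}. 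One must check that this data genuinely lands in $\bE^\cC((\alpha,\fu),(\beta\otimes\id_{C[1,\infty]},\fv\otimes 1))$ — i.e. that the pointwise-in-$t$ families assemble into a $\cC$-cocycle representation over the interval — but this is a routine verification using \cref{prop:phiXmaps}, exactly as in the group-equivariant case.

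Finally, for the equivalence with \ref{SU5} under the additional hypothesis that $B$ is separable, I would first note $\ref{SU5}\Rightarrow\ref{SU3}$ is trivial (an absorbing $\theta$ is in particular \emph{some} $\cC$-cocycle representation, so \ref{SU5} is a special case of the existential statement \ref{SU3}, once we recall from \cref{thm:absorbingexists} that an absorbing $\cC$-cocycle representation from $(A,\alpha,\fu)$ to $(B^\rs,\beta^\rs,\fv^\rs)$ exists). For the converse $\ref{SU1}\Rightarrow\ref{SU5}$: fix an absorbing $\theta$. By \cref{cor:KKthroughabsorbing} we may write $[\phi,\psi]$ — which equals $0$ — in the form $[\phi',\theta]$ for an absorbing $\cC$-cocycle representation $\phi'$, and in fact the same corollary shows that, after adding $\theta$ and conjugating by a unitary in $\cU(\cM(B^\rs))$ (which is norm-path-connected to $1$), both $\phi\oplus\theta$ and $\psi\oplus\theta$ are absorbing; since $[\phi\oplus\theta,\psi\oplus\theta]=0$ and both are absorbing, \cref{rmk:uniquenessabsorbing} gives $\phi\oplus\theta\sim_\cC\psi\oplus\theta$, hence they are $\cC$-operator homotopic up to a further stabilisation that can be absorbed into $\theta$, and \cref{thm: OpHomToAsympUnitEq} upgrades this to $\cC$-strong asymptotic unitary equivalence of $\phi\oplus\theta$ and $\psi\oplus\theta$. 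The delicate point here — and the second place where care is needed — is tracking that the stabilising representation can always be taken to be (a copy inside) the fixed absorbing $\theta$, i.e. $\theta\oplus\theta\cong\theta$ up to $\sim_\cC$, which is exactly the absorption property applied to $\theta$ itself together with \cref{rmk:uniquenessabsorbing}.
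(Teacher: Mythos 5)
Your chain \ref{SU1}$\Rightarrow$\ref{SU2}$\Rightarrow$\ref{SU3}$\Rightarrow$\ref{SU4}$\Rightarrow$\ref{SU1} is correct and is essentially the paper's argument: the first two implications are direct citations, the third is definitional, and your treatment of \ref{SU4}$\Rightarrow$\ref{SU1} (reduce to $(\Ad(u_1)\circ(\phi\oplus\theta),\psi\oplus\theta)$ via \cref{lem:unitisation}, then contract along the unitary path, using that $u_t\in\cU(1+B^\rs)\subset\fD_{\phi\oplus\theta}$ to keep the Cuntz-pair condition fibrewise) is the same homotopy the paper writes down, merely parametrised over $[1,\infty]$ instead of $[0,1]$.

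The gap is in your argument for \ref{SU5}. You deduce from \cref{rmk:uniquenessabsorbing} that $\phi\oplus\theta\sim_{\cC}\psi\oplus\theta$ because both are absorbing, and then treat this as (essentially) the conclusion. But $\sim_{\cC}$ is the relation of \cref{defn:asympunitaryeq}, implemented by a path in $\cU(\cM(B^\rs))$, whereas \ref{SU5} demands $\cC$-\emph{strong} asymptotic unitary equivalence in the sense of \cref{defn:properasympunitaryeq}, implemented by a path in $\cU(1+B^\rs)$ with $u_0=1$. These are very different: your argument never uses the hypothesis $[\phi,\psi]=0$ — any two absorbing representations satisfy $\phi\oplus\theta\sim_{\cC}\psi\oplus\theta$ unconditionally — so if it were valid it would prove that every Cuntz pair is trivial in $\KK^\cC$ after an absorbing stabilisation, which is false. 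The passage from multiplier-algebra unitaries to unitisation unitaries is exactly the obstruction measured by the $\KK^\cC$-class, and it is the whole content of Sections \ref{sec:operatorhom}--\ref{sect: AsymptEquiv}. Relatedly, your intermediate step ``hence they are $\cC$-operator homotopic up to a further stabilisation'' does not follow from $\sim_{\cC}$ either: an operator homotopy requires \emph{exact} equality $\Ad(u_1)\circ\phi=\psi$ at the endpoint, which asymptotic convergence does not provide. The correct route (the paper's) is: \ref{SU5}$\Rightarrow$\ref{SU3} is immediate once \cref{thm:absorbingexists} supplies an absorbing $\theta$; for \ref{SU3}$\Rightarrow$\ref{SU5}, take the witness $\kappa$ from \ref{SU3}, observe that $\phi\oplus\kappa\oplus\theta$ and $\psi\oplus\kappa\oplus\theta$ are $\cC$-strongly asymptotically unitarily equivalent (via $u_t\oplus 1$), and then use $\kappa\oplus\theta\sim_{\cC}\theta$ together with a transfer computation in the spirit of \cite[Lemma 5.3]{GASZ22} to conjugate the $(1+B^\rs)$-valued path down to one witnessing the strong equivalence of $\phi\oplus\theta$ and $\psi\oplus\theta$; the point is that conjugating an element of $\cU(1+B^\rs)$ by a multiplier unitary stays in $\cU(1+B^\rs)$, so the unitisation constraint is preserved.
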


\begin{proof}
The fact that \ref{SU1} implies \ref{SU2} is the content of Theorem \ref{thm:KKCtostableoh}. Then the implication \ref{SU2}$\implies$\ref{SU3} is a direct consequence of Theorem \ref{thm: OpHomToAsympUnitEq}, while \ref{SU3} implies \ref{SU4} by definition. We now show the implication \ref{SU4}$\implies$\ref{SU1}. Let $\theta\in\Hom^\cC((\alpha,\fu),(\beta,\fv))$ and $w_t\colon[0,\infty)\rightarrow \cU(1+B^\rs)$ be a norm continuous unitary path such that $\Ad(w_t)(\phi_X(\xi)\oplus\theta_X(\xi))\rightarrow \psi_X(\xi)\oplus\theta_X(\xi)$ for all $X\in \cC$ and $\xi\in \alpha(X)$ as $t\rightarrow \infty$. Set
\[\varphi^{(t)}_X(\xi)=\begin{cases}
    \Ad(w_{1/t})(\phi_X(\xi)\oplus\theta_X(\xi)), &t \in (0,1],\\
    \psi_X(\xi)\oplus\theta_X(\xi), &t=0.
\end{cases}\] 
for all $X\in \cC$ and $\xi\in \alpha(X)$. Then, as each $w_t\in \cU(1+B^{s})\subset \fD_{\phi\oplus\theta}$, $t\in [0,1]\mapsto (\varphi^{(t)},\psi\oplus\theta)$ forms a homotopy from a degenerate $\cC$-Cuntz pair to $(\Ad(w_1)(\phi\oplus\theta),\psi\oplus\theta)$. Now, by Lemma \ref{lem:unitisation} the Cuntz-pair $(\Ad(w_1)(\phi\oplus\theta),\psi\oplus\theta)$ is homotopic to the pair $(\phi\oplus\theta,\psi\oplus\theta)$ and so combining these two homotopies $[\phi,\psi]=0$.

\par We now assume that $B$ is separable and show that \ref{SU3} and \ref{SU5} are equivalent. By Theorem \ref{thm:absorbingexists} there exists an absorbing $\cC$-cocycle representation $\theta\colon (A,\alpha,\fu) \to (B^\rs,\beta^\rs,\fv^\rs)$, so \ref{SU5} implies \ref{SU3} by definition. To finish the proof, let $\kappa\colon(A,\alpha,\fu)\to(B^\rs,\beta^\rs,\fv^\rs)$ be a $\cC$-cocycle representation such that $\phi\oplus\kappa$ is strongly asymptotically unitarily equivalent to $\psi\oplus\kappa$. Therefore, $\phi\oplus\kappa\oplus\theta$ is strongly asymptotically unitarily equivalent to $\psi\oplus\kappa\oplus\theta$. Since $\theta$ is absorbing, we have that $\kappa\oplus\theta\sim_{\cC}\theta$, so the conclusion follows using a similar calculation to \cite[Lemma 5.3]{GASZ22}. 
\end{proof}

\bibliographystyle{abbrv}
\bibliography{KK}
\end{document}